\newtheorem{theorem}{Theorem}[section]
\newtheorem{proposition}[theorem]{Proposition}
\newtheorem{lemma}[theorem]{Lemma}
\newtheorem{remark}[theorem]{Remark}
\newtheorem{corollary}[theorem]{Corollary}
\newtheorem{definition}[theorem]{Definition}
\numberwithin{equation}{section}
\newcommand{\beq}{\begin{equation}}
\newcommand{\eeq}{\end{equation}}
\newcommand{\e}{\varepsilon}
\newcommand{\x}{\bm{x}}
\newcommand{\y}{\bm{y}}
\newcommand{\R}{\mathbb{R}}
\newcommand{\la}{\lambda}
\newcommand{\HH}{\mathcal{H}}
\newcommand{\E}{\mathcal{G}}
\newcommand{\dx}{\,d\x}
\newcommand{\dt}{\,dt}
\newcommand{\ds}{\,ds}
\newcommand{\0}{\bm{0}}
\newcommand{\z}{\bm{z}}
\newcommand{\I}{\mathcal{I}_{\Omega}} %NOTE THAT "I" IS "i" and not small "L".
\newcommand{\LL}{\mathcal{L}^n}
\newcommand{\Iloc}{\mathcal{I}_{\Omega}^{\delta,E_0}}
\newcommand{\Idelta}{\mathcal{I}^\delta}
\title{Slow motion for the nonlocal Allen--Cahn equation in $n$-dimensions}
\author{Ryan Murray\\
	Carnegie Mellon University \\
	Pittsburgh, PA, USA
	\and Matteo Rinaldi\\
	Carnegie Mellon University \\
	Pittsburgh, PA, USA}
\begin{document}
\maketitle
\begin{abstract}
The goal of this paper is to study the slow motion of solutions of the nonlocal Allen--Cahn equation in a bounded domain $\Omega \subset \R^n$, for $n > 1$. The initial data is assumed to be close to a configuration whose interface separating the states minimizes the surface area (or perimeter); both local and global perimeter minimizers are taken into account. The evolution of interfaces on a time scale $\e^{-1}$ is deduced, where $\e$ is the interaction length parameter. The key tool is a second-order $\Gamma$--convergence analysis of the energy functional, which provides sharp energy estimates. New regularity results are derived for the isoperimetric function of a domain. Slow motion of solutions for the Cahn--Hilliard equation starting close to global perimeter minimizers is proved as well.
\end{abstract}
\maketitle

\section{Introduction}
In this paper we study slow motion of phase boundaries for the nonlocal Allen--Cahn equation with Neumann boundary conditions, namely,
\beq \label{NLAC}
\begin{cases} \partial_t u_\e = \e^2 \Delta u_\e - W'(u_\e) + \e \lambda_\e &\text{ in } \Omega \times [0,\infty), \\
\  \frac{\partial u_\e}{\partial \nu} = 0\qquad &\text{ on } \partial \Omega \times [0,\infty),\\
\ \ \, u_\e = u_{0,\e} \qquad &\text{ on } \Omega \times \{0\}.
\end{cases}
\eeq
Here $\Omega \subset \R^n$, $1 < n \leq 7$, is an open, bounded, connected set with $\partial \Omega$ regular (see \eqref{dom}), $\e > 0$ is a parameter representing the interaction length, $W: \R \to [0,\infty)$ is a double well-potential with wells at $a<b$, $u_{0,\e}$ is the initial datum, and $\lambda_\e$ is a Lagrange multiplier that renders solutions mass--preserving, to be precise
\[
\lambda_\e = \frac{1}{\e \LL(\Omega)}\int_\Omega W'(u_\e) \dx.
\]

This nonlocal reaction diffusion equation was introduced by Rubinstein and Sternberg \cite{RubinsteinSternberg} to model phase separation after quenching of homogenous binary systems (e.g., glasses or polymers). An important property of this equation is that the total mass $\int_\Omega u_\e(\x,t) \dx$ is preserved in time. It can be shown that when $\e \to 0^+$ the domain $\Omega$ is divided into regions in which $u_\e$ is close to $a$ and to $b$, and that the interfaces between these regions as $\e \to 0^+$ evolve according to a nonlocal volume--preserving mean curvature flow.

The study of the asymptotic slow motion of solutions of the Allen--Cahn equation
\beq \label{ACintro}
\partial_t u_{\e} = \e^2 \Delta u_{\e} - W'(u_{\e})
\eeq
and the Cahn--Hilliard equation
\beq \label{CHintro}
\partial_t u_{\e} = - \Delta ( \e^2 \Delta u_{\e} - W'(u_{\e})  )
\eeq
in dimension $n=1$ was developed in the seminal papers of Carr and Pego \cite{CarrPego2}, \cite{CarrPego1} and Fusco and Hale \cite{FuscoHale}. In particular, Carr and Pego \cite{CarrPego2} studied the slow evolution of solutions of  \eqref{ACintro} when $n=1$, using center manifold theory. They provided a system of differential equations which precisely describes the motion of the position of the transition layers (cf. Section 3 in \cite{CarrPego2}); such a result was formally derived by Neu \cite{Neu}, see also \cite{CarrPego1}. A similar approach has been recently adopted by several authors to extend these ideas to a more general setting, by studying the slow manifolds inherent to the dynamics of these equations, see \cite{OttoRez} and the references therein. 
 
Subsequently, Bronsard and Kohn \cite{BronsardKohn} introduced a new variational method to study the behavior of solutions of the Allen--Cahn equation \eqref{ACintro}. They observed that the motion of solutions of this equation, subject to either Neumann or Dirichlet boundary conditions in an open, bounded interval $\Omega \subset \R$, could be studied by exploiting the gradient flow structure of \eqref{ACintro} (cf.  \eqref{energyIdentity} in Section 4).
The key tool in their paper is a careful analysis of the asymptotic behavior of the energy
\beq \label{EnergyDefOneD}
G_\e[u] := \int_\Omega \frac{1}{\e}W(u) + \frac{\e}{2}|\nabla u|^2 dx , \quad u \in H^1(\Omega).
\eeq
The $L^2$--gradient flow of \eqref{EnergyDefOneD} is precisely \eqref{ACintro}. It is well--known (see, e.g., \cite{ModicaMortola}, \cite{Modica}, \cite{Sternberg}) that  if $\{ v_{\e} \}$ converges in $L^1(\Omega)$ to a function $v \in BV(\Omega; \{ a,b \})$ with exactly $N$ jumps, then
\beq \label{intro5}
\liminf_{\e \to 0} G_{\e}[v_{\e}] \geq Nc_W =: G_0[v],
\eeq
where
\[
c_W := \int_{a}^{b} W^{1/2}(s) \ds.
\]
Bronsard and Kohn improved the lower bound \eqref{intro5} by showing that, for any $k > 0$,
\beq \label{intro6}
G_{\e}[v_{\e}] \geq Nc_W - C_1\e^{k}
\eeq
for $\e$ sufficiently small and some $C_1 > 0$. They then applied \eqref{intro6} to prove that (cf. Theorem 4.1 in \cite{BronsardKohn}) if the initial data $u_{0, \e}$ of the equation \eqref{ACintro} converges in $L^1(\Omega)$ to the  jump function $v$, and $u_{0,\e}$ are energetically ``well--prepared'', that is,
\[
G_{\e}[u_{0,\e}] \leq Nc_W + C_2\e^{k}
\]
for some $C_2 > 0$, then for any $M > 0$,
\beq \label{oneDslow}
\sup_{0 \leq t \leq M\e^{-k}} || u_\e(t) - v||_{L^1} \to 0  \ \text{as} \ \e \to 0^+.
\eeq
Subsequently, Grant \cite{Grant} improved the estimate \eqref{intro6} to
\beq \label{intro7}
G_{\e}[v_{\e}] \geq Nc_W - C_1e^{-C_2 \e^{-1}}
\eeq
for $\e$ small, and some $C_1, C_2 > 0$, which in turn gives the more accurate slow motion estimate
\beq \label{grantSlow}
\sup_{0 \leq t \leq Me^{C\e^{-1}}}  || u_\e(t) - v||_{L^1} \to 0 \ \text{as} \ \e \to 0^+
\eeq
for some $C > 0$. Finally, Bellettini, Nayam and Novaga \cite{BellettiniNovagaNayam} gave a sharp version of Grant's second--order estimate by proving
\[
\begin{aligned}
G_{\e}[v_{\e}] &\geq Nc_W - 2\alpha_+\kappa_+^2 \sum_{k = 1}^{N} e^{-\alpha_+ \frac{d_k^{\e}}{\e} } - 2\alpha_-\kappa_-^2 \sum_{k = 1}^{N} e^{-\alpha_- \frac{d_k^{\e}}{\e} } \\
&\quad + \kappa_+^3 \beta_+  \sum_{k = 1}^{N} e^{- \frac{3\alpha_+}{2} \frac{d_k^{\e}}{\e}  }  + \kappa_-^3 \beta_-  \sum_{k = 1}^{N} e^{- \frac{3\alpha_-}{2} \frac{d_k^{\e}}{\e}  } \\
&\quad + o\left(    \sum_{k = 1}^{N} e^{- \frac{3\alpha_+}{2} \frac{d_k^{\e}}{\e}  }  \right) + o\left(  \sum_{k = 1}^{N} e^{- \frac{3\alpha_-}{2} \frac{d_k^{\e}}{\e}  }    \right)
\end{aligned}
\]
as $\e \to 0^+$, where $\alpha_{\pm}, \kappa_{\pm}, \beta_{\pm}$ are constants depending on the potential $W$ and $d_k^{\e}$ is the distance between the $k$--th and the $(k+1)$--th transitions of $v_{\e}$. This last work gives a  variational validation of \cite{CarrPego2}, \cite{CarrPego1}. Indeed, the sharp energy estimate allows the authors to (formally) recover the ODE describing the motion of transition points.\\

The situation in higher dimensions is more complicated. As in the one--dimensional case, it is well--known (see, e.g., \cite{RubinsteinSternberg}, \cite{BronsardStoth}) that, after rescaling time by $\e$, the nonlocal Allen--Cahn equation \eqref{NLAC} is the $L^2$--gradient flow of the energy  \eqref{EnergyDefOneD} subject to the mass constraint
\beq \label{massConstraintEquation}
\int_\Omega u \dx  = m,
\eeq
where here, and henceforth, $\Omega \subset \R^n$, $n \geq 2$. Furthermore, the energy $\E_\e: L^1(\Omega) \to [0,\infty]$ defined by
\beq \label{energyprecise}
\E_\e[u] := \begin{cases}
G_\e[u] &\text{ if } u \in H^1(\Omega) \text{ and } \int_\Omega u \dx  = m, \\
\infty &\text{ otherwise},
\end{cases}
\eeq
is known to $\Gamma$--converge to $\E_0: L^1(\Omega) \to [0,\infty]$, where
\beq \label{GammaLimit}
\E_0[u] := \begin{cases}2c_W P(\{u=a\};\Omega) &\text{ if } u \in BV(\Omega;\{a,b\})\text{ and } \int_\Omega u \dx  = m, \\ 
\infty &\text{ otherwise}.
\end{cases}
\eeq
Here $P(E; \Omega)$ denotes the \emph{relative perimeter of $E$ inside $\Omega$}, for any measurable set $E \subset \R^n$ (see Section 2).

\noindent In particular, if
\begin{equation}\label{localMinIntro}
u_{E_0} := a\chi_{E_0} + b\chi_{E_0^c}
\end{equation}
 is a local minimizer of $\mathcal{G}_0$ then $E_0$ is a surface of constant mean curvature, and the curvatures may affect the slow motion of solutions of \eqref{NLAC}. Much of the work in this setting has addressed the motion of phase ``bubbles'', namely solutions approximating a spherical interface compactly contained in $\Omega$. For example, Bronsard and Kohn \cite{BronsardKohn2} utilize variational techniques to analyze radial solutions $u_{\e,\text{rad}}$ of the Allen--Cahn equation. They prove that $u_{\e,\text{rad}}$ separates $\Omega$ into two regions where $u_{\e,\text{rad}} \approx +1$ and $u_{\e,\text{rad}} \approx -1$ and that the interface moves with normal velocity equal to the sum of its principal curvatures. In \cite{EiYanagida}, Ei and Yanagida investigate the dynamics of interfaces for the Allen--Cahn equation, where $\Omega$ is a strip--like domain in $\R^2$. They show that the evolution is slower than the mean curvature flow, \emph{but} faster than exponentially slow. This suggests that estimates of the type \eqref{intro7} cannot be expected to hold in higher dimensions. In the Cahn--Hilliard case, Alikakos, Bronsard and Fusco \cite{AlikakosBronsardFusco} use energy methods and detailed spectral estimates to show the existence of solutions of \eqref{CHintro} supporting almost spherical interfaces, which evolve by drifting towards the boundary with exponentially small velocity. Other related works include \cite{AlikakosBatesChen}, \cite{AlikakosFusco} and \cite{AlikakosFusco1998}. Most of these works require significant machinery, and often focus only on the existence of slowly moving solutions. \\

A key tool in our analysis of solutions of \eqref{NLAC} in the higher-dimensional setting is the analogue of \eqref{intro6} that was recently proved by Leoni and the first author \cite{LeoniMurray}. Their result assumes that the \emph{isoperimetric function}
\begin{equation} \label{isoFunctionDefinition}
\I(r) := \inf \{ P(E;\Omega): E \subset \Omega \text{ Borel, } \LL(E) = r\}, \quad r \in [0,\LL(\Omega)],
\end{equation}
satisfies a Taylor formula of order two at the value
\beq \label{r0form}
r_0 := \frac{b\LL(\Omega) - m}{b-a},
\eeq
where $m$ is the mass constraint given in \eqref{massConstraintEquation}, and where by a Taylor formula of order two we mean that there exists a neighborhood $U$ of $r_0$ such that
\beq \label{TaylorExpansion2}
\I(r) = \I(r_0) + \frac{d \I}{dr}(r_0)(r-r_0) + O(|r-r_0|^{1+\varsigma}),
\eeq
for some $\varsigma \in (0,1]$, for all $r \in U$ (see Lemma \ref{Ryan}; see also \cite{BavardPansu} and \cite{SternbergZumbrun}).  

 In certain settings it is known that $\I$ is semi--concave (see \cite{BavardPansu} and \cite{SternbergZumbrun}), and indeed we will later show that $\I$ is semi--concave as long as $\Omega$ is $C^{2,\sigma}$ (see Remark \ref{remark:semiconcave}). Hence, $\I$ satisfies a Taylor formula of order two at $\mathcal{L}^1$--a.e. $r$, or equivalently, for $\mathcal{L}^1$--a.e. mass $m $ in \eqref{massConstraintEquation}. \\

If a set $E_0 \subset \Omega$ satisfies 
\begin{equation} \label{globalMinimizer}
\quad \LL(E_0) = r_0, \qquad P(E_0;\Omega) = \I(r_0),
\end{equation}
then we call $E_0$ a \emph{volume--constrained global perimeter minimizer}. Classical results \cite{Gruter}, \cite{MaggiBook} establish the existence of volume--constrained global perimeter minimizers, and that the boundary of any volume--constrained global perimeter minimizer is a surface of (classical) constant mean curvature for $n\leq7$, provided $\partial \Omega$ is of class $C^{2,\alpha}$ (see Proposition \ref{propfund} and Lemma \ref{LocalVariation} below).

Under technical hypotheses on $\Omega, W, m$ (see Section 2), a simplified version of the main theorem in \cite{LeoniMurray} is the following.

\begin{theorem} \label{LMMainResult}
Assume that $\Omega, W, m$ satisfy hypotheses \eqref{dom}--\eqref{mass}, and suppose that $E_0 \subset \Omega$ is a volume--constrained global perimeter minimizer with $\LL(E_0) = r_0$. Suppose further that $\I$ satisfies a Taylor expansion of order two at $r_0$ (given by \eqref{r0form}) as in \eqref{TaylorExpansion2}. Then given any function $u \in L^1(\Omega)$, the following error bound holds
\beq
\E_\e[u] \geq \E_0[u_{E_0}] - C(\kappa)\e
\eeq
for all $\e>0$ sufficiently small, where $u_{E_0}$ is the function given in \eqref{localMinIntro} and $C(\kappa)$ is a known, sharp constant that depends only upon $W$, $P(E_0;\Omega)$ and the mean curvature $\kappa$ of $\partial E_0$.
\end{theorem}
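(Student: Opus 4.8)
The plan is to reduce the $n$-dimensional lower bound to a one-dimensional estimate along the lines of \eqref{intro6}, carried out in the coarea slicing of $\Omega$ by the level sets of $u$, and then to optimize the resulting one-dimensional profile against the second-order information encoded in the Taylor expansion \eqref{TaylorExpansion2} of $\I$ at $r_0$. First I would assume without loss of generality that $\E_\e[u]$ is bounded (otherwise there is nothing to prove), so that $u\in H^1(\Omega)$ with $\int_\Omega u\,\dx=m$; a standard truncation lets me assume $a\le u\le b$. The key device is the function $\phi(u):=\int_a^u W^{1/2}(s)\,ds$, whose composition $v:=\phi\circ u$ lies in $W^{1,1}(\Omega)$ and satisfies, by Young's inequality pointwise, $|\nabla v|\le \frac1\e W(u)+\frac\e2|\nabla u|^2$, with equality exactly on the optimal profile. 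Hence $\E_\e[u]\ge \int_\Omega |\nabla v|\,\dx$, and by the coarea formula this equals $\int_{\phi(a)}^{\phi(b)} P(\{v>\tau\};\Omega)\,d\tau = \int P(\{u>\phi^{-1}(\tau)\};\Omega)\,d\tau$. Writing $\rho(s):=\LL(\{u>s\})$ for the distribution function of $u$, each slice perimeter is bounded below by $\I(\rho(s))$, and since $\int_\Omega u\,\dx=m$ forces $\int_a^b \rho(s)\,ds$ (suitably normalized) to match the mass, the ``average'' slice sits near $r_0$.

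The main step is then the following one-dimensional optimization: one must show that for any admissible profile $s\mapsto \rho(s)$ interpolating (in the coarea sense) the constraint coming from the mass, the quantity $\int_a^b \I(\rho(s))\,|\phi'(\phi^{-1}\circ\text{(stuff)})|^{-1}\,\dots$ — i.e. the full coarea integral reweighted back to the $u$-variable — is at least $\E_0[u_{E_0}] - C(\kappa)\e$. Here I would Taylor-expand $\I$ at $r_0$ using \eqref{TaylorExpansion2}: the zeroth-order term gives $\I(r_0)\cdot(\text{length of transition in }v)$, which after accounting for $\int_{\phi(a)}^{\phi(b)}d\tau = c_W$ produces the leading term $2c_W\I(r_0)=\E_0[u_{E_0}]$ (using that $E_0$ is a global minimizer, $P(E_0;\Omega)=\I(r_0)$); the first-order term $\frac{d\I}{dr}(r_0)(\rho(s)-r_0)$ integrates, against the mass constraint, to a controlled quantity; and the remainder $O(|\rho(s)-r_0|^{1+\varsigma})$ together with the ``loss'' incurred in replacing the true (nonlinear, finite-$\e$) optimal one-dimensional profile by the sharp limiting one is what generates the $O(\e)$ correction. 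The constant $C(\kappa)$ emerges from explicitly solving the corresponding one-dimensional Euler--Lagrange problem — the same computation that in \cite{BellettiniNovagaNayam} yields the sharp exponential constants — now truncated at the linear order in the curvature $\kappa$ of $\partial E_0$, which is why $C$ depends only on $W$, $P(E_0;\Omega)$ and $\kappa$.

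I expect the main obstacle to be controlling the interplay between the geometry and the one-dimensional reduction: the coarea slicing only sees $P(\{u>s\};\Omega)$ slice by slice and loses the information that these slices must nest into a single evolving hypersurface close to $\partial E_0$. To recover a bound of the correct order $O(\e)$ (rather than merely $o(1)$) one cannot simply bound each slice by $\I(\rho(s))$ and stop; one needs the semiconcavity/Taylor structure of $\I$ to quantify how far the slice volumes $\rho(s)$ can drift from $r_0$ given the energy budget, and then feed that back. Concretely, the delicate point is showing that the deficit $\int \big(P(\{u>s\};\Omega) - \I(\rho(s))\big)\,ds$ plus the Young-inequality deficit $\int_\Omega \big(\tfrac1\e W(u)+\tfrac\e2|\nabla u|^2 - |\nabla v|\big)\,\dx$ together dominate the negative lower-order terms in the Taylor expansion up to $C(\kappa)\e$; this is where the hypothesis $n\le 7$ enters, via regularity of the minimizer $E_0$ (Proposition \ref{propfund}) ensuring $\kappa$ is a bona fide classical constant and the comparison profile can be built in a tubular neighborhood of $\partial E_0$. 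The remaining steps — truncation, the chain rule for $\phi\circ u$, and the coarea formula — are routine.
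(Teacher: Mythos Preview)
This theorem is not proved in the present paper; it is quoted from \cite{LeoniMurray}. What the paper does prove is the local analogue, Theorem~\ref{mainEnergyEstimate}, and the machinery assembled for that (Lemma~\ref{pz} and Theorem~\ref{1denergy}) exposes the mechanism of the original proof. Measured against that mechanism, your proposal has a genuine gap.

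The problem is at the very first move. You apply Young's inequality pointwise to obtain $\E_\e[u]\ge \int_\Omega|\nabla(\phi\circ u)|\,\dx$ and then invoke coarea. This is the standard Modica--Mortola liminf argument, and it is sharp \emph{to zeroth order}: it recovers $\E_0[u_{E_0}]$ in the limit. But it is not sharp to order $\e$. Once you have passed to $\int_\Omega|\nabla v|\,\dx$ you have discarded the quantity $\int_\Omega\big(\tfrac1\e W(u)+\tfrac\e2|\nabla u|^2-\sqrt{2W(u)}\,|\nabla u|\big)\,\dx$, and there is no mechanism for recombining this pointwise deficit with the slice-by-slice perimeter defect $\int\big(P(\{u>s\};\Omega)-\I(\rho(s))\big)\,ds$: after coarea the two live in incompatible coordinates, and neither individually carries a sign that helps at order $\e$. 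Your final paragraph identifies exactly this difficulty but does not supply a way past it; the sentence ``the deficit \dots\ together dominate the negative lower-order terms'' is the whole content of the theorem, not a step one can gloss over.

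The approach in \cite{LeoniMurray}, reproduced in this paper for the local case, avoids the loss by \emph{not} applying Young's inequality. Instead one rearranges $u$ onto a one-dimensional weighted interval: with $\eta:=\mathcal I^*\circ V_\Omega$ built from a regularization of the isoperimetric function and $f_u$ the increasing rearrangement of $u$ relative to the measure $\eta\,ds$ (Definition~\ref{rearrangementsDef}), one has the P\'olya--Szeg\H{o} type inequality \eqref{pz0}, which together with the equimeasurability \eqref{equazioneSopra} gives
\[
\E_\e[u]\ \ge\ \int_{-S_1}^{S_2}\Big(\tfrac1\e W(f_u)+\e\,(f_u')^2\Big)\eta\,ds\ =\ \mathcal F_\e[f_u].
\]
The point is that the full $\e$-dependent structure of the integrand survives the dimensional reduction; the Taylor expansion \eqref{TaylorExpansion2} enters through the regularity of the weight $\eta$, and the sharp lower bound with the explicit $C(\kappa)\e$ correction then comes from a genuine one-dimensional variational analysis of $\mathcal F_\e$ (Theorem~\ref{1denergy}, i.e.\ Theorem~4.20 of \cite{LeoniMurray}). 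Your coarea/$\phi$-substitution captures the leading term but cannot see the next one; the missing idea is the weighted rearrangement that transports both terms of the energy density intact to one dimension before any inequality is applied.
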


Thanks to the previous energy estimate, we are naturally led to the study of motion of solutions of the initial value problem \eqref{NLAC}. We will denote
\[
X_1 := \left \{ u \in L^2(\Omega) :  \int_{\Omega} u \dx  =m    \right \}.
\]
The first main result of the paper is the following.
\begin{theorem} \label{globalMotionIntro}
Assume that $\Omega, W, m$ satisfy hypotheses \eqref{dom}--\eqref{mass}, and let $E_0$ be a volume--constrained global perimeter minimizer with $\LL(E_0) = r_0$. Furthermore, suppose that $\I$ satisfies a Taylor expansion of order two at $r_0$ as in \eqref{TaylorExpansion2}. Assume that $u_{0,\e} \in X_1\cap L^\infty(\Omega)$ satisfy
\begin{equation} \label{newL1conv}
u_{0,\e} \to u_{E_0} \text{ in } L^1(\Omega) \text{ as } \e \to 0^+
\end{equation}
and
\beq \label{energyUpper}
\E_{\e}[u_{0,\e}] \leq \E_0[u_{E_0} ] + C\e
\eeq
for some $C > 0$. Let $u_\e$ be a solution to \eqref{NLAC}. Then, for any $M > 0$
\beq \label{slowNLAC}
\sup_{0 \leq t \leq M\e^{-1}} || u_{\e}(t) - u_{E_0}||_{L^2} \to 0 \ \text{as} \ \e \to 0^+.
\eeq
\end{theorem}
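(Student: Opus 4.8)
The plan is to exploit the gradient-flow structure of \eqref{NLAC}: after the time rescaling built into \eqref{NLAC}, the flow is (formally) the $L^2$-gradient flow of $\E_\e$, so it satisfies the energy–dissipation identity
\beq \label{ed-plan}
\E_\e[u_\e(t)] + \int_0^t \| \partial_s u_\e(s) \|_{L^2}^2 \ds = \E_\e[u_{0,\e}]
\eeq
for a.e. $t$ (with the appropriate $\e$-weights carried through the rescaling). Combining \eqref{ed-plan} with the well-preparedness hypothesis \eqref{energyUpper} and the sharp lower bound of Theorem \ref{LMMainResult}, namely $\E_\e[u_\e(t)] \geq \E_0[u_{E_0}] - C(\kappa)\e$, one gets that the total dissipation $\int_0^t \|\partial_s u_\e\|_{L^2}^2 \ds$ is $O(\e)$ uniformly in $t$; equivalently, over the time window $[0, M\e^{-1}]$ the accumulated ``velocity'' is controlled. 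By Cauchy–Schwarz, for $0 \le t \le M\e^{-1}$,
\beq \label{cs-plan}
\| u_\e(t) - u_{0,\e} \|_{L^2} \leq \left( \int_0^t \| \partial_s u_\e(s) \|_{L^2}^2 \ds \right)^{1/2} t^{1/2} \leq (C\e)^{1/2} (M\e^{-1})^{1/2} = (CM)^{1/2},
\eeq
which is bounded but \emph{not} small — so the naive estimate is not enough, and this is exactly where the argument needs to be more careful.

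To upgrade \eqref{cs-plan} I would argue by contradiction, in the spirit of Bronsard–Kohn: suppose \eqref{slowNLAC} fails, so there exist $\eta > 0$, a subsequence $\e_j \to 0$, and times $t_j \in [0, M\e_j^{-1}]$ with $\| u_{\e_j}(t_j) - u_{E_0} \|_{L^2} \geq \eta$. Using \eqref{cs-plan} with $\e$ small we know $u_{\e_j}(t_j)$ stays within a fixed $L^2$-distance of $u_{0,\e_j}$, hence (after passing to a further subsequence) $u_{\e_j}(t_j) \to u_\star$ in $L^1(\Omega)$ for some $u_\star$, and by lower semicontinuity of $\E_0$ together with the energy bound, $u_\star \in BV(\Omega;\{a,b\})$ with $\int_\Omega u_\star = m$ and $\E_0[u_\star] \le \E_0[u_{E_0}] = 2c_W \I(r_0)$; since $E_0$ is a volume-constrained \emph{global} perimeter minimizer, $u_\star$ is itself a minimizer of $\E_0$. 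The key point is then to use the dissipation bound to show that along the flow one cannot ``hop'' from the basin of $u_{E_0}$ to that of a different minimizer: the set of times where $\|u_\e(t) - u_{E_0}\|_{L^2} \ge \eta$ but $\|u_\e(t) - u_\star\|_{L^2} \ge \eta$ for every other $\E_0$-minimizer $u_\star$ must have measure that forces, via a lower bound on $\|\partial_t u_\e\|_{L^2}$ on a ``transition corridor'' (an interpolation/compactness estimate showing that moving an $L^1$-distance at least $\eta$ between well-separated minimizers costs dissipation bounded below independently of $\e$), a contradiction with $\int_0^{M\e^{-1}} \|\partial_t u_\e\|_{L^2}^2 \ds = O(\e)$.

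More concretely, the mechanism I expect to use is: (i) a \emph{quantitative} version of the $\Gamma$-liminf inequality — if $\E_\e[v] \le 2c_W\I(r_0) + C\e$ then $v$ is $L^1$-close to the set of $\E_0$-minimizers, with a modulus independent of $\e$; (ii) combined with the energy identity, $\E_\e[u_\e(t)] \le 2c_W \I(r_0) + C\e$ for \emph{all} $t \ge 0$, so $u_\e(t)$ is always close to the (compact) set $\mathcal{M}$ of minimizers; (iii) if $\mathcal{M} = \{u_{E_0}\}$ (or, more generally, if the connected component of $\mathcal M$ containing the limit of $u_{0,\e}$ is $\{u_{E_0}\}$, e.g.\ because $u_{E_0}$ is isolated in $L^1$), then $u_\e(t)$ is forced to stay near $u_{E_0}$ and we are done; (iv) if $\mathcal{M}$ is a nontrivial continuum, track the projection of $u_\e(t)$ onto $\mathcal M$ and show that, because the energy gap above $2c_W\I(r_0)$ is only $O(\e)$, the ``tangential drift'' along $\mathcal M$ over time $M\e^{-1}$ is controlled by the dissipation and is $o(1)$; since $u_{0,\e} \to u_{E_0}$, the projection starts near $u_{E_0}$ and stays near it. The main obstacle is precisely step (iv): making rigorous the decomposition of the motion into a rapidly-relaxing ``normal'' part and a slow ``tangential'' part along the minimizing manifold, and showing the tangential part does not drift by $O(1)$ on the $\e^{-1}$ timescale — this is where the second-order $\Gamma$-expansion (Theorem \ref{LMMainResult}) must be used not just as a one-sided bound but to control how energy decreases, perhaps via a \L{}ojasiewicz-type inequality relating $\E_\e[u] - 2c_W\I(r_0)$ to $\|\nabla \E_\e[u]\|$ near $\mathcal M$. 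If instead one is content with the hypothesis that $u_{E_0}$ is an \emph{isolated} $L^1$-local minimizer among $\E_0$-minimizers (which is generic), steps (i)--(iii) already close the argument and step (iv) is vacuous.
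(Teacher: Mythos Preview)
Your proposal misses the crucial $\e^{-1}$ weight in the energy--dissipation identity. For the equation \eqref{NLAC} as written (i.e., $\partial_t u_\e = \e^2\Delta u_\e - W'(u_\e) + \e\lambda_\e$), multiplying by $\partial_t u_\e$ and integrating gives
\[
\int_\Omega (\partial_t u_\e)^2\dx = -\frac{d}{dt}\int_\Omega\Big(\frac{\e^2}{2}|\nabla u_\e|^2 + W(u_\e)\Big)\dx = -\e\,\frac{d}{dt}\E_\e[u_\e],
\]
so that the correct identity is
\[
\E_\e[u_{0,\e}] - \E_\e[u_\e(t)] \;=\; \e^{-1}\int_0^t \|\partial_s u_\e(s)\|_{L^2}^2\ds,
\]
not your \eqref{ed-plan}. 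Combined with \eqref{energyUpper} and Theorem~\ref{LMMainResult} this yields $\int_0^t \|\partial_s u_\e\|_{L^2}^2\ds \le C\e^2$ (order $\e^2$, not $\e$), uniformly in $t\ge 0$. Plugging this into your Cauchy--Schwarz step gives, for $0\le t\le M\e^{-1}$,
\[
\|u_\e(t)-u_{0,\e}\|_{L^2} \le t^{1/2}\Big(\int_0^t \|\partial_s u_\e\|_{L^2}^2\ds\Big)^{1/2} \le (M\e^{-1})^{1/2}(C\e^2)^{1/2} = C'\e^{1/2},
\]
which \emph{does} vanish as $\e\to 0^+$. Together with $\|u_{0,\e}-u_{E_0}\|_{L^2}\to 0$ (from \eqref{newL1conv} and the uniform $L^\infty$ bound on $u_{0,\e}$), the triangle inequality finishes the proof. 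This is exactly the paper's argument.

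Everything in your proposal from ``To upgrade \eqref{cs-plan}'' onward---the contradiction argument, the compactness of the minimizing set $\mathcal{M}$, the tangential/normal decomposition, the \L ojasiewicz inequality---is addressing a difficulty that does not exist once the scaling is tracked correctly. Those ideas are not wrong in spirit (they are relevant for sharper timescales or for flows where the dissipation really is only $O(\e)$), but here they are unnecessary, and step~(iv) as you describe it would be genuinely hard to make rigorous without additional structure.
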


\begin{remark}
The assumption $u_{0,\e} \in X_1\cap L^\infty(\Omega)$ is needed in order to have regularity of the solutions, see Theorem \ref{nlacregular}. In particular, \eqref{hyp} is satisfied thanks to the hypotheses on the potential, see \eqref{w4}.
\end{remark}

Using Theorem \ref{LMMainResult}, we can also prove that solutions to the Cahn--Hiliard equation with Neumann boundary conditions
\beq \label{CHivp}
\left \{
\begin{aligned}
\partial_t u_{\e} &= - \Delta v_{\e} &\mbox{in } \Omega \times (0,\infty), \\
v_{\e} &= \e^2 \Delta u_{\e} - W'(u_{\e})  &\mbox{in } \Omega \times [0,\infty), \\
\frac{\partial u_{\e}}{\partial \nu} &= \frac{\partial v_{\e}}{\partial n} = 0 &\mbox{on } \partial \Omega \times [0,\infty), \\
u_{\e} &= u_{0,\e} &\mbox{on } \Omega \times \{0 \}.
\end{aligned}
\right.
\eeq
admit analogous properties. As a matter of fact, it is well--known that the Cahn--Hilliard equation can be seen as the $X_2$--gradient flow of the energy in \eqref{energyprecise}, where the space $X_2(\Omega)$ is similar to $H^{-1}(\Omega)$. In particular, following  \cite{NamLe}, we will formally denote
\[
X_2(\Omega) := ((H^1(\Omega))', \langle \ , \  \rangle_{X_2}),
\]
where the inner product will be precisely introduced in Section 3. We shall prove the following.
\begin{theorem} \label{globalMotion}
Let $n = 2,3$, assume that $\Omega, W, m$ satisfy hypotheses \eqref{dom}--\eqref{mass}, and let $E_0$ be a volume--constrained global perimeter minimizer with $\LL(E_0) = r_0$. Furthermore, suppose that $\I$ satisfies a Taylor expansion of order 2 at $r_0$ as in \eqref{TaylorExpansion2}. Assume that $u_{0,\e} \in X_2 \cap L^2(\Omega)$ satisfy
\begin{equation} \label{newL1conv2}
u_{0,\e} \to u_{E_0} \text{ in } X_2(\Omega) \text{ as } \e \to 0^+
\end{equation}
and
\beq \label{energyUpperCH}
\E_{\e}[u_{0,\e}] \leq \E_0[u_{E_0} ] + C\e
\eeq
for some $C > 0$. Let $u_\e$ be a solution to \eqref{CHivp}. Then, for any $M > 0$
\beq \label{slowCH}
\sup_{0 \leq t \leq M\e^{-1}} ||u_\e - u_{E_0}||_{X_2} \to 0 \ \text{as} \ \e \to 0^+.
\eeq
\end{theorem}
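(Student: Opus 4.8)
The plan is to mimic the standard variational (Bronsard--Kohn type) argument adapted to the $X_2$ metric, using the sharp lower bound of Theorem \ref{LMMainResult} together with the energy dissipation identity for the Cahn--Hilliard flow. First I would record the gradient flow structure: for a solution $u_\e$ of \eqref{CHivp} one has the energy identity
\beq \label{CHdissipation}
\E_\e[u_\e(t)] + \int_0^t \| \partial_s u_\e(s) \|_{X_2}^2 \ds = \E_\e[u_{0,\e}],
\eeq
which, combined with the well-preparedness hypothesis \eqref{energyUpperCH} and the lower bound $\E_\e[u_\e(t)] \geq \E_0[u_{E_0}] - C(\kappa)\e$ from Theorem \ref{LMMainResult}, yields
\beq
\int_0^\infty \| \partial_s u_\e(s) \|_{X_2}^2 \ds \leq \tilde{C}\e.
\eeq
By Cauchy--Schwarz, for any $0 \leq t \leq M\e^{-1}$ this gives $\| u_\e(t) - u_{0,\e} \|_{X_2} \leq \left( t \int_0^\infty \| \partial_s u_\e \|_{X_2}^2 \ds \right)^{1/2} \leq (M\tilde{C})^{1/2}\e^{1/2} \to 0$. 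Together with \eqref{newL1conv2} and the triangle inequality in $X_2$ this already proves the convergence \eqref{slowCH}, \emph{provided} one is content with the $X_2$ norm throughout; so the bulk of the work is really in justifying \eqref{CHdissipation} rigorously and in checking that the hypotheses of Theorem \ref{LMMainResult} are genuinely available along the flow.

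The second step is therefore to establish existence and sufficient regularity of solutions to \eqref{CHivp} so that \eqref{CHdissipation} holds as an equality (not merely an inequality) and so that $t \mapsto u_\e(t)$ is an absolutely continuous curve in $X_2$ with $\E_\e[u_\e(t)]$ finite and non-increasing. This is where the restriction $n = 2, 3$ enters: in these dimensions the Sobolev embeddings make $H^1(\Omega) \hookrightarrow L^p(\Omega)$ strong enough to control the nonlinearity $W'(u_\e)$ and to get global-in-time weak solutions with the requisite energy identity (here one cites the standard Cahn--Hilliard well-posedness theory, as in \cite{NamLe}, and the hypotheses on $W$ such as the growth bound \eqref{w4}). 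I would state this as a preliminary proposition (analogous to Theorem \ref{nlacregular} for the Allen--Cahn case) and then quote it.

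The third step is to note that $\E_\e[u_\e(t)] \geq \E_0[u_{E_0}] - C(\kappa)\e$ requires nothing about $u_\e(t)$ beyond membership in the domain of $\E_\e$, since Theorem \ref{LMMainResult} holds for \emph{any} $u \in L^1(\Omega)$; this is the crucial point that makes the argument work uniformly in $t$ without tracking the interface. Finally I would assemble the pieces: fix $M > 0$, split $\| u_\e(t) - u_{E_0} \|_{X_2} \leq \| u_\e(t) - u_{0,\e} \|_{X_2} + \| u_{0,\e} - u_{E_0} \|_{X_2}$, bound the first term by $(M\tilde{C})^{1/2}\e^{1/2}$ uniformly for $t \in [0, M\e^{-1}]$ via the dissipation estimate, and send $\e \to 0^+$, using \eqref{newL1conv2} for the second term. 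The main obstacle I anticipate is not the variational estimate itself but the rigorous justification of the energy identity \eqref{CHdissipation} together with the finiteness of $\E_\e[u_{0,\e}]$ propagating appropriately — i.e. the functional-analytic setup of the $X_2 \cong (H^1(\Omega))'$ gradient flow and the approximation argument needed to upgrade the a priori dissipation \emph{inequality} to the identity used above; this is precisely why the dimensional constraint $n \leq 3$ is imposed, and care is needed there.
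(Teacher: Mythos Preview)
Your approach is exactly the one the paper uses: triangle inequality in $X_2$, the Bochner/H\"older bound $\|u_\e(t)-u_{0,\e}\|_{X_2}\leq t^{1/2}\left(\int_0^t\|\partial_s u_\e\|_{X_2}^2\,ds\right)^{1/2}$, the gradient flow identity, and Theorem~\ref{LMMainResult} applied at time $t$ (using only that $u_\e(t)\in L^1(\Omega)$, which the paper obtains from the cited regularity theorems for $n\leq 3$).

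There is, however, a bookkeeping slip in your dissipation identity. Because the equation \eqref{CHivp} is $\partial_t u_\e = -\e\,\nabla_{X_2}\E_\e[u_\e]$ (the extra $\e$ comes from $\E_\e[u]=\int \e^{-1}W(u)+\tfrac{\e}{2}|\nabla u|^2$), the correct identity is
\[
\E_\e[u_{0,\e}]-\E_\e[u_\e(t)] \;=\; \e^{-1}\int_0^t \|\partial_s u_\e(s)\|_{X_2}^2\,ds,
\]
cf.\ \eqref{energyIdentityCH}. With your version (missing the $\e^{-1}$) one only gets $\int_0^\infty\|\partial_s u_\e\|_{X_2}^2\,ds\leq \tilde C\e$, and then for $t\leq M\e^{-1}$ the Cauchy--Schwarz bound is $(M\e^{-1}\cdot \tilde C\e)^{1/2}=(M\tilde C)^{1/2}$, which does \emph{not} tend to zero; your claimed extra $\e^{1/2}$ is an arithmetic error that happens to compensate. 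With the correct identity one obtains $\int_0^t\|\partial_s u_\e\|_{X_2}^2\,ds\leq \tilde C\e^2$ and hence $(M\tilde C)^{1/2}\e^{1/2}$, which is the estimate you wanted. Once this factor is restored, your argument is complete and coincides with the paper's proof.
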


\begin{remark}
To the best of our knowledge, regularity results for \eqref{CHivp} have not been formally derived  in the case $n \geq 4$. For this reason, we state the previous result in a lower dimensional setting and we rely on Theorems \ref{chreg1} and \ref{chreg2} for the regularity of solutions. On the other hand, if we assume that solutions $u_\e(t) \in L^1(\Omega)$ for all $t \geq 0$, then our results hold for any $1 < n \leq 7$.
\end{remark}

Next we show that Theorem \ref{globalMotionIntro} continues to hold for certain volume--constrained local perimeter minimizers (for a precise definition see Definition \ref{LocalPerimeterMinimizer} in Section 2). For this purpose, we introduce a local version of the isoperimetric function $\I$ defined by \eqref{isoFunctionDefinition}. Given a Borel set $E_0\subset \Omega$ and $\delta > 0$ we define the \emph{local isoperimetric function} of parameter $\delta$ about the set $E_0$ to be
\beq \label{labelstar3}
\Iloc(r) := \inf \{ P(E, \Omega): E \subset \Omega \text{ Borel, } \LL(E) = r, \alpha(E_0,E) \leq \delta    \},
\eeq
where
\beq \label{labelsharp3}
\alpha(E_1,E_2) := \min\{ \LL(E_1 \setminus E_2), \LL(E_2 \setminus E_1)   \}
\eeq
for all Borel sets $E_1,E_2 \subset \Omega$. \\

Under smoothness assumptions on $\Iloc$ and other technical hypotheses on $\Omega, W, m$ (see Section 2), we will show the following result.
\begin{theorem}\label{mainEnergyEstimate}
Assume that $\Omega, W, m$ satisfy hypotheses \eqref{dom}--\eqref{mass}, let $E_0$ be a volume--constrained local perimeter minimizer with $\LL(E_0) = r_0$. Fix $\delta > 0$ and suppose that $\Iloc$ admits a Taylor expansion of order two at $r_0$ as in \eqref{TaylorExpansion2}. Then for any $u \in L^1(\Omega)$ satisfying 
\beq \label{abwells}
\|u-u_{E_0}\|_{L^1} \leq 2\delta
\eeq
we have
\beq
\E_\e[u] \geq \E_0[u_{E_0}] - C(\kappa)\e,
\eeq
for $\e >0$ sufficiently small, where $C(\kappa)$ is a known, sharp constant that depends only upon $W$, $P(E_0;\Omega)$ and the mean curvature $\kappa$ of $\partial E_0$.
\end{theorem}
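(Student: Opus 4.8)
The statement is the local analogue of Theorem~\ref{LMMainResult}, and the plan is to re-run the proof of that theorem from \cite{LeoniMurray} with the global isoperimetric function $\I$ replaced throughout by the local one $\Iloc$. In \cite{LeoniMurray} the \emph{global} minimality of $E_0$ and the regularity of $\I$ enter only through the isoperimetric inequality $P(F;\Omega)\ge\I(\LL(F))$, applied to level sets $F$ of the competitor $u$, together with the order-two Taylor expansion \eqref{TaylorExpansion2} of $\I$ at $r_0$. Accordingly, the work lies in two checks: (i) that the level sets $F$ produced by a competitor $u$ obeying \eqref{abwells} are admissible in the infimum \eqref{labelstar3} defining $\Iloc$, i.e.\ $\alpha(E_0,F)\le\delta$; and (ii) that $\Iloc$ shares with $\I$ the first-order Taylor data at $r_0$, so that the \emph{same} sharp constant $C(\kappa)$ results.

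I would begin with the standard reductions. If $u\notin H^1(\Omega)$ or $\int_\Omega u\dx\neq m$ then $\E_\e[u]=+\infty$ and there is nothing to prove; otherwise, replacing $u$ by its truncation $\bar u:=\min(b,\max(a,u))$ only decreases $G_\e$ (one has $|\nabla\bar u|\le|\nabla u|$ a.e.\ and $W(\bar u)\le W(u)$ a.e.\ by the structure of $W$) and, since $u_{E_0}$ is $[a,b]$-valued, only decreases $\|u-u_{E_0}\|_{L^1}$, so \eqref{abwells} is preserved. Hence I may assume $a\le u\le b$. For the core estimate I would import verbatim from \cite{LeoniMurray} the chain of inequalities that, via the coarea formula, a Young inequality, the one-dimensional heteroclinic solution $q$ of $q''=W'(q)$, and the Taylor expansion of the isoperimetric function at $r_0$, yields $G_\e[u]\ge\E_0[u_{E_0}]-C(\kappa)\e$, with $C(\kappa)$ built from $W$, $P(E_0;\Omega)=\I(r_0)$ and $\kappa=\I'(r_0)$ (the constant mean curvature of $\partial E_0$, cf.\ Lemma~\ref{LocalVariation}).

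To legitimize the replacement of $\I$ by $\Iloc$ in that chain I carry out (i): for $t\in(a,b)$, since $a\le u\le b$, a Markov inequality gives
\[
\LL(E_0\cap\{u\ge t\})\le\frac{1}{t-a}\int_{E_0}(u-a)\dx,\qquad
\LL(E_0^c\cap\{u<t\})\le\frac{1}{b-t}\int_{E_0^c}(b-u)\dx,
\]
and both right-hand sides are bounded by $\|u-u_{E_0}\|_{L^1}\le2\delta$; since $E_0\setminus\{u<t\}=E_0\cap\{u\ge t\}$ and $\{u<t\}\setminus E_0=E_0^c\cap\{u<t\}$, it follows that $\alpha(E_0,\{u<t\})\le\delta$ for all $t$ in the relevant range (the structural factor $b-a$ being what the constant in \eqref{abwells} accounts for), so that $P(\{u<t\};\Omega)\ge\Iloc(\LL(\{u<t\}))$ is admissible. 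For (ii): because $E_0$ itself is a competitor in \eqref{labelstar3} with $\alpha(E_0,E_0)=0$ and $E_0$ is a volume-constrained local perimeter minimizer, $\Iloc(r_0)=P(E_0;\Omega)=\I(r_0)$; and since $\Iloc\ge\I$ near $r_0$ with equality at $r_0$, the assumed differentiability of $\Iloc$ at $r_0$ forces $\Iloc'(r_0)=\I'(r_0)=\kappa$. Hence the computation reproduces the very same constant $C(\kappa)$.

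The delicate point, and the one I expect to absorb most of the work, is making sure the localization leaves the sharp constant untouched: one must control the two end-portions of the $t$-integration, where $\LL(\{u<t\})$ need not lie in the neighborhood on which \eqref{TaylorExpansion2} is available for $\Iloc$, and show that the $O(|r-r_0|^{1+\varsigma})$ remainder in \eqref{TaylorExpansion2}, combined with the deviation of $\LL(\{u<t\})$ from $r_0$, contributes only at order $o(\e)$. Both are handled as in \cite{LeoniMurray}: the end-portions by exploiting $W(a)=W(b)=0$ and the coercivity of $W$ to fold them into the error, and the remainder by the elementary balancing of the diffuse-interface $\e$-penalty against the perimeter deficit measured by $\Iloc$. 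With (i) and (ii) in hand, the conclusion $\E_\e[u]\ge\E_0[u_{E_0}]-C(\kappa)\e$ follows for $\e$ small.
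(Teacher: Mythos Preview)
Your overall plan matches the paper's: both re-run the argument of \cite{LeoniMurray} with $\Iloc$ in place of $\I$, after verifying that the sublevel sets of an admissible competitor are themselves admissible in \eqref{labelstar3}. Your check~(i) is exactly Proposition~\ref{levelSetsProposition}, and the paper then packages the coarea--Young estimate via a weighted one-dimensional rearrangement (Lemma~\ref{pz} and Theorem~\ref{1denergy}), which is the same mechanism you describe more directly.

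There is, however, a genuine slip in your check~(ii). Since $E_0$ is only a \emph{local} volume-constrained perimeter minimizer, there is no reason that $P(E_0;\Omega)=\I(r_0)$, nor that $\I$ is differentiable at $r_0$; the chain $\Iloc(r_0)=P(E_0;\Omega)=\I(r_0)$ and hence $\Iloc'(r_0)=\I'(r_0)=\kappa$ is unjustified. The correct route---implicit in the paper via Lemma~\ref{LocalVariation}---is intrinsic to $\Iloc$: the normal-perturbation family $\{V_r\}$ of $E_0$ is admissible in \eqref{labelstar3} for $r$ near $r_0$, so $\phi(r):=P(V_r;\Omega)$ is a smooth function touching $\Iloc$ from above at $r_0$ with $\phi'(r_0)=(n-1)\kappa_{E_0}$. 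Combined with the assumed differentiability of $\Iloc$ at $r_0$, this forces $\Iloc'(r_0)=(n-1)\kappa_{E_0}$ directly, without any reference to $\I$. With this correction your argument goes through and yields the same constant $C(\kappa)$ as the paper.
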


\begin{remark}
The closeness condition \eqref{abwells} depends on the distance between the wells of $W$, and it precisely reads as $\|u-u_{E_0}\|_{L^1} \leq (b-a)\delta$. Without loss of generality, we will assume $a = -1 < 1 = b$, see \eqref{mass}. 
\end{remark}

In fact, replacing $\I$ with $\Iloc$, we are able to show that Theorem \ref{LMMainResult} continues to hold for volume--constrained local perimeter minimizers. In turn, this brings us to the next main result of the paper.

\begin{theorem} \label{mainresult}
Assume that $\Omega, W, m$ satisfy hypotheses \eqref{dom}--\eqref{mass}, and let $E_0$ be a volume--constrained local perimeter minimizer with $\LL(E_0) = r_0$. Fix $\delta > 0$, and suppose that $\Iloc$ admits a Taylor expansion of order two at $r_0$ as in \eqref{TaylorExpansion2}. Assume that $u_{0,\e} \in X_1\cap L^\infty(\Omega)$ satisfy
\begin{equation}
u_{0,\e} \to u_{E_0} \text{ in } L^1(\Omega) \text{ as } \e \to 0^+
\end{equation}
and
\begin{equation}
\E_{\e}[u_{0,\e}] \leq \E_0[u_{E_0} ] + C\e
\end{equation}
for some $C > 0$. Let $u_\e$ be a solution to \eqref{NLAC}. Then, for any $M > 0$
\[
\sup_{0 \leq t \leq M\e^{-1}} || u_{\e}(t) - u_{E_0}||_{L^1} \to 0 \ \text{as} \ \e \to 0^+.
\]
\end{theorem}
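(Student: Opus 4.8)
The plan is to mirror the Bronsard--Kohn variational argument, now in the local-minimizer setting, using Theorem~\ref{mainEnergyEstimate} in place of the global energy lower bound. The gradient-flow structure of \eqref{NLAC} (after rescaling time by $\e$) gives the energy identity
\beq \label{energyIdentityProof}
\E_\e[u_\e(T)] + \int_0^T \int_\Omega \e \,|\partial_t u_\e|^2 \dx \dt = \E_\e[u_{0,\e}]
\eeq
for the rescaled solution, so that $t \mapsto \E_\e[u_\e(t)]$ is nonincreasing. Combining this with the well-preparedness hypothesis $\E_\e[u_{0,\e}] \le \E_0[u_{E_0}] + C\e$ and the lower bound from Theorem~\ref{mainEnergyEstimate} (valid for any competitor within $L^1$-distance $2\delta$ of $u_{E_0}$) yields, as long as the solution stays in that $L^1$-tube, the dissipation estimate
\[
\int_0^{M\e^{-1}} \int_\Omega \e\,|\partial_t u_\e|^2 \dx \dt \le \big(C + C(\kappa)\big)\e,
\]
hence $\int_0^{M\e^{-1}} \|\partial_t u_\e(t)\|_{L^2}^2 \dt \le (C+C(\kappa))$.

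Next I would convert this into an $L^1$ (indeed $L^2$) modulus-of-continuity statement: by Cauchy--Schwarz, for $0 \le s \le t \le M\e^{-1}$,
\[
\|u_\e(t) - u_\e(s)\|_{L^2} \le \int_s^t \|\partial_t u_\e\|_{L^2} \dt \le (t-s)^{1/2}\Big(\int_s^t \|\partial_t u_\e\|_{L^2}^2 \dt\Big)^{1/2},
\]
which on a window of length $o(\e^{-1})$ — say using a partition of $[0, M\e^{-1}]$ into $O(\e^{-1/2})$ subintervals and exploiting that the total dissipation is $O(1)$ — forces $u_\e(t)$ to remain $L^2$-close (hence, on the bounded domain $\Omega$, $L^1$-close) to $u_{0,\e}$, and therefore to $u_{E_0}$, on each subinterval; a standard argument then propagates this across all of $[0, M\e^{-1}]$. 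The key point is that the total energy budget $O(\e)$ of \emph{excess} energy, divided by the dissipation weight $\e$, gives an $O(1)$ bound that is exactly what is needed to control motion over the $\e^{-1}$ time scale.

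The main obstacle — and the reason the local version is more delicate than the global one — is the \textbf{confinement/continuation argument}: Theorem~\ref{mainEnergyEstimate} only provides the sharp lower bound for functions $u$ with $\|u - u_{E_0}\|_{L^1} \le 2\delta$, so one must show a priori that $u_\e(t)$ does not escape this tube on $[0, M\e^{-1}]$. I would handle this by a bootstrap/maximal-time argument: let $T_\e^* := \sup\{T \le M\e^{-1} : \sup_{[0,T]}\|u_\e(t)-u_{E_0}\|_{L^1} \le 2\delta\}$; on $[0,T_\e^*]$ the energy bound applies, yielding the dissipation estimate and hence (by the modulus-of-continuity step plus \eqref{newL1conv}) $\sup_{[0,T_\e^*]}\|u_\e(t)-u_{E_0}\|_{L^1} \to 0$, which for $\e$ small contradicts $T_\e^* < M\e^{-1}$ unless the tube constraint is strictly interior — so $T_\e^* = M\e^{-1}$ for $\e$ small, closing the loop. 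One should also record that the time rescaling $t \mapsto \e t$ turns \eqref{NLAC} into the gradient flow with dissipation weight $\e$ (this is where the $\e^{-1}$ time scale enters), and invoke Theorem~\ref{nlacregular} to guarantee $u_\e \in L^\infty$ and enough regularity for \eqref{energyIdentityProof} to be rigorous; the compact embedding / interpolation on the bounded set $\Omega$ turns the $L^2$ bound into the claimed $L^1$ convergence. Beyond confinement, the remaining steps are routine adaptations of the one-dimensional Bronsard--Kohn scheme and of the proof of Theorem~\ref{globalMotionIntro}.
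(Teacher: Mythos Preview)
Your overall strategy is the paper's: a confinement argument (the paper packages it as Proposition~\ref{aux2}) ensuring $\|u_\e(t)-u_{E_0}\|_{L^1}\le 2\delta$ so that Theorem~\ref{mainEnergyEstimate} applies, followed by the gradient-flow dissipation estimate and Cauchy--Schwarz. Your maximal-time $T_\e^*$ bootstrap is equivalent to the paper's device of choosing $T_\e$ with $\int_0^{T_\e}\|\partial_t u_\e\|_{L^1}\,dt=\delta$ and showing $T_\e\ge k_1\e^{-2}$.

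There is, however, a scaling error in your energy identity that, as written, breaks the quantitative step. Multiplying \eqref{NLAC} by $\partial_t u_\e$ and integrating gives (see \eqref{energyIdentity})
\[
\E_\e[u_{0,\e}]-\E_\e[u_\e(T)]=\e^{-1}\int_0^T\|\partial_t u_\e(s)\|_{L^2}^2\,ds,
\]
i.e.\ weight $\e^{-1}$, not $\e$. Under confinement this yields $\int_0^T\|\partial_t u_\e\|_{L^2}^2\,ds\le C\e^2$, two orders better than the $O(1)$ bound you record. With the correct bound a single Cauchy--Schwarz over $[0,M\e^{-1}]$ already gives
\[
\|u_\e(t)-u_{0,\e}\|_{L^2}\le t^{1/2}\Big(\int_0^t\|\partial_t u_\e\|_{L^2}^2\,ds\Big)^{1/2}\le (M\e^{-1})^{1/2}(C\e^2)^{1/2}=C\e^{1/2}\to 0,
\]
so no partition is needed; this is exactly what the paper does (after the time rescaling $\tilde u_\e(t)=u_\e(\e^{-1}t)$). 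By contrast, with your $O(1)$ dissipation bound the partition you sketch does not close: on $O(\e^{-1/2})$ subintervals of length $O(\e^{-1/2})$ the motion per subinterval is bounded by $(\e^{-1/2})^{1/2}\cdot(\text{subinterval dissipation})^{1/2}$, and even with total dissipation $O(1)$ this is at best $O(1)$, not $o(1)$, so one cannot propagate closeness. Once the weight is corrected your proof coincides with the paper's and the partition discussion can be deleted.
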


In view of the previous theorem, the regularity of $\Iloc$ at $r_0$ is of crucial importance. Note that unlike $\I$, the function $\Iloc$ depends upon $r_0$, and thus semi--concavity does not provide enough information. We will focus on the case where $E_0$ is either a ball or a set with positive second variation in the sense of \eqref{secondVariation}. The case where $E_0$ is a ball is linked to the case of phase ``bubbles'', which have been extensively studied in  \cite{AlikakosBatesChen}, \cite{AlikakosBronsardFusco}, \cite{AlikakosFusco}, and \cite{AlikakosFusco1998} (see Subsection 3.1).

\begin{theorem} \label{differentiableBall} Let $\Omega$ satisfy \eqref{dom}, let $E_0 = B_{\rho_0}(\x_0)  \subset \subset \Omega$ for some $\x_0 \in \Omega$ with $\rho_0 = (r_0 / \omega_n)^{1/n}$. Then there exist $\delta_0 > 0$ and $0 < r_1 < r_0$ such that
\beq \label{explicitMuDelta}
\Iloc (r) = C_n r^{\frac{n-1}{n}},
\eeq
for all $r \in [r_0 - r_1, r_0 + r_1]$ and all $0 < \delta \leq \delta_0$, where $C_n$ is a constant depending only on the dimension $n$. In particular, the map $r \mapsto \Iloc(r)$ admits a Taylor expansion of order two at $r_0$ as in \eqref{TaylorExpansion2} and Theorem \ref{mainresult} holds for $E_0$.
\end{theorem}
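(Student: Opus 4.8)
\emph{Proof plan.} Set $C_n := n\omega_n^{1/n}$, so that $P(B;\R^n) = C_n\LL(B)^{\frac{n-1}{n}}$ for every Euclidean ball $B$, and let $d_0 := \mathrm{dist}(\overline{E_0},\partial\Omega) > 0$. The plan is to establish the two inequalities $\Iloc(r)\le C_n r^{\frac{n-1}{n}}$ and $\Iloc(r)\ge C_n r^{\frac{n-1}{n}}$ separately, for all $r\in[r_0-r_1,r_0+r_1]$ and $\delta\in(0,\delta_0]$, where $r_1,\delta_0$ are small constants to be chosen depending only on $\Omega,\x_0,\rho_0,n$. The upper bound is immediate: for $r_1$ small the concentric ball $B := B_{(r/\omega_n)^{1/n}}(\x_0)$ has volume $r$ and radius close to $\rho_0$, hence $\overline{B}\subset\subset\Omega$ and $P(B;\Omega) = P(B;\R^n) = C_n r^{\frac{n-1}{n}}$; moreover $B\subseteq E_0$ or $E_0\subseteq B$, so $\alpha(E_0,B) = 0\le\delta$ and $B$ is admissible in \eqref{labelstar3}.

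For the lower bound, fix any admissible $E$. Using $\LL(E) = r$ and the elementary identity $\alpha(E_0,E) = \min\{r_0,r\} - \LL(E_0\cap E)$, one finds $\LL(E\triangle E_0)\le|r-r_0|+2\delta\le r_1+2\delta =: \eta$. Consider $h(t) := \LL(E\setminus\overline{B_t(\x_0)})$, which by the coarea formula is absolutely continuous and non-increasing with $-h'(t) = \HH^{n-1}(E^{(1)}\cap\partial B_t(\x_0))$ for a.e.\ $t$, and $h(\rho_0) = \LL(E\setminus E_0)\le\eta$. If $h(\rho_0+3d_0/4) = 0$, then $E$ coincides a.e.\ with a subset of $\overline{B_{\rho_0+3d_0/4}(\x_0)}\subset\subset\Omega$, so $P(E;\Omega) = P(E;\R^n)\ge C_n r^{\frac{n-1}{n}}$ and we are done. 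Otherwise $h > 0$ on $[\rho_0,\rho_0+3d_0/4]$, so $h^{1/n}$ is absolutely continuous there; a Gronwall-type computation shows that $-h'(t) > \frac{4n}{3d_0}\,h(\rho_0)^{1/n}\,h(t)^{\frac{n-1}{n}}$ cannot hold for a.e.\ $t$ in this interval (it would force $(h^{1/n})' < -\frac{1}{n}\cdot\frac{4n}{3d_0}h(\rho_0)^{1/n}$ and hence $h(\rho_0+3d_0/4)^{1/n} < h(\rho_0)^{1/n} - h(\rho_0)^{1/n} = 0$), so the reverse inequality holds on a set of positive measure. Since $\HH^{n-1}(\partial^*E\cap\partial B_t(\x_0)) = 0$ for $\mathcal{L}^1$-a.e.\ $t$ (coarea on the rectifiable set $\partial^*E$), we may fix $t^*\in(\rho_0,\rho_0+3d_0/4)$ enjoying both properties, and put $w := h(t^*)\in(0,\eta]$.

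With this $t^*$ we split $P(E;\Omega)\ge P(E;B_{t^*}(\x_0)) + P(E;\Omega\setminus\overline{B_{t^*}(\x_0)})$. The truncation inequality $P(E;B_{t^*})\ge P(E\cap B_{t^*};\R^n) - \HH^{n-1}(E^{(1)}\cap\partial B_{t^*})$, the Euclidean isoperimetric inequality applied to $E\cap B_{t^*}$ (of volume $r-w$), the relation $\HH^{n-1}(E^{(1)}\cap\partial B_{t^*}) = -h'(t^*)$, and the choice of $t^*$ (together with $h(\rho_0)\le\eta$) give
\beq
P(E;B_{t^*})\ \ge\ C_n(r-w)^{\frac{n-1}{n}} - \tfrac{4n}{3d_0}\,\eta^{1/n}\,w^{\frac{n-1}{n}}.
\eeq
For the second term, $P(E;\Omega\setminus\overline{B_{t^*}}) = P(E\setminus\overline{B_{t^*}};\Omega\setminus\overline{B_{t^*}})$, and since $\Omega\setminus\overline{B_{t^*}(\x_0)}$ is a bounded connected Lipschitz domain and $\LL(E\setminus\overline{B_{t^*}}) = w$ is small, the relative isoperimetric inequality gives $P(E;\Omega\setminus\overline{B_{t^*}})\ge c_*\,w^{\frac{n-1}{n}}$, where $c_* > 0$ is a lower bound for the relative isoperimetric constants of the domains $\Omega\setminus\overline{B_t(\x_0)}$, $t\in[\rho_0,\rho_0+3d_0/4]$; such a uniform $c_*$ exists because these domains stay at distance $\ge d_0/4$ from $\partial\Omega$ and are otherwise uniformly regular. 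Using the concavity estimate $(r-w)^{\frac{n-1}{n}}\ge r^{\frac{n-1}{n}} - C_1 w$ (with $C_1$ depending only on $n$ and a lower bound for $r-w$, hence on $r_1,\delta_0$) and combining,
\beq
P(E;\Omega)\ \ge\ C_n r^{\frac{n-1}{n}} - C_1 w + \Big(c_* - \tfrac{4n}{3d_0}\eta^{1/n}\Big)w^{\frac{n-1}{n}}.
\eeq
Because $0 < w\le\eta$ and $\frac{n-1}{n} < 1$, choosing $r_1,\delta_0$ (hence $\eta$) small enough that $c_* - \frac{4n}{3d_0}\eta^{1/n}\ge C_1\eta^{1/n}$ makes the right-hand side $\ge C_n r^{\frac{n-1}{n}}$. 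As $E$ was an arbitrary admissible competitor, $\Iloc(r)\ge C_n r^{\frac{n-1}{n}}$, which with the upper bound proves \eqref{explicitMuDelta}.

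It remains to read off the consequences. The function $r\mapsto C_n r^{\frac{n-1}{n}}$ is real-analytic near $r_0 > 0$, so by \eqref{explicitMuDelta} it admits a Taylor expansion of order two at $r_0$ as in \eqref{TaylorExpansion2} (indeed with $\varsigma = 1$). Taking $r = r_0$ in \eqref{explicitMuDelta} gives $P(E_0;\Omega) = C_n r_0^{\frac{n-1}{n}} = \Iloc(r_0)$, so $E_0$ attains the infimum defining $\Iloc(r_0)$ and is therefore a volume--constrained local perimeter minimizer; hence all hypotheses of Theorem \ref{mainresult} are satisfied and its conclusion holds for $E_0$. The heart of the proof is the lower bound, and within it the treatment of a competitor that leaks a small volume $w$ toward $\partial\Omega$: the coarea/Gronwall selection of a good slicing radius $t^*$ is the key device, exploiting that the isoperimetric cost $w^{\frac{n-1}{n}}$ of the escaping piece dominates the linear error $C_1 w$ it induces; a secondary technical point is the uniform lower bound $c_*$ on the relative isoperimetric constants of the sliced domains $\Omega\setminus\overline{B_t(\x_0)}$.
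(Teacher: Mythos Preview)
Your argument is essentially the paper's: a Gronwall/coarea selection of a good slicing radius, followed by the Euclidean isoperimetric inequality on the inner piece and a relative isoperimetric inequality on the outer one, then balancing the $w^{(n-1)/n}$ gain against the linear loss. The paper organizes this in two stages---first $\Omega=B_1$, then general $\Omega$ by slicing at a \emph{fixed} radius $R$ and invoking the first step inside $B_R$---so it needs the relative isoperimetric inequality only on the single fixed domain $\Omega\setminus B_R$. Your one-shot version instead requires a uniform lower bound $c_*$ over the family $\Omega\setminus\overline{B_t}$, $t\in[\rho_0,\rho_0+3d_0/4]$; you assert this, but the justification (``these domains stay at distance $\ge d_0/4$ from $\partial\Omega$'') is garbled, since $\partial\Omega$ is part of their boundary. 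The repair is easy: either argue the uniform constant directly (the domains are uniformly Lipschitz and connected, with inner and outer boundary components separated by at least $d_0/4$), or---closer to the paper---apply the relative isoperimetric inequality to $E\setminus\overline{B_{t^*}}$ in the \emph{fixed} domain $\Omega$, at the price of a second copy of $\HH^{n-1}(E^{(1)}\cap\partial B_{t^*})$, which your Gronwall step already controls.
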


Here $\omega_n := \LL(B_1(\bf{0}))$. Moreover, we are able to prove regularity of $\Iloc$ in the setting of isolated local minimizers with positive second variation in the sense of \eqref{secondVariation}.
Our proof relies upon the theory of the stability of the perimeter functional developed by Fusco, Maggi and Pratelli \cite{FuscoMaggiPratelli}. In particular, we use the results obtained by Julin and Pisante \cite{JulinPisante}, who extended the techniques introduced by Acerbi, Fusco and Morini \cite{AcerbiFuscoMorini}.

\begin{theorem} \label{RegularitySecondVariation}
Suppose that $\Omega$ satisfies \eqref{dom}, and that $E_0$ is a local volume--constrained perimeter minimizer with $\LL(E_0) = r_0$ and with positive second variation in the sense of \eqref{secondVariation}. Then, for sufficiently small $\delta$, $\Iloc$ admits a Taylor expansion of order two at $r_0$ as in \eqref{TaylorExpansion2}. In particular, Theorem \ref{mainresult} holds for such $E_0$.
\end{theorem}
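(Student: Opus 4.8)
The plan is to show that, for $\delta$ small, the local isoperimetric function $\Iloc$ coincides near $r_0$ with the \emph{constrained} perimeter obtained by perturbing $E_0$ through normal deformations, and that this constrained problem enjoys enough stability to produce a second-order Taylor formula. First I would invoke the stability theory of Fusco--Maggi--Pratelli \cite{FuscoMaggiPratelli} together with the quantitative results of Julin--Pisante \cite{JulinPisante} and Acerbi--Fusco--Morini \cite{AcerbiFuscoMorini}: since $E_0$ is a volume-constrained local perimeter minimizer with \emph{strictly} positive second variation in the sense of \eqref{secondVariation}, the perimeter functional satisfies a quantitative isoperimetric-type inequality near $E_0$, namely $P(E;\Omega) \geq P(E_0;\Omega) + c\,\alpha(E_0,E)^2$ for all competitors with $\LL(E)=r_0$ lying in a fixed $L^1$-neighborhood of $E_0$. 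This is the key input; everything else is a perturbation argument off of it.

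Next, for $r$ close to $r_0$ I would construct an explicit competitor for $\Iloc(r)$ by flowing $\partial E_0$ along its (smooth, by the regularity theory cited in the discussion of \eqref{globalMinimizer}) outward normal by a constant-speed normal displacement $t=t(r)$ chosen so the enclosed volume equals $r$; smoothness of $\partial E_0$ gives $t(r) = \frac{r-r_0}{P(E_0;\Omega)} + O(|r-r_0|^2)$ and, expanding the area functional to second order, $P(E_t;\Omega) = P(E_0;\Omega) + \kappa\,(r-r_0) + O(|r-r_0|^2)$, where $\kappa$ is the constant mean curvature of $\partial E_0$. This yields the upper bound $\Iloc(r) \leq \I(r_0) + \kappa(r-r_0) + O(|r-r_0|^2)$ (the derivative being $\kappa = \frac{d\Iloc}{dr}(r_0)$, consistent with the first-order term). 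For the matching lower bound I would take a near-optimal competitor $E$ for $\Iloc(r)$, adjust its volume back to $r_0$ by a small local perturbation changing perimeter by $O(|r-r_0|)$ and changing $\alpha(E_0,\cdot)$ by $O(|r-r_0|)$, then apply the quadratic stability inequality to the adjusted set. Combining, $P(E;\Omega) \geq P(E_0;\Omega) - C|r-r_0| + c\,\alpha(E_0, E)^2 - C|r-r_0|^2$; crucially, since the competitor must have volume $r$, one has $\alpha(E_0,E) \geq c'|r-r_0|$ — wait, that goes the wrong way, so instead I would argue that \emph{if} $\alpha(E_0,E)$ is much larger than $|r-r_0|$ the stability term already beats the upper bound, forcing the near-optimizer to satisfy $\alpha(E_0,E) = O(|r-r_0|)$, and then a direct second-order expansion of $P$ along the (nearly normal-graph) perturbation connecting $E$ to $E_0$ gives $P(E;\Omega) \geq \I(r_0) + \kappa(r-r_0) - C|r-r_0|^2$.

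The main obstacle I anticipate is the passage from the abstract stability estimate to a genuine \emph{two-sided} second-order bound: the quantitative inequality controls $P$ from below by $\alpha^2$, but to extract the precise first-order coefficient $\kappa$ and an $O(|r-r_0|^2)$ remainder I need near-optimizers for $\Iloc(r)$ to be $C^1$-close to $E_0$ (writable as normal graphs over $\partial E_0$ with small norm), not merely $L^1$-close. This requires combining the density/regularity estimates for almost-minimizers of the perimeter (uniform $C^{1,\gamma}$ bounds, via the cited regularity theory valid for $n \leq 7$) with the $L^1$-closeness forced by the stability term, to upgrade $L^1$-proximity to $C^{1,\gamma}$-proximity; this is the technically delicate step and is exactly where the dimensional restriction and the $C^{2,\sigma}$-regularity of $\partial\Omega$ enter. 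Once near-optimizers are known to be small normal graphs $u_\nu$ over $\partial E_0$ with the volume constraint fixing $\int_{\partial E_0} u_\nu\,d\HH^{n-1}$, the expansion $P = P(E_0;\Omega) + \kappa\int u_\nu + \frac12 Q[u_\nu] + o(\|u_\nu\|^2)$ together with positivity of the quadratic form $Q$ on the constrained subspace closes the argument and yields \eqref{TaylorExpansion2} with $\varsigma = 1$; Theorem \ref{mainresult} for such $E_0$ then follows immediately.
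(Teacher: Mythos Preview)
Your overall strategy---invoke the Julin--Pisante stability inequality, adjust the volume of a near-minimizer back to $r_0$ to apply it, and then upgrade $L^1$-closeness to graph-closeness via almost-minimizer regularity---is exactly the paper's approach. However, two quantitative claims in your proposal are too optimistic and, as stated, leave a real gap.

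First, the stability argument does \emph{not} force $\alpha(E_0,E)=O(|r-r_0|)$. After your mass-fixing perturbation the perimeter changes by $O(|r-r_0|)$, and combining this with the upper bound $\Iloc(r)\le P(E_0;\Omega)+O(|r-r_0|)$ and the quadratic stability inequality yields only $c\,\alpha(E_0,E)^2\le C|r-r_0|$, hence $\alpha(E_0,E)=O(|r-r_0|^{1/2})$. This is precisely what the paper obtains (Step~6), and there is no evident bootstrap to improve it. Consequently your claim that $\varsigma=1$ is unjustified; the paper in fact only proves the Taylor expansion with an exponent $\varsigma=(1-\beta)/2<1/2$, where $\beta$ comes from an interpolation inequality.

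Second, and more seriously, you correctly flag the upgrade from $L^1$-closeness to $C^{1,\gamma}$-closeness as ``the technically delicate step,'' but your final paragraph needs a \emph{quantitative} rate of $C^1$-closeness (or at least control of $\|u_\nu\|_{H^1}$ in terms of $|r-r_0|$) to bound the remainder in the second-order expansion, and almost-minimizer regularity by itself gives only qualitative convergence. The paper closes this gap differently: it first proves semi-concavity and local Lipschitz continuity of $\Iloc$ (so the curvatures $\kappa_{E_r}$ are uniformly bounded), then uses $(\Lambda,\rho_0)$-minimizer theory plus Schauder estimates to obtain \emph{uniform} $W^{m,2}$ bounds on the graph functions, and finally interpolates (Nirenberg) between the $L^1$ rate $|r-r_0|^{1/2}$ and the uniform higher-order bound to get $|\kappa_{E_r}-\kappa_{E_0}|=O(|r-r_0|^{(1-\beta)/2})$. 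Integrating this H\"older estimate on the derivative yields the Taylor formula. Your direct route through the quadratic form $Q$ could in principle be made to work, but not without supplying the same kind of interpolation to control the remainder; as written, the last paragraph asserts the conclusion rather than proving it.
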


To our knowledge, ours is the first work where slow motion estimates are obtained in higher dimensions without requiring structural assumptions on the initial data (i.e., data given by distance to a surface). Because we consider ``generic'' initial data, our result provides an ``ansatz--free'' slow motion estimate.
Moreover, we believe that the energy bound we use (see Theorem \ref{mainEnergyEstimate}) can be shown to be sharp, due to the sharpness of the bounds obtained in \cite{LeoniMurray}.
This speed is notably different from previous results obtained for specially--constructed initial data, but if the energy estimate is sharp, nothing better can be expected. \\

The paper is organized as follows: in Section 2 we state our technical assumptions and recall basic facts about geometric measure theory, we precisely define the local isoperimetric function and motivate its definition. In Section 3 we prove the slow motion results: in the global setting, Theorems \ref{globalMotionIntro} and \ref{globalMotion}, and in the local one, Theorem \ref{mainresult}. In Section 4 prove the regularity results Theorems \ref{differentiableBall} and \ref{RegularitySecondVariation}.

\section{Assumptions and Preliminaries}
Throughout this work we consider an open, connected, bounded domain $\Omega \subset \R^n$, with $n\leq 7$, such that
\begin{equation}\label{dom}
\LL(\Omega) = 1, \quad \partial \Omega \text{ is of class }  C^{4,\sigma}, \quad \sigma \in (0,1] . 
\end{equation}

\begin{remark}
We note that the only place where we need $\partial \Omega$ to be of class $C^{4,\sigma}$ is in the proof of Theorem \ref{RegularitySecondVariation}. All the other results in this paper continue to hold if the regularity of $\partial \Omega$ is assumed to be $C^{2,\sigma}$. Moreover, following Remark 5.2 in \cite{LeoniMurray}, we believe that assumption \eqref{dom} could be weakened to $\Omega$ with Lipschitz boundary for many of our results.
\end{remark}

We also make the following assumptions on the potential $W: \mathbb{R} \to [0,\infty)$:
\begin{align}
\label{w1}&W \text{ is of class $C^2$ and has precisely two zeros at } a<b; \\
\label{w2}&W''(a)=W''(b) > 0;\\
\label{w3} &\text{$W'$ has exactly 3 zeros  at $a,c,b$, with $a<c<b$,} \quad W''(c) < 0; \\
\label{w4}& \liminf_{|s| \to \infty} |W'(s)| =\infty.
\end{align}
A typical such potential would be $W(s) = \frac{1}{4} (s^2-1)^2$, but we remark that our analysis works for more general types of potentials, such as those considered in \cite{LeoniMurray}, where $W$ is allowed to be $C^{1,\beta}$, for $\beta \in (0,1]$. We restrict our attention to the case of $C^2$ potentials in order to make the assumptions more transparent. 
For simplicity, we assume that 
\beq 
a = -1, \quad b= 1
\eeq
and that the mass $m$ in \eqref{massConstraintEquation} satisfies
\begin{equation} \label{mass}
m \in (-1,1).
\end{equation}
By way of notation, constants $C$ vary from line to line throughout the whole paper.

We now recall some definitions and basic results from the theory of functions of bounded variation, see, e.g., \cite{EvansGariepy}, \cite{Leoni}.
\begin{definition}
Let $\Omega \subset \R^n$ be an open set. We define the \emph{space of functions of bounded variation $BV(\Omega)$} as the space of all functions $u \in L^1(\Omega)$ such that for all $i = 1, \ldots, n$ there exist finite signed Radon measures $D_i u : \mathcal{B}(\Omega) \to \R$ such that
\[
\int_{\Omega} u \frac{\partial \phi}{\partial x_i} d\x = - \int_{\Omega} \phi d D_i u
\]
for all $\phi \in C^\infty_0(\Omega)$. The measure $D_i u$ is called the \emph{weak}, or  \emph{distributional}, \emph{partial derivative of $u$} with respect to $x_i$. Moreover, if $u \in BV(\Omega)$, then the \emph{total variation measure of $Du$} is finite, namely
\[
|Du|(\Omega) := \sup \left\{  \sum_{i=1}^n \int_\Omega \Phi_i d D_iu : \quad \Phi \in C_0(\Omega; \R^n), \ ||\Phi ||_{ C_0(\Omega; \R^n)  } \leq 1  \right\} < \infty.
\]
\end{definition}
It is well--known that characteristic functions of smooth sets belong to $BV(\Omega)$. More generally, we have the following.
\begin{definition}
Let $E \subset \R^n$ be a Lebesgue measurable set and let $\Omega \subset \R^n$ be an open set. The \emph{perimeter} of $E$ in $\Omega$, denoted \emph{$P(E; \Omega)$}, is the variation of $\chi_E$ in $\Omega$, that is,
\[
P(E; \Omega) := |D \chi_E|(\Omega).
\]
The set $E$ is said to have \emph{finite perimeter in $\Omega$} if $P(E;\Omega) < \infty$. If $\Omega = \R^n$, we write $P(E) := P(E; \R^n)$. 
\end{definition}

Given a set $E$ of finite perimeter, by the Besicovitch derivation theorem (see, e.g., \cite{EvansGariepy}) we have  that for $|D\chi_E|$--a.e. $\x \in \mbox{supp}|D\chi_E|$ there exists the derivative of $D\chi_E$ with respect to its total variation $|D\chi_E|$ and that it is a vector of length 1. For such points we have
\begin{equation} \label{***}
 \frac{D\chi_E}{|D\chi_E|}(\x) = \lim_{r\to 0} \frac{D\chi_E(B_r(\x))}{|D\chi_E|(B_r(\x))} =: - \nu_E(\x) \quad\mbox{and} \quad |\nu_E(\x)|=1.
\end{equation}
\begin{definition}
We denote by $\partial^*E$ the set of all points in $\mbox{supp}(|D\chi_E|)$ where \eqref{***} holds. The set $\partial^*E$ is called the \emph{reduced boundary} of $E$, while the vector $\nu_E(\x)$ is the \emph{generalized exterior normal at $\x$}.
\end{definition}

Moreover, by the \emph{structure theorem for sets of finite perimeter}, (see, e.g., \cite{EvansGariepy}, Theorem 2, (iii), page 205), if $E$ has finite perimeter in $\R^n$, then for any Borel set $F \subset \R^n$
\beq \label{structureThm}
P(E; F) = \HH^{n-1} (\partial^* E \cap F),
\eeq
where $\HH^{n-1}$ stands for the $(n-1)$--dimensional Hausdorff measure. A classical result in the theory of sets of finite perimeter is the following \emph{isoperimetric inequality}.
\begin{theorem}
Let $E \subset \R^n$, $n \geq 2$, be a set of finite perimeter. Then either $E$ or $\R^n \setminus E$ has finite Lebesgue measure and
\beq \label{isoinequality}
\min \{ \LL(E), \ \LL(\R^n \setminus E)  \}^{\frac{n-1}{n}} \leq \frac{\omega_n^{-1/n}}{n} P(E),
\eeq
where equality holds if and only if $E$ is a ball.
\end{theorem}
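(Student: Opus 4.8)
The plan is to establish the inequality \eqref{isoinequality} by approximation together with the Brunn--Minkowski inequality, and then to treat the equality case separately, since it requires a genuinely different argument. First I would reduce to a convenient situation. Since $\chi_{\R^n\setminus E}=1-\chi_E$ we have $D\chi_{\R^n\setminus E}=-D\chi_E$, so $P(\R^n\setminus E)=P(E)$, and both sides of \eqref{isoinequality} are invariant under $E\mapsto\R^n\setminus E$. It is a standard fact (see \cite{MaggiBook}) that a set of finite perimeter in $\R^n$ satisfies $\min\{\LL(E),\LL(\R^n\setminus E)\}<\infty$, so after possibly passing to the complement I may assume $v:=\LL(E)<\infty$; I may also assume $P(E)<\infty$ (otherwise there is nothing to prove) and $v>0$ (otherwise the inequality is trivial). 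By mollifying $\chi_E$ and applying Sard's theorem to the superlevel sets of the regularizations --- equivalently, by the approximation theorem for sets of finite perimeter --- I obtain bounded open sets $E_h$ with $C^\infty$ boundary such that $\chi_{E_h}\to\chi_E$ in $L^1(\R^n)$, $\LL(E_h)\to v$, and $P(E_h)=\HH^{n-1}(\partial E_h)\to P(E)$.

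Next I would run the Brunn--Minkowski argument on each $E_h$. Using the Brunn--Minkowski inequality $\LL(A+B)^{1/n}\ge\LL(A)^{1/n}+\LL(B)^{1/n}$ with $A=E_h$ and $B=B_t(\0)$ (so $\LL(B)=t^n\omega_n$), one gets
\[
\LL\bigl(E_h+tB_1(\0)\bigr)\ \ge\ \bigl(\LL(E_h)^{1/n}+t\,\omega_n^{1/n}\bigr)^n\ =\ \LL(E_h)+n\,\omega_n^{1/n}\LL(E_h)^{\frac{n-1}{n}}\,t+O(t^2)
\]
as $t\to0^+$. On the other hand, since $\partial E_h$ is smooth and bounded, the tube (Steiner) formula gives $\lim_{t\to0^+}t^{-1}\bigl(\LL(E_h+tB_1(\0))-\LL(E_h)\bigr)=\HH^{n-1}(\partial E_h)=P(E_h)$. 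Subtracting $\LL(E_h)$, dividing by $t>0$ and letting $t\to0^+$ therefore yields $P(E_h)\ge n\,\omega_n^{1/n}\LL(E_h)^{\frac{n-1}{n}}$, that is,
\[
\LL(E_h)^{\frac{n-1}{n}}\ \le\ \frac{\omega_n^{-1/n}}{n}\,P(E_h),
\]
and passing to the limit $h\to\infty$ (using $\LL(E_h)\to v$ and $P(E_h)\to P(E)$) gives \eqref{isoinequality}.

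Finally, the equality case, which I expect to be the main obstacle, since the approximation above loses all rigidity information. One direction is immediate: if $E=B_\rho(\x_0)$ then $P(E)=n\omega_n\rho^{n-1}$ and $\LL(E)=\omega_n\rho^n$, so equality holds in \eqref{isoinequality}. For the converse I would argue by minimality. Among all sets of finite perimeter with prescribed volume $v$ a perimeter minimizer exists, by the direct method --- lower semicontinuity of the perimeter and $BV$-compactness, after a standard truncation step confining the competitors to a large fixed ball --- and any $E$ achieving equality in \eqref{isoinequality} is such a minimizer. By De~Giorgi's regularity theory (this is precisely where the dimensional restriction $n\le 7$ enters) the reduced boundary of the minimizer is a smooth embedded hypersurface which, by the first variation under volume-preserving deformations, has constant scalar mean curvature; by Alexandrov's moving-planes theorem the only compact such hypersurfaces are round spheres, so $E$ agrees with a ball up to a Lebesgue-null set. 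Alternatively, and more in the spirit of this paper, one may simply invoke the quantitative isoperimetric inequality of Fusco, Maggi and Pratelli \cite{FuscoMaggiPratelli} already cited above, namely $P(E)\ge n\omega_n^{1/n}\LL(E)^{\frac{n-1}{n}}\bigl(1+c_n\,A(E)^2\bigr)$ with $A(E)$ the Fraenkel asymmetry of $E$: equality forces $A(E)=0$. For a paper of the present scope one would of course just cite \cite{MaggiBook}, where the full statement --- inequality and equality case (via Steiner symmetrization) --- is proved.
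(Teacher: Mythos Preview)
The paper does not prove this theorem: it is stated in Section~2 as a classical fact in the theory of sets of finite perimeter, with no proof given. Your proposal therefore supplies far more than the paper does, and your argument for the inequality --- reduce to $\LL(E)<\infty$, approximate by smooth bounded sets, apply Brunn--Minkowski together with the first-order Steiner expansion, then pass to the limit --- is one of the standard routes and is correct.

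One remark on the equality case. Your parenthetical that ``this is precisely where the dimensional restriction $n\le 7$ enters'' is misplaced: the theorem, including its rigidity statement, holds for every $n\ge 2$, and the paper's standing hypothesis $n\le 7$ is needed elsewhere (regularity of volume-constrained minimizers inside the bounded domain $\Omega$, cf.\ the discussion around \eqref{globalMinimizer} and Lemma~\ref{LocalVariation}), not for this classical statement. Your regularity-plus-Alexandrov route does indeed run into the singular set for $n\ge 8$ --- and also tacitly assumes existence of a minimizer in $\R^n$, which is not quite a ``truncation to a large ball'' since mass can escape to infinity --- so it does not prove the full statement as written. Your two alternatives, invoking \cite{FuscoMaggiPratelli} or simply citing \cite{MaggiBook} (where the equality case is handled via Steiner symmetrization in every dimension), both work without restriction and are the appropriate way to close this; indeed, citing the textbook is exactly what the paper does.
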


A version of the isoperimetric inequality also holds in bounded domains (see Corollary 3.2.1 and Lemma 3.2.4 of \cite{MazyaBook}, or \cite{CianchiMazya}).
\begin{proposition} \label{eqn:isoperMazya}
Let $\Omega \subset \R^n$ be an open, bounded, connected set with Lipschitz boundary. Then there exists $C_\Omega > 0$ such that
\beq \label{mazyaineq}
\min \{ \LL(E), \ \LL(\Omega \setminus E)  \}^{\frac{n-1}{n}} \leq C_\Omega P(E;\Omega)
\eeq
for all sets $E \subset \Omega$ of finite perimeter.
\end{proposition}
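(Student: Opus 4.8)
The plan is to reduce the global inequality to a \emph{model} inequality on balls and half-balls, transport it through bi-Lipschitz boundary charts, and then glue the local estimates using the connectedness of $\Omega$.

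\textbf{Model case.} First I would prove the relative isoperimetric inequality on a ball $B$ and on a half-ball $B^+ := B \cap \{x_n > 0\}$ with a purely dimensional constant. For the half-ball one uses reflection: given $E \subset B^+$ of finite perimeter, let $\tilde E := E \cup \sigma(E)$, where $\sigma$ is the reflection across $\{x_n = 0\}$. Then $\LL(\tilde E) = 2\LL(E)$ and $P(\tilde E) \leq 2 P(E; B^+)$, since the portion of the reduced boundary of $E$ lying in $\{x_n = 0\}$ becomes interior to the symmetrized set and contributes nothing; applying the Euclidean isoperimetric inequality \eqref{isoinequality} to $\tilde E$ (and comparing $\LL(\tilde E)$ with $\LL(B\setminus \tilde E)$) yields $\min\{\LL(E), \LL(B^+\setminus E)\}^{(n-1)/n} \leq c_n P(E;B^+)$. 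The case of a ball is handled similarly (or by noting a ball is bi-Lipschitz to a half-ball), again with a dimensional constant.

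\textbf{Charts and transfer.} Since $\partial\Omega$ is Lipschitz, cover $\overline{\Omega}$ by finitely many open sets $U_0, U_1, \dots, U_N$ with $U_0 \subset\subset \Omega$ and, for each $j \geq 1$, a bi-Lipschitz homeomorphism $\Phi_j$ mapping $\Omega \cap U_j$ onto a half-ball, with bi-Lipschitz constants uniformly bounded. A bi-Lipschitz map distorts Lebesgue measure and relative perimeter by factors controlled only by its constant and $n$, so the model inequality transfers: there is $c > 0$ such that for every set $E \subset \Omega$ of finite perimeter and every $j$,
\[
\min\bigl\{\LL(E \cap U_j), \LL((\Omega\setminus E)\cap U_j)\bigr\}^{\frac{n-1}{n}} \leq c\, P(E; \Omega \cap U_j) \leq c\, P(E;\Omega).
\]

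\textbf{Gluing.} Fix $E \subset \Omega$ and assume, without loss of generality, $\LL(E) \leq \tfrac12 \LL(\Omega)$. If $P(E;\Omega) \geq \eta_0$ for a suitable small $\eta_0 > 0$, the conclusion is trivial since $\LL(E) \leq \LL(\Omega)$. Otherwise $P(E;\Omega) < \eta_0$, and by the local estimate one of $E$, $\Omega\setminus E$ fills all but a $(c\eta_0)^{n/(n-1)}$-fraction of each $\Omega \cap U_j$. Arranging the cover so that its nerve is connected (possible because $\Omega$ is connected) and taking $\eta_0$ small relative to the overlap measures $\LL(\Omega \cap U_i \cap U_j)$, a propagation argument forces the \emph{same} phase, say $E$, to be the minority side on every $U_j$ (if $\Omega\setminus E$ were the minority on every piece, then $\LL(\Omega\setminus E)$ would be small, contradicting $\LL(E) \leq \tfrac12\LL(\Omega)$). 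Consequently $\LL(E \cap U_j)^{(n-1)/n} \leq c\, P(E;\Omega)$ for all $j$, and since $t \mapsto t^{(n-1)/n}$ is subadditive and $\LL(E) \leq \sum_j \LL(E \cap U_j)$,
\[
\LL(E)^{\frac{n-1}{n}} \leq \sum_{j=0}^{N} \LL(E \cap U_j)^{\frac{n-1}{n}} \leq c(N+1)\, P(E;\Omega),
\]
which is the desired inequality with $C_\Omega := \max\{c(N+1),\ \LL(\Omega)^{(n-1)/n}/\eta_0\}$.

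\textbf{Main obstacle and alternatives.} The delicate point is the gluing step: one must choose the covering so that its nerve is connected and calibrate the threshold $\eta_0$ against the overlap measures in order to propagate the minority phase across all charts. An alternative route replaces the propagation by a compactness--contradiction argument, using the compact embedding $BV(\Omega)\hookrightarrow L^1(\Omega)$ for bounded Lipschitz $\Omega$, lower semicontinuity of perimeter, and connectedness to exclude the limit set, still invoking the local charts to treat the vanishing-mass case. Yet another, softer, option is to reduce the statement to the Poincar\'e--Sobolev inequality $\|u - \bar u\|_{L^{n/(n-1)}(\Omega)} \leq C\,|Du|(\Omega)$ applied to $u = \chi_E$, which concentrates all the geometry of $\partial\Omega$ into that single known inequality.
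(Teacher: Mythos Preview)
The paper does not supply its own proof of this proposition; it is quoted as a known result with references to Maz'ya's book and Cianchi--Maz'ya. So there is no argument in the paper to compare yours to, and your task was really to sketch a self-contained proof. Your overall architecture (model case $\to$ bi-Lipschitz charts $\to$ gluing via connectedness) is standard and sound, and the two alternatives you list at the end (compactness in $BV$, or reduction to the Poincar\'e--Sobolev inequality for $\chi_E$) are both correct and would each give a clean proof.

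There is, however, a genuine gap in your ``model case'' argument as written. You claim that after reflecting $E\subset B^+$ to $\tilde E\subset B$ one has $P(\tilde E)\le 2P(E;B^+)$ and then invoke the \emph{Euclidean} isoperimetric inequality~\eqref{isoinequality}. That inequality uses the full perimeter $P(\tilde E)$, which counts $\HH^{n-1}(\partial^*\tilde E\cap\partial B)$; this portion is \emph{not} controlled by $P(E;B^+)$. For instance, if $E$ is a small half-ball centred at a point of the curved part of $\partial B^+$, then $P(E;B^+)$ sees only the inner spherical piece, while $P(\tilde E)$ also picks up two caps on $\partial B$, so $P(\tilde E)>2P(E;B^+)$. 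What the reflection actually gives is $P(\tilde E;B)\le 2P(E;B^+)$, which only reduces the half-ball case to the \emph{relative} isoperimetric inequality in the full ball $B$ --- and that is precisely what still needs to be proved. Your remark that the ball case ``is handled similarly (or by noting a ball is bi-Lipschitz to a half-ball)'' is then circular. To close this step you must prove the relative inequality in a ball by an independent argument: e.g.\ the Poincar\'e--Sobolev inequality on convex domains applied to $\chi_E$, or a compactness--contradiction using $BV(B)\hookrightarrow L^1(B)$. Once that is in place, the rest of your outline goes through.
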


Next we give the formal definition of a local volume--constrained perimeter minimizer.

\begin{definition} \label{LocalPerimeterMinimizer}
Let $\Omega \subset \R^n$ be an open set. A measurable set $E_0 \subset \Omega$ is said to be a \emph{volume--constrained local perimeter minimizer} of $P(\cdot, \Omega)$ if there exists $\rho > 0$ such that
\[
P(E_0;\Omega) = \inf\left\{ P(E;\Omega): E \subset \Omega \text{\emph{ Borel, }} \LL(E_0) = \LL(E), \, \LL(E_0 \Delta E)  < \rho\right\}.
\]
\end{definition}

\iffalse
In the context of local volume--constrained perimeter minimizers we introduce the following.

\begin{definition} \label{defLocal}
Let $E_0 \subset \Omega$ be a volume--constrained local perimeter minimizer in the sense of Definition \ref{LocalPerimeterMinimizer} and let $E_1,E_2 \subset \Omega$ be Borel sets. We denote
\begin{equation} \label{Def:alpha}
\alpha(E_1,E_2) := \min\{ \LL(E_1 \setminus E_2), \LL(E_2 \setminus E_1)   \}.
\end{equation}
We define the \emph{local isoperimetric function} of parameter $\delta$ about the set $E_0$ to be
\beq \label{localIsoFunctionDef}
\Iloc(r) := \inf \{ P(E, \Omega): E \subset \Omega \text{\emph{ Borel, }} \LL(E) = r, \alpha(E_0,E) \leq \delta    \}.
\eeq
\end{definition}
In order to simplify the notation, we will write $\Idelta$ instead of $\Iloc$, when the set $E_0$ will be clear from the context. 
\fi
The next proposition motivates the definition of local isoperimetric function $\Iloc$ (see \eqref{labelstar3}).

\begin{proposition} \label{levelSetsProposition} Let $\Omega \subset \R^n$ be an open set, $E_0 \subset \Omega$ be a Borel set and let $v_{E_0} = -\chi_{{E_0}} + \chi_{{E_0}^c}$. Then
\beq \label{3star}
\alpha(E_0,\{u \leq s\}) \leq \delta
\eeq
for all $u \in L^1(\Omega)$ such that
\beq \label{iso1}
\|u-v_{E_0}\|_{L^1} \leq 2\delta,
\eeq
and for every $s \in \R$, where $\alpha$ is the number given in \eqref{labelsharp3}.
\end{proposition}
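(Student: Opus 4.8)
The plan is to unwind the definition of $\alpha$ and split according to where the threshold $s$ sits relative to the wells $\pm 1$. Fix $u \in L^1(\Omega)$ with $\|u - v_{E_0}\|_{L^1}\le 2\delta$ and $s\in\R$, and abbreviate
\[
A := \LL(\{u\le s\}\setminus E_0) = \LL(E_0^c\cap\{u\le s\}),\qquad B := \LL(E_0\setminus\{u\le s\}) = \LL(E_0\cap\{u>s\}),
\]
so that $\alpha(E_0,\{u\le s\}) = \min\{A,B\}$ and the goal is $\min\{A,B\}\le\delta$. The key point is that $v_{E_0}\equiv -1$ on $E_0$ and $v_{E_0}\equiv 1$ on $E_0^c$, so the disjoint sets $E_0^c\cap\{u\le s\}$ and $E_0\cap\{u>s\}$ are precisely the places where $u$ and $v_{E_0}$ straddle the level $s$; this forces $|u-v_{E_0}|$ to be bounded below there, and the $L^1$ bound then controls their measures.

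For the regime $-1\le s\le 1$ I would argue as follows. On $E_0^c\cap\{u\le s\}$ one has $|u-v_{E_0}| = 1-u \ge 1-s\ge 0$, and on $E_0\cap\{u>s\}$ one has $|u-v_{E_0}| = u+1 > s+1\ge 0$; since these two sets are disjoint subsets of $\Omega$,
\[
(1-s)\,A + (1+s)\,B \;\le\; \int_{E_0^c\cap\{u\le s\}}\!|u-v_{E_0}|\dx \;+\; \int_{E_0\cap\{u>s\}}\!|u-v_{E_0}|\dx \;\le\; \|u-v_{E_0}\|_{L^1}\le 2\delta.
\]
Because the weights $1-s$ and $1+s$ are nonnegative and sum to $2$, the left-hand side is at least $2\min\{A,B\}$, hence $\min\{A,B\}\le\delta$.

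The remaining two regimes are even simpler, using only one of the two integrals. If $s\ge 1$, then on $E_0\cap\{u>s\}$ we have $|u-v_{E_0}| = u+1 > s+1\ge 2$, so $2B\le \int_{E_0}|u-v_{E_0}|\dx\le 2\delta$, giving $\alpha\le B\le\delta$; symmetrically, if $s\le -1$, then on $E_0^c\cap\{u\le s\}$ we have $|u-v_{E_0}| = 1-u \ge 1-s\ge 2$, so $2A\le 2\delta$ and $\alpha\le A\le\delta$. This exhausts all cases. There is no genuine obstacle: the statement is an elementary slicing estimate, and the only point worth isolating is the convex-combination observation that turns the weighted bound $(1-s)A+(1+s)B\le 2\delta$ into a bound on $\min\{A,B\}$, which is what lets the middle range go through uniformly (including the endpoints $s=\pm1$).
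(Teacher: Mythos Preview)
Your proof is correct and follows essentially the same approach as the paper's: the same three cases in $s$, the same pointwise lower bounds $|u-v_{E_0}|\ge 1-s$ on $\{u\le s\}\setminus E_0$ and $|u-v_{E_0}|\ge 1+s$ on $E_0\setminus\{u\le s\}$, and the same conclusion from $(1-s)A+(1+s)B\ge 2\min\{A,B\}$. The only cosmetic difference is that you spell out the convex-combination step explicitly and include the endpoints $s=\pm1$ in the middle case, whereas the paper treats them via the outer cases.
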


\begin{proof}
Fix $\delta > 0$ and for $s \in \R$ define $F_s := \{ \x \in \Omega: u(\x) \leq s   \}$.
If $s \in (-1,1)$, then by \eqref{iso1},
\[
\begin{aligned}
2\delta &\geq \int_{ F_s \setminus E_0 }  |u-v_{E_0}|\dx  + \int_{ E_0 \setminus F_s}  |u-v_{E_0}| d\x  \\
&\geq (1-s) \LL(F_s \setminus E_0) + (1+s) \LL(E_0 \setminus F_s) \geq 2\alpha(E_0,F_s),
\end{aligned}
\]
so that \eqref{3star} is proved in this case. If $s \geq 1$, again by \eqref{iso1},
\[
2\delta \geq \int_{E_0 \setminus F_s} |u - v_{E_0}| d\x \geq (1+s) \LL(E_0 \setminus F_s) \geq 2\alpha(E_0,F_s).
\] 
The case $s \leq -1$ is analogous.
\end{proof}

\subsection{First and Second Variation of Perimeter}
In this subsection, for the convenience of the reader, we recall the following standard definitions and theorems, from Chapter 17 in \cite{MaggiBook}.

\begin{definition}
Let $\Omega \subset \R^n$ be open. A \emph{ one-parameter family $\{f_t\}_t$ of diffeomorphisms of $\R^n$ } is a smooth function
\[
(\x,t) \in \R^n \times (-\epsilon,\epsilon) \mapsto f(t,x) =: f_t(x) \in \R^n, \ \epsilon > 0,
\]
such that $f_t: \R^n \to \R^n$ is a diffeomorphism of $\R^n$ for each fixed $|t| < \epsilon$. In particular, we say that $\{f_t\}_{|t| < \epsilon}$ is a \emph{ local variation in $\Omega$} if it defines a one-parameter family of diffeomorphisms such that
\[
\begin{aligned}
f_0(\x) &= \x \quad \mbox{for all } x \in \R^n, \\
\left\{ \x \in \R^n: f_t(\x) \neq \x  \right\} \subset &\subset \Omega \quad \mbox{for all } 0< |t| < \epsilon.
\end{aligned}
\]
\end{definition}

It follows from the previous definition that given a local variation $\{f_t\}_{|t| < \epsilon}$ in $\Omega$, then
\[
E \Delta f_t(E) \subset \subset \Omega \quad \mbox{ for all } E \subset \R^n.
\]
Moreover, one can show that there exists a compactly supported smooth vector field $T \in C^{\infty}_c(\Omega; \R^n)$ such that the following expansions hold on $\R^n$,
\beq \label{TaylorLocalVar} 
f_t(\x) = \x + T(\x) + O(t^2), \quad \nabla f_t(\x) = \mbox{Id} + t\nabla T(\x) + O(t^2),
\eeq
and $T$ satisfies
\[
T(\x) = \frac{\partial f_t}{\partial t}(\x,0) \quad \x \in \R^n.
\]

\begin{definition}
The smooth vector field $T$ in \eqref{TaylorLocalVar} is called the \emph{initial velocity} of $\{f_t\}_{|t| < \epsilon}$.
\end{definition}
The following result gives an explicit expression for the \emph{first variation of the perimeter} of a set $E$, relative to $\Omega$, with respect to local variations $\{f_t\}_{|t| < \epsilon}$ in $\Omega$, that is, a formula for
\[
\frac{d}{dt} \Big\vert_{t=0} P(f_t(E); \Omega) \quad \mbox{ for } T \in C^{\infty}_c(\Omega; \R^n) \mbox{ given. }
\]

\begin{theorem}[First Variation of Perimeter] Let $\Omega \subset \R^n$ be open, let $E$ be a set of locally finite perimeter, and let $\{f_t\}_{|t| < \epsilon}$ be a local variation in $\Omega$. Then
\beq \label{FirstVarPer}
P(f_t(E); \Omega) = P(E;\Omega) + t \int_{\partial^* E} {\rm div}_ET d\HH^{n-1} + O(t^2),
\eeq
where $T$ is the initial velocity of $\{f_t\}_{|t| < \epsilon}$ and ${\rm div}_E T: \partial^* E \to \R$, defined by
\begin{equation} \label{Def:BoundaryDiv}
{\rm div}_ET(\x) := {\rm div}T - \nu_E(\x) \cdot \nabla T(\x) \nabla_E(\x), \ \x \in \partial^*E,
\end{equation}
is a Borel function called the \emph{boundary divergence of $T$ on $E$}.
\end{theorem}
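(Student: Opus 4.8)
The plan is to reduce the computation to a change of variables in the area formula for the $(n-1)$-dimensional measure of $\partial^* E$, exploiting the structure theorem \eqref{structureThm} which identifies $P(f_t(E);\Omega)$ with $\HH^{n-1}(\partial^*(f_t(E))\cap\Omega)$. First I would record the elementary fact that, since $f_t$ is a diffeomorphism of $\R^n$, one has $\partial^*(f_t(E)) = f_t(\partial^* E)$ (up to $\HH^{n-1}$-null sets), and $f_t(\partial^* E)\cap\Omega$ agrees with $f_t(\partial^* E\cap\Omega)$ because $f_t$ fixes points outside a compact subset of $\Omega$. Hence $P(f_t(E);\Omega) = \HH^{n-1}(f_t(\partial^* E\cap\Omega))$, and the problem becomes: differentiate at $t=0$ the area of the image of the rectifiable set $\partial^* E$ under $f_t$.

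Next I would invoke the area formula for Lipschitz (indeed smooth) maps restricted to a countably $(n-1)$-rectifiable set $M=\partial^* E$: for each $t$,
\[
\HH^{n-1}(f_t(M)\cap\Omega) = \int_{M} J^M f_t(\x)\, d\HH^{n-1}(\x),
\]
where $J^M f_t(\x)$ is the tangential Jacobian of $f_t$ along the approximate tangent plane $T_\x M = \nu_E(\x)^\perp$. Using the Taylor expansion \eqref{TaylorLocalVar}, $\nabla f_t = \mathrm{Id} + t\nabla T + O(t^2)$, I would compute the tangential Jacobian to first order. Writing the tangential Jacobian as $\sqrt{\det\big((\nabla f_t)^\top(\nabla f_t)\big|_{T_\x M}\big)}$ and expanding, the linear term is the trace of $\nabla T$ restricted to $T_\x M$, i.e.
\[
J^M f_t(\x) = 1 + t\,\mathrm{tr}\big(\nabla T(\x)\big|_{\nu_E(\x)^\perp}\big) + O(t^2) = 1 + t\big(\mathrm{div}\,T(\x) - \nu_E(\x)\cdot\nabla T(\x)\,\nu_E(\x)\big) + O(t^2),
\]
which is exactly $1 + t\,\mathrm{div}_E T(\x) + O(t^2)$ by the definition \eqref{Def:BoundaryDiv}. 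Integrating over $M=\partial^* E$ against $\HH^{n-1}$ and using \eqref{structureThm} again then yields \eqref{FirstVarPer}. I would also remark that the $O(t^2)$ error is uniform because $T$ and its derivatives are compactly supported and $\HH^{n-1}(\partial^* E\cap\mathrm{spt}\,T)<\infty$ by local finiteness of perimeter, so the expansion can be integrated termwise.

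The main obstacle is the careful justification of the tangential Jacobian expansion and, slightly more delicately, controlling the error term uniformly in $\x$: one needs that the second-order remainder in $J^M f_t(\x)$ is bounded by $Ct^2$ with $C$ depending only on $\|T\|_{C^2}$ and not on $\x$, which is where compact support of $T$ is used, together with the fact that only the portion of $\partial^* E$ inside $\mathrm{spt}\,T\subset\subset\Omega$ contributes. A secondary technical point is verifying $\partial^*(f_t(E)) = f_t(\partial^* E)$ up to $\HH^{n-1}$-negligible sets; this is standard (the reduced boundary transforms covariantly under diffeomorphisms, and the generalized normal pushes forward to the appropriately normalized co-normal), and I would simply cite Chapter 17 of \cite{MaggiBook}. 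Everything else is a routine linear-algebra expansion of a determinant.
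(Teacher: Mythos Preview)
Your proposal is correct and follows the standard approach, but you should be aware that the paper does not actually provide a proof of this statement. The theorem is presented in Subsection~2.1 as background material, introduced by the sentence ``for the convenience of the reader, we recall the following standard definitions and theorems, from Chapter 17 in \cite{MaggiBook}.'' No proof is given; the result is simply quoted from Maggi's book.

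That said, the argument you outline---reducing to the area formula for the $(n-1)$-rectifiable set $\partial^* E$, expanding the tangential Jacobian $J^M f_t = 1 + t\,\mathrm{div}_E T + O(t^2)$ using \eqref{TaylorLocalVar}, and integrating---is precisely the proof given in the cited reference (see Theorem~17.5 and the surrounding discussion in \cite{MaggiBook}). Your handling of the error term via compact support of $T$ and local finiteness of perimeter is also the standard justification. So while there is nothing in the paper to compare against, your sketch is faithful to the source the authors cite.
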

In the case of volume--constrained perimeter minimizers, the following holds.
\begin{theorem}[Constant Mean Curvature] \label{MeanCurvThm} Let $\Omega \subset \R^n$ be an open set and let $E_0 \subset \Omega$ be a volume--constrained perimeter minimizer in the open set $\Omega$. Then there exists $\lambda_0 \in \R$ such that
\[
\int_{\partial^* E} {\rm div}_ET d\HH^{n-1} = \lambda_0 \int_{\partial^* E} (T \cdot \nu_E) d\HH^{n-1} \quad \mbox{for all } T \in C^\infty_c(\Omega; \R^n).
\]
In particular, $E_0$ has distributional mean curvature in $\Omega$ constantly equal to $\lambda_0$, and we denote $\kappa_{E_0} := \lambda_0$.
\end{theorem}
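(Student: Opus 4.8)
The plan is to realize $\kappa_{E_0}$ as a Lagrange multiplier for the volume constraint. An arbitrary compactly supported deformation of $E_0$ changes its volume to first order; superimposing a fixed auxiliary deformation and solving for its amplitude by the implicit function theorem produces a genuinely volume--preserving family of competitors, and minimality then forces the first variation of the perimeter to vanish along that family. Bookkeeping of the two contributions delivers the stated identity.

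First I would dispose of the degenerate case $\HH^{n-1}(\partial^* E_0 \cap \Omega)=0$: then $P(E_0;\Omega)=0$ and, by the structure theorem \eqref{structureThm}, both sides of the asserted identity vanish for every $T$, so it holds with $\lambda_0:=0$. So assume $\HH^{n-1}(\partial^* E_0 \cap \Omega)>0$; then the linear functional $T\mapsto \int_{\partial^* E_0} T\cdot\nu_{E_0}\,d\HH^{n-1}$ does not vanish identically on $C^\infty_c(\Omega;\R^n)$, and I fix once and for all a field $T_1\in C^\infty_c(\Omega;\R^n)$ with $\int_{\partial^* E_0} T_1\cdot\nu_{E_0}\,d\HH^{n-1}=1$, setting $\lambda_0:=\int_{\partial^* E_0}{\rm div}_{E_0}T_1\,d\HH^{n-1}\in\R$.

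Now fix an arbitrary $T\in C^\infty_c(\Omega;\R^n)$, let $\{f^0_t\}$ and $\{f^1_s\}$ be the local variations in $\Omega$ with initial velocities $T$ and $T_1$, and set $f_{t,s}:=f^1_s\circ f^0_t$, a two--parameter family of diffeomorphisms equal to the identity outside a fixed compact $K\subset\subset\Omega$. Writing $g(t,s):=\LL(f_{t,s}(E_0))=\int_{E_0}\det\nabla_\x f_{t,s}\,d\x$, differentiation under the integral makes $g$ smooth near the origin, and the elementary first variation of volume (Gauss--Green for sets of finite perimeter) gives $g(0,0)=\LL(E_0)$, $\partial_s g(0,0)=1$ and $\partial_t g(0,0)=\int_{\partial^* E_0}T\cdot\nu_{E_0}\,d\HH^{n-1}$. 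By the implicit function theorem there is a smooth $t\mapsto\sigma(t)$ with $\sigma(0)=0$, $g(t,\sigma(t))\equiv\LL(E_0)$ for small $|t|$, and $\sigma'(0)=-\int_{\partial^* E_0}T\cdot\nu_{E_0}\,d\HH^{n-1}$. The curve $h_t:=f^1_{\sigma(t)}\circ f^0_t$ is then a local variation in $\Omega$ with initial velocity $T+\sigma'(0)T_1$; each $h_t(E_0)$ has the same volume as $E_0$, satisfies $E_0\Delta h_t(E_0)\subseteq K$, and $\LL(E_0\Delta h_t(E_0))\to 0$ as $t\to 0$, so these are admissible competitors and $t\mapsto P(h_t(E_0);\Omega)$ has a minimum at $t=0$. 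By the first variation formula \eqref{FirstVarPer} together with the linearity of the boundary divergence \eqref{Def:BoundaryDiv} in the initial velocity, this function is differentiable at $0$ with derivative $\int_{\partial^* E_0}{\rm div}_{E_0}(T+\sigma'(0)T_1)\,d\HH^{n-1}$, which must therefore be $0$. Substituting the values of $\sigma'(0)$ and $\lambda_0$ gives
\[
\int_{\partial^* E_0}{\rm div}_{E_0}T\,d\HH^{n-1}=\lambda_0\int_{\partial^* E_0}(T\cdot\nu_{E_0})\,d\HH^{n-1}
\]
for every $T$, with $\lambda_0$ independent of $T$; the final sentence of the theorem is then just the definition of distributional mean curvature, with $\kappa_{E_0}:=\lambda_0$.

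I expect the one genuinely delicate point to be legitimizing the last differentiation: the first variation formulas in the excerpt are stated for one--parameter families, so I would either upgrade them to expansions uniform over the two--parameter deformation $f_{t,s}$, or --- as above --- collapse the volume--constrained family into the single composition $h_t$ so that only \eqref{FirstVarPer} is ever invoked and the chain rule, together with the smooth dependence of $\sigma$ on $t$, does the rest. Verifying that $E_0\Delta h_t(E_0)$ stays in a fixed compact subset of $\Omega$ (so the competitors are admissible) and that the error terms are uniform for small $|t|$ is then routine.
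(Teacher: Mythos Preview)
Your argument is correct and is precisely the standard Lagrange--multiplier construction: fix an auxiliary field $T_1$ that moves volume to first order, use the implicit function theorem on $(t,s)\mapsto\LL(f^1_s\circ f^0_t(E_0))$ to build a volume--preserving one--parameter family $h_t$, and read off the identity from $\frac{d}{dt}\big|_{t=0}P(h_t(E_0);\Omega)=0$ via \eqref{FirstVarPer}. The only technical points you flag --- that $h_t$ is a genuine smooth local variation (it is, since $\sigma$ is smooth and compositions of smooth maps are smooth), that $E_0\Delta h_t(E_0)$ lies in a fixed compact and shrinks as $t\to0$ (so local minimality applies), and that the first--variation formula is invoked only for the collapsed one--parameter family --- are handled correctly.

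As for comparison: the paper does not prove this theorem at all. It is stated in the preliminaries as one of the ``standard definitions and theorems'' recalled from Chapter~17 of Maggi's book, with no proof given. Your proof is essentially the argument in that reference, so there is nothing to contrast.
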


In order to characterize the second variation for perimeter on open, regular sets, we need to introduce some preliminary tools. 
\begin{proposition} Let $\Omega \subset \R^n$ be open and let $E \subset \Omega$ be an open set such that $\partial E \cap \Omega$ is $C^2$. Then there exists an open set $\Omega'$ with $\Omega \cap \partial E \subset \Omega' \subset \Omega$ such that the signed distance function $s_E: \R^n \to \R$ of $E$,
\[
s_E(\x) :=
\begin{cases}
\ \ {\rm dist}(\x,\partial E) &\text{if } \x \in \R^n \setminus E, \\
-{\rm dist}(\x,\partial E) &\text{if } \x \in E,
\end{cases}
\]
satisfies $s_E \in C^2(\Omega')$.
\end{proposition}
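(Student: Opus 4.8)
\emph{Proof proposal.} The plan is to reduce to a local statement near each point of $\Omega \cap \partial E$, construct there a tubular neighborhood by the inverse function theorem, and patch. Fix $\x_0 \in \Omega \cap \partial E$. Since $\partial E \cap \Omega$ is a $C^2$ hypersurface and $E$ is open, there is an open neighborhood $V \subset \Omega$ of $\x_0$ in which $\partial E$ is an embedded $C^2$ graph and the outer unit normal field $\nu_E$ is well defined and of class $C^1$ on $\partial E \cap V$. The first step is to introduce the normal map $\Psi(\y,t) := \y + t\,\nu_E(\y)$, defined for $\y \in \partial E \cap V$ and $|t| < \e$. This map is $C^1$, and computing its differential at $(\x_0,0)$ — it is the identity on the tangent space $T_{\x_0}(\partial E)$ and sends $\partial_t$ to $\nu_E(\x_0)$ — shows that $D\Psi(\x_0,0)$ has full rank. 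By the inverse function theorem, $\Psi$ restricts to a $C^1$ diffeomorphism from a neighborhood of $(\x_0,0)$ onto an open neighborhood $U_{\x_0} \subset \Omega$ of $\x_0$.

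The second step is to identify $s_E$ with a coordinate of $\Psi^{-1}$. After possibly shrinking $U_{\x_0}$, the standard tubular-neighborhood argument shows that every $\x \in U_{\x_0}$ has a unique nearest point $\pi(\x) \in \partial E$, that this point lies on the normal line through $\x$, that $\pi(U_{\x_0}) \subset \partial E \cap V$, and consequently that $\Psi^{-1}(\x) = (\pi(\x), s_E(\x))$, the sign of the second component agreeing with the definition of $s_E$ precisely because $E$ is open. In particular $\pi \in C^1(U_{\x_0}; \R^n)$ and $s_E \in C^1(U_{\x_0})$.

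The key step — and the only place where one must do better than a naive derivative count, since $\nu_E$ is only $C^1$ and the construction so far yields only $s_E \in C^1$ — is the bootstrap from $C^1$ to $C^2$. Here I would invoke the eikonal identity $\nabla s_E(\x) = \nu_E(\pi(\x))$ for $\x \in U_{\x_0}$, which follows from $|\x - \pi(\x)| = |s_E(\x)|$ together with the fact that $\pi(\x)$ is a critical point of $\y \mapsto |\x - \y|^2$ on $\partial E$ (equivalently, from $|\nabla s_E| \equiv 1$ and $\nabla s_E$ being orthogonal to the level sets of $s_E$). Since $\nu_E \in C^1(\partial E \cap V)$ and $\pi \in C^1(U_{\x_0})$, the composition $\nu_E \circ \pi$ is $C^1$; hence $\nabla s_E \in C^1(U_{\x_0})$, that is, $s_E \in C^2(U_{\x_0})$.

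Finally, set $\Omega' := \bigcup_{\x_0 \in \Omega \cap \partial E} U_{\x_0}$. This is an open set with $\Omega \cap \partial E \subset \Omega' \subset \Omega$, and on $\Omega'$ the function $s_E$ is locally of class $C^2$; since the local representations all coincide with the single function $s_E$, we conclude $s_E \in C^2(\Omega')$. The main obstacle is precisely the $C^1 \to C^2$ promotion in the third step: the tubular-neighborhood construction by itself only delivers $C^1$ regularity, and one must exploit the special structure of the distance function — the identity $\nabla s_E = \nu_E \circ \pi$ — to recover the optimal regularity. (This is the content of Foote's theorem on the regularity of the distance function, which one could alternatively cite directly.)
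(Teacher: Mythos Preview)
Your argument is correct and is essentially the standard proof (this is indeed Foote's theorem, as you note). However, you should be aware that the paper does not prove this proposition at all: it appears in a subsection whose opening sentence reads ``for the convenience of the reader, we recall the following standard definitions and theorems, from Chapter 17 in \cite{MaggiBook},'' and the proposition is simply stated without proof and attributed to that reference. So there is no proof in the paper to compare against; you have supplied a genuine proof where the paper only gives a citation. Your identification of the delicate point --- that the inverse function theorem alone yields only $s_E \in C^1$, and that the bootstrap to $C^2$ requires the structural identity $\nabla s_E = \nu_E \circ \pi$ --- is exactly right and is the heart of the matter.
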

The previous result allows us to define a vector field $N_E \in C^1(\Omega'; \R^n)$ and a tensor field $A_E \in C^0(\Omega'; {\rm {\bf Sym}}(n))$ via
\beq \label{tripleX}
N_E := \nabla s_E, \quad A_E := \nabla^2s_E \ \mbox{ on } \Omega'.
\eeq
In particular, one can show that for every $\x \in \Omega \cap \partial E$ there exist $r > 0$, vector fields $\{ \tau_h \}_{h=1}^{n-1} \subset C^1(B_r(\x); S^{n-1})$, and functions $\{ \kappa_h \}_{h=1}^{n-1} \subset C^0(B_r(\x))$, such that $\{ \tau_h \}_{h=1}^{n-1}$ is an orthonormal basis of $T_{\y} \partial E$ for every $\y \in B_r(\x) \cap \partial E$, $\{ \tau_h \}_{h=1}^{n-1} \cup \{  N_E(\y) \}$ is an orthonormal basis of $\R^n$ for every $\y \in B_r(\x)$, and 
\[
A_E(\y) = \sum_{h = 1}^{n-1} \kappa_h(\y) \tau_h(\y) \otimes \tau_h(\y) \ \mbox{ for all } \y \in B_r(\x). 
\] 

\begin{definition} \label{FundForm}
Let $\Omega \subset \R^n$ be open and let $E \subset \Omega$ be an open set such that $\partial E \cap \Omega$ is $C^2$. For any $\y \in B_r(\x) \cap \partial E$, $A_E(\y)$ (seen as symmetric tensor on $T_{\y} \partial E \otimes T_{\y} \partial E$) is called \emph{second fundamental form of $\partial E$ at $\y$}, while $\{ \tau_h \}_{h=1}^{n-1} \subset S^{n-1} \cap T_y \partial E$ and $\{ \kappa_h \}_{h=1}^{n-1}$ are denoted the \emph{principal directions} and the \emph{principal curvatures} of $\partial E$ at $\y$.
\end{definition}

We recall that for any matrix $\mathfrak{M}$ the \emph{Frobenius norm}, which we will write $|\mathfrak{M}|$, is given by
\begin{equation} \label{Def:Frob}
|\mathfrak{M}| := \sqrt{\sum_i \sum_j |\mathfrak{M}_{ij}|^2}
\end{equation}

\begin{proposition} \label{propfund}
Let $\Omega \subset \R^n$ be open and let $E \subset \Omega$ be an open set such that $\partial E \cap \Omega$ is $C^2$.
The \emph{scalar mean curvature} $\kappa_E$ of the $C^2$--hypersurface $\Omega \cap \partial E$ is locally representable as
\[
\kappa_E(\y) = \sum_{h = 1}^{n-1} \kappa_h(\y) \ \mbox{ for all } \y \in B_r(\x) \cap \partial E,
\]
while the second fundamental form satisfies
\[
|A_E(\y)|^2 = \sum_{h = 1}^{n-1} \left( \kappa_h(\y) \right)^2 \ \mbox{ for all } \y \in B_r(\x) \cap \partial E.
\]
\end{proposition}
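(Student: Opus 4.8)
The plan is purely linear-algebraic: both identities are obtained by reading off the trace and the Frobenius norm of the symmetric tensor $A_E(\y)$ from its spectral representation $A_E(\y) = \sum_{h=1}^{n-1}\kappa_h(\y)\,\tau_h(\y)\otimes\tau_h(\y)$ recalled just above (valid for $\y\in B_r(\x)$), once the definition of the scalar mean curvature is unwound. Recall that, with $N_E = \nabla s_E$ and $A_E = \nabla^2 s_E$, the scalar mean curvature $\kappa_E$ of the $C^2$--hypersurface $\Omega\cap\partial E$ is, as in Chapter 17 of \cite{MaggiBook}, the tangential divergence of the unit normal $N_E$ along $\partial E$ --- equivalently $\Delta s_E$ restricted to $\partial E$ --- that is, the trace of the shape operator $A_E|_{T_\y\partial E}$.

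First I would fix $\y\in B_r(\x)\cap\partial E$ and work in the orthonormal basis $\{\tau_1(\y),\dots,\tau_{n-1}(\y),N_E(\y)\}$ of $\R^n$. From the representation, $A_E(\y)\tau_k(\y) = \sum_{h=1}^{n-1}\kappa_h(\y)\,(\tau_h(\y)\cdot\tau_k(\y))\,\tau_h(\y) = \kappa_k(\y)\,\tau_k(\y)$ for $k=1,\dots,n-1$, while $A_E(\y)N_E(\y)=0$ since each $\tau_h(\y)$ is orthogonal to $N_E(\y)$ (this is also consistent with the eikonal identity $|\nabla s_E|\equiv 1$ on $\Omega'$, which gives $\nabla^2 s_E\,\nabla s_E = 0$). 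Hence in this basis $A_E(\y)$ is the diagonal matrix with diagonal entries $\kappa_1(\y),\dots,\kappa_{n-1}(\y),0$, so its trace --- whether taken over $T_\y\partial E$ or over all of $\R^n$, which agree because $A_E(\y)N_E(\y)=0$ --- equals $\sum_{h=1}^{n-1}\kappa_h(\y)$; by the definition of $\kappa_E$ this is $\kappa_E(\y)$, proving the first identity. (One may also argue without bases: $\kappa_E(\y) = \sum_{h=1}^{n-1}\tau_h(\y)\cdot A_E(\y)\tau_h(\y) = \sum_{h=1}^{n-1}\kappa_h(\y)$ directly.)

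For the second identity, since the Frobenius norm $|\cdot|$ of \eqref{Def:Frob} is invariant under conjugation by orthogonal matrices, $|A_E(\y)|^2$ equals the sum of the squares of the entries of the diagonal matrix found above, namely $\sum_{h=1}^{n-1}\kappa_h(\y)^2$. Equivalently, since $A_E(\y)$ is symmetric, $|A_E(\y)|^2 = {\rm tr}(A_E(\y)^2)$, and expanding
\[
A_E(\y)^2 = \sum_{h,k=1}^{n-1}\kappa_h(\y)\kappa_k(\y)\,(\tau_h(\y)\cdot\tau_k(\y))\,\tau_h(\y)\otimes\tau_k(\y) = \sum_{h=1}^{n-1}\kappa_h(\y)^2\,\tau_h(\y)\otimes\tau_h(\y)
\]
by orthonormality of $\{\tau_h(\y)\}_{h=1}^{n-1}$ yields ${\rm tr}(A_E(\y)^2) = \sum_{h=1}^{n-1}\kappa_h(\y)^2$, using ${\rm tr}(\tau_h(\y)\otimes\tau_h(\y)) = |\tau_h(\y)|^2 = 1$.

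There is no substantial obstacle: the whole content is the already-recalled fact that $A_E$ is diagonalized by the principal directions with the principal curvatures as eigenvalues. The only points needing a line of care are to match the exact definition of the scalar mean curvature used in \cite{MaggiBook} (so that $\kappa_E$ is the full trace of the shape operator, not a normalized mean of the $\kappa_h$) and to observe that the ambient and tangential traces coincide because $A_E$ annihilates the normal direction.
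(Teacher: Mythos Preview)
Your argument is correct: both identities are immediate consequences of the spectral representation $A_E(\y)=\sum_{h=1}^{n-1}\kappa_h(\y)\,\tau_h(\y)\otimes\tau_h(\y)$, and your care in matching the definition of $\kappa_E$ as the (unnormalized) trace of the shape operator and in noting $A_E(\y)N_E(\y)=0$ is exactly what is needed. The paper itself does not supply a proof of this proposition; it is stated there as a standard fact recalled from Chapter~17 of \cite{MaggiBook}, so your write-up in fact goes beyond what the paper provides.
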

We are now in the position to state the following theorem.
\begin{theorem}[Second Variation of Perimeter] \label{SecondVarThm}
Let $\Omega \subset \R^n$ be open, let $E \subset \Omega$ be an open set such that $\partial E \cap \Omega$ is $C^2$, $\zeta \in C^\infty_c(\Omega)$, and let $\{ f_t  \}_{|t| < \epsilon}$ be a local variation associated with the normal vector field $T = \zeta N_E \in C^1_c(\Omega; \R^n)$. Then
\[
\frac{d^2}{dt^2} \Big\vert_{t=0} P(f_t(E); \Omega) = \int_{\partial E} |\nabla_E \zeta|^2 + \left( \kappa^2_E - |A_E|^2   \right) \zeta^2 d\HH^{n-1},
\]
where $\nabla_E \zeta := \nabla \zeta - (\nu_E \cdot \nabla \zeta) \nu_E$ denotes the tangential gradient of $\zeta$ with respect to the boundary of $E$.

% In particular, if $E$ is a perimeter minimizer in $\Omega$, then
%\beq \label{criterion}
%\int_{\partial E} |\nabla_E \zeta|^2 - |A_E|^2 \zeta^2 d\HH^{n-1} \geq 0
%\eeq
%for every $\zeta \in C^\infty_c(\Omega)$.
\end{theorem}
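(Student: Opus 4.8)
\emph{Proof proposal.} The plan is to reduce the computation, via localization and the area formula, to a second-order Taylor expansion of the area element of a normal graph over the hypersurface $\partial E\cap\Omega$. Since $T=\zeta N_E$ is supported in a compact set $K\subset\subset\Omega$ and $f_t$ is a diffeomorphism of $\R^n$ equal to the identity off $K$, we have $f_t(\Omega)=\Omega$ and $\partial(f_t(E))\cap\Omega=f_t(\partial E\cap\Omega)$, with $f_t$ moving only points of $\partial E\cap K$; by \eqref{structureThm} and the $C^2$ regularity of $\partial E\cap\Omega$ this gives $P(f_t(E);\Omega)=\HH^{n-1}(f_t(\partial E)\cap\Omega)$. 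Covering the compact set $\partial E\cap K$ by finitely many balls $B_{r_i}(\x_i)$ carrying the orthonormal frames $\{\tau_h\}$ and principal curvatures $\{\kappa_h\}$ of Proposition \ref{propfund}, and subordinating a partition of unity, it suffices to Taylor-expand $\HH^{n-1}(f_t(\partial E\cap B_{r_i}(\x_i)))$ in $t$ on each piece. It is enough to carry out the computation for the canonical local variation $f_t(\x)=\x+tT(\x)=\x+t\zeta(\x)N_E(\x)$: for $|t|$ small this is a genuine local variation with initial velocity $T$, and $f_t(\partial E)$ is then exactly the normal graph $\{\y+t\zeta(\y)N_E(\y):\y\in\partial E\}$, with no higher-order-in-$t$ correction to the height.

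Next I would compute the area element of a normal graph $\Phi_u(\y)=\y+u(\y)N_E(\y)$ of a $C^1$ height $u$. Differentiating along $\partial E$ and using $N_E=\nabla s_E$, $A_E=\nabla^2 s_E$ (see \eqref{tripleX}) together with $A_EN_E=\nabla^2 s_E\,\nabla s_E=\tfrac12\nabla|\nabla s_E|^2=0$, one gets, on $T_\y\partial E$,
\[
d\Phi_u=(\mathrm{Id}+uA_E)\big|_{T_\y\partial E}+N_E\otimes\nabla_E u ,
\]
so the pulled-back metric is $g_u=(\mathrm{Id}+uA_E)^2+\nabla_E u\otimes\nabla_E u$. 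By the matrix-determinant lemma and $A_E=\sum_h\kappa_h\,\tau_h\otimes\tau_h$,
\[
\sqrt{\det g_u}=\prod_{h=1}^{n-1}(1+\kappa_h u)\,\Big(1+\sum_{h=1}^{n-1}\frac{(\partial_{\tau_h}u)^2}{(1+\kappa_h u)^2}\Big)^{1/2},
\]
with coefficients continuous on $B_{r_i}(\x_i)\cap\partial E$. Expanding to second order in $(u,\nabla_E u)$ and using Proposition \ref{propfund} and \eqref{Def:Frob} in the forms $\sum_h\kappa_h=\kappa_E$, $\sum_h\kappa_h^2=|A_E|^2$, hence $2\sum_{h<k}\kappa_h\kappa_k=\kappa_E^2-|A_E|^2$, gives
\[
\sqrt{\det g_u}=1+\kappa_E u+\tfrac12|\nabla_E u|^2+\tfrac12\big(\kappa_E^2-|A_E|^2\big)u^2+R(\y,u,\nabla_E u),\qquad |R|\le C(|u|+|\nabla_E u|)^3
\]
uniformly on the compact pieces.

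Setting $u=t\zeta$, integrating over $\partial E$ and summing the partition of unity yields
\[
P(f_t(E);\Omega)=P(E;\Omega)+t\!\int_{\partial E}\!\kappa_E\zeta\,d\HH^{n-1}+t^2\!\int_{\partial E}\!\Big(\tfrac12|\nabla_E\zeta|^2+\tfrac12(\kappa_E^2-|A_E|^2)\zeta^2\Big)d\HH^{n-1}+O(t^3).
\]
The $t$-term reproduces the first variation formula \eqref{FirstVarPer}, since ${\rm div}_E(\zeta N_E)=\zeta\kappa_E$ by \eqref{Def:BoundaryDiv} (again using $A_EN_E=0$ and $\mathrm{tr}A_E=\kappa_E$ on $\partial E$); reading off twice the coefficient of $t^2$ gives the claimed identity for $\tfrac{d^2}{dt^2}\big|_{t=0}P(f_t(E);\Omega)$.

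Finally, I would record the dependence on the variation: for a general local variation with the same initial velocity $T=\zeta N_E$ and acceleration $Z=\partial_t^2 f_t|_{t=0}$, writing $f_t(\partial E)$ as a normal graph of height $u=t\zeta+\tfrac{t^2}{2}(Z\cdot\nu_E)+O(t^3)$ over $\partial E$ (the tangential part of $Z$ only reparametrizes), the expansion above picks up the extra second-order term $\int_{\partial E}\kappa_E(Z\cdot\nu_E)\,d\HH^{n-1}=\int_{\partial E}{\rm div}_E Z\,d\HH^{n-1}$, i.e.\ the first variation of perimeter in the direction $Z$, which vanishes for the canonical choice $Z=0$. The main technical points — and the only places requiring care — are the uniform control of the remainder $R$ given only $C^0$ regularity of $A_E$ (so the expansion coefficients are merely continuous, which suffices since $\partial E\cap K$ is compact), and checking that $\y\mapsto\y+t\zeta(\y)N_E(\y)$ is a $C^1$ embedding of $\partial E\cap B_{r_i}(\x_i)$ for $|t|$ small so that the area formula applies on each piece; both follow readily once the local frame of Proposition \ref{propfund} is fixed.
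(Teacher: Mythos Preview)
The paper does not actually prove this theorem: it is stated in Section~2.1 as one of several standard results recalled ``for the convenience of the reader'' from Chapter~17 of \cite{MaggiBook}, with no accompanying proof. Consequently there is no ``paper's own proof'' to compare against.

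That said, your argument is a correct and standard derivation of the second variation formula via the area element of a normal graph. The key computation --- writing $d\Phi_u=(\mathrm{Id}+uA_E)+N_E\otimes\nabla_E u$ on $T_\y\partial E$, applying the matrix-determinant lemma, and Taylor-expanding $\sqrt{\det g_u}$ to second order --- is exactly how this is done in Maggi's book (and in Simon, Giusti, etc.), so in spirit your approach matches the cited source. Your observation that $2\sum_{h<k}\kappa_h\kappa_k=\kappa_E^2-|A_E|^2$ is the crucial algebraic identity, and your treatment of the remainder and of the embedding property for small $|t|$ is adequate.

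One point worth flagging: as you correctly note in your final paragraph, for a \emph{general} local variation with initial velocity $T=\zeta N_E$ the second derivative of perimeter also contains the first-variation term $\int_{\partial E}{\rm div}_E Z\,d\HH^{n-1}$ in the acceleration $Z=\partial_t^2 f_t|_{t=0}$. The theorem as stated in the paper suppresses this term, which is consistent either with reading ``associated with'' to mean the canonical variation $f_t=\mathrm{Id}+tT$ (your choice, for which $Z=0$), or with the intended application to constant-mean-curvature surfaces under volume-preserving variations (where this extra term drops out after combining with the volume constraint). Your handling of this is careful and correct; just be aware that the displayed identity is literally valid only for the specific variation you chose, not for every variation with that initial velocity.
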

%
%Since we will be dealing with constant mean curvature surfaces, it is worth noting that, in that case, the stability criterion \eqref{criterion} turns out to be (see e.g. \cite{BarbosaDoCarmo})
%\beq \label{criterion2}
%\int_{\partial E} |\nabla_E \zeta|^2 - |A_E|^2 \zeta^2 d\HH^{n-1} \geq 0 \mbox{ for every $\zeta \in C^\infty_c(\Omega)$ with $\int_{\partial E} \zeta d\HH^{n-1} = 0$.}
%\eeq
We will say that $E$ has \emph{positive second variation} if
\beq  \label{secondVariation}
\frac{d^2}{dt^2} \Big\vert_{t=0} P(f_t(E); \Omega) > 0
\eeq
for every local variation $\{ f_t  \}_{|t| < \epsilon}$.

We conclude this section with the following version of the divergence theorem, see, e.g., \cite{MaggiBook}, Theorem 11.8 and equation 11.14.
\begin{theorem} \label{divthm}
Let $M \subset \R^n$ be a $C^2$--hypersurface with boundary $\Gamma$. Then there exists a normal vector field $\mathbf{H}_M \in C(M; \R^n)$ to $M$ and a normal vector field $\nu^M_\Gamma \in C^1(\Gamma; S^{n-1})$ to $\Gamma$ such that for every $T \in C^1_c(\R^n; \R^n)$
\[
\int_M {\rm div}_MT d \HH^{n-1} = \int_M T \cdot \mathbf{H}_M d \HH^{n-1} + \int_\Gamma (T \cdot \nu^M_\Gamma) d \HH^{n-2}, 
\]
where $\mathbf{H}_M$ is the \emph{mean curvature vector} to $M$ and ${\rm div}_MT$ is the \emph{tangential divergence} of $T$ on $M$, defined by
\beq \label{tangentialDiv}
{\rm div}_MT := {\rm div} T - (\nabla T \nu_M) \cdot \nu_M = {\rm trace}(\nabla^M T),
\eeq
with $\nu_M: M \to S^{n-1}$ being any unit normal vector field to $M$.
\end{theorem}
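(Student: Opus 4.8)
The statement is a classical result from geometric measure theory, so the plan is essentially to reduce to the well-known divergence theorem on a compact manifold with boundary and then track the geometric meaning of the two boundary terms. The key conceptual point is that for a $C^2$ hypersurface $M \subset \R^n$ the tangential divergence ${\rm div}_M T = {\rm trace}(\nabla^M T)$ splits, after decomposing $T$ into its tangential and normal components $T = T^\top + (T\cdot\nu_M)\nu_M$, into a genuine intrinsic divergence of $T^\top$ plus a term involving the derivative of $\nu_M$, and the latter is exactly $-(T\cdot\nu_M)\,{\rm trace}(\nabla^M\nu_M) = (T\cdot\nu_M)(\mathbf H_M\cdot\nu_M)$ by the definition of the mean curvature vector. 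I would first establish this pointwise identity on $M$:
\[
{\rm div}_M T = {\rm div}_{M}(T^\top) + T\cdot \mathbf H_M ,
\]
using only the product rule for the tangential gradient and the fact that $\nabla^M\nu_M$ is a symmetric endomorphism of the tangent space (the shape operator), so that its trace contracted against $\nu_M$ recovers the scalar mean curvature.

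Next I would integrate this identity over $M$. The term $\int_M T\cdot\mathbf H_M\,d\HH^{n-1}$ is already in the desired form, so it remains to show
\[
\int_M {\rm div}_M(T^\top)\,d\HH^{n-1} = \int_\Gamma (T\cdot \nu^M_\Gamma)\,d\HH^{n-2}.
\]
For this I would invoke the intrinsic (Riemannian) divergence theorem on the $C^2$ Riemannian manifold-with-boundary $(M,\Gamma)$ applied to the tangent vector field $T^\top$: the boundary integral picks up the component of $T^\top$ along the outward conormal $\nu^M_\Gamma$ (the unit vector tangent to $M$, normal to $\Gamma$, pointing out of $M$). Since $\nu^M_\Gamma$ is tangent to $M$, we have $T\cdot\nu^M_\Gamma = T^\top\cdot\nu^M_\Gamma$, so the two boundary integrands agree. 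Because $T\in C^1_c(\R^n;\R^n)$ and $M$ is $C^2$, all integrands are continuous and compactly supported on $M$ respectively $\Gamma$, so no approximation or truncation argument is needed; one may also pass through a partition of unity subordinate to coordinate charts of $M$ if one prefers to derive the intrinsic divergence theorem from scratch via the fundamental theorem of calculus in each chart.

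The regularity needed for the normal fields $\mathbf H_M\in C(M;\R^n)$ and $\nu^M_\Gamma\in C^1(\Gamma;S^{n-1})$ is automatic from $M$ being $C^2$ (so the shape operator, hence $\mathbf H_M$, is $C^0$) and $\Gamma$ being $C^2$. The main (and only) genuine obstacle is the careful bookkeeping of the splitting $T = T^\top + (T\cdot\nu_M)\nu_M$ and the identification of $-{\rm trace}(\nabla^M\nu_M)\nu_M$ with the mean curvature vector with the correct sign convention; once the pointwise identity above is in hand, the rest is the standard Stokes/divergence theorem on a smooth manifold with boundary. I would cite \cite{MaggiBook}, Theorem 11.8, for the statement in precisely this form rather than reproving the intrinsic divergence theorem in detail.
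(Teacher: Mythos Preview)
Your proposal is correct, and in fact goes further than the paper itself: the paper does not prove Theorem \ref{divthm} at all, but simply states it as a known result with a reference to \cite{MaggiBook}, Theorem 11.8 and equation 11.14. Your sketch (split $T = T^\top + (T\cdot\nu_M)\nu_M$, identify the normal contribution with $T\cdot\mathbf{H}_M$, and apply the intrinsic divergence theorem on $(M,\Gamma)$ to $T^\top$) is the standard argument and is exactly what lies behind the cited reference, so your final suggestion to cite \cite{MaggiBook} rather than reprove it aligns precisely with what the paper does.
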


\subsection{Regularity of Solutions and Gradient Flows}
We start by recalling results about the regularity of solutions of \eqref{NLAC} and \eqref{CHivp}, respectively. In the case of the nonlocal Allen--Cahn equation, we follow \cite{Nam}: assume that $\Omega$ and $W$ satisfy \eqref{dom}--\eqref{w4} and let $s_1 < s_2$ be two arbitrarily chosen constants such that
\[
W'(s_2) < W'(s) < W'(s_1),  
\]
for all $s \in (s_1, s_2)$. Furthermore, assume that the initial data $u_{0,\e}$ in \eqref{NLAC} satisfy
\beq \label{hyp}
u_{0,\e} \in L^2(\Omega) \text{ and } s_1 \leq u_{0,\e} \leq s_2 \text{ a.e. in } \Omega,
\eeq
and set
\[
\Omega_T := \Omega \times (0,T).
\]
Then the following holds.
\begin{theorem}[\cite{Nam}, Theorem 1.1.1] \label{nlacregular}
Fix $\e > 0$, let $\Omega, W, m$ satisfy hypotheses \eqref{dom}--\eqref{mass}, $n \geq 2$ and assume that \eqref{hyp} holds. Then the problem \eqref{NLAC} admits a solution $u_\e \in C([0,\infty); L^2(\Omega))$ which satisfies, for every $T > 0$,
\[
u_\e \in L^{\infty}(\Omega_T) \cap L^2(0;T; H^1(\Omega)) \text{ and } \partial_t u_\e \in L^2(0,T; (H^1(\Omega))').
\]
Moreover, $u_\e \in C^\infty(\overline{\Omega} \times (0,\infty))$,
\[
s_1 \leq u_\e(x,t) \leq s_2 \mbox{ for all } x \in \overline{\Omega} \text{ and all }t > 0.
\]
\end{theorem}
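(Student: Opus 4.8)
The plan is to combine the gradient--flow structure of \eqref{NLAC} with parabolic regularity theory, in four steps: a Galerkin construction, uniform a priori bounds and passage to the limit, the invariant--region estimate $s_1\le u_\e\le s_2$, and a bootstrap for smoothness. Fix $\e>0$. Since only the range $[s_1,s_2]$ ultimately matters, I would first replace $W$ by $\widetilde W\in C^2(\R)$ coinciding with $W$ on a neighbourhood of $[s_1,s_2]$ and with $\widetilde W'$ globally Lipschitz and linearly bounded, so that $-\widetilde W'(u)$ and the multiplier $u\mapsto\tfrac{1}{\e\LL(\Omega)}\int_\Omega\widetilde W'(u)\dx$ are Lipschitz on $L^2(\Omega)$. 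Let $\{\phi_k\}_{k\ge0}$ be an $L^2(\Omega)$--orthonormal basis of Neumann eigenfunctions of $-\Delta$, with $\phi_0$ constant, and let $u^N(t)=\sum_{k=0}^N c_k(t)\phi_k$ solve the Galerkin projection of the truncated equation with $u^N(0)$ the projection of $u_{0,\e}$; local solvability is Cauchy--Lipschitz. Testing against $\phi_0$ gives $\tfrac{d}{dt}\int_\Omega u^N\dx=0$ --- the multiplier being defined exactly so that the reaction term has zero mean --- so the mass constraint is preserved, and testing against $\partial_t u^N$ (admissible, as it lies in the Galerkin space) gives the energy--dissipation identity $\tfrac{d}{dt}\,\E_\e[u^N]+\tfrac{1}{\e}\|\partial_t u^N\|_{L^2(\Omega)}^2=0$, the multiplier dropping out against the mean--zero $\partial_t u^N$.

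Since $u_{0,\e}$ is only assumed in $L^2$, I would extract the uniform bounds from the $L^2$--energy estimate (test against $u^N$): the linear growth of $\widetilde W'$ gives $\tfrac{d}{dt}\|u^N\|_{L^2}^2+2\e^2\|\nabla u^N\|_{L^2}^2\le C(1+\|u^N\|_{L^2}^2)$, whence, by Gronwall, $\{u^N\}$ is bounded in $L^\infty(0,T;L^2(\Omega))\cap L^2(0,T;H^1(\Omega))$ and $\{\partial_t u^N\}$ in $L^2(0,T;(H^1(\Omega))')$, uniformly in $N$ and for every $T$. By the Aubin--Lions--Simon lemma, along a subsequence $u^N\to u_\e$ strongly in $L^2(0,T;L^2(\Omega))$ and a.e.\ in $\Omega_T$; the only nonlinear term, $\widetilde W'(u^N)$, then converges in $L^2(\Omega_T)$, so $u_\e$ is a weak solution belonging to $C([0,\infty);L^2(\Omega))\cap L^2(0,T;H^1(\Omega))$ with $\partial_t u_\e\in L^2(0,T;(H^1(\Omega))')$, and the mass constraint passes to the limit.

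The \emph{crux} is the invariant--region bound $s_1\le u_\e\le s_2$, which is delicate precisely because the nonlocal forcing $\e\lambda_\e(t)=\tfrac{1}{\LL(\Omega)}\int_\Omega W'(u_\e(\cdot,t))\dx$ couples all of $\Omega$, so the scalar maximum principle does not apply directly. I would first prove the bound for smooth approximating data $u_{0,\e}^{(j)}\to u_{0,\e}$ in $L^2$ with $s_1\le u_{0,\e}^{(j)}\le s_2$: then $u_\e^{(j)}$ is classical on $\overline\Omega\times[0,\infty)$, and for the Lipschitz functions $M(t):=\max_{\overline\Omega}u_\e^{(j)}(\cdot,t)$, $m(t):=\min_{\overline\Omega}u_\e^{(j)}(\cdot,t)$ --- whose extrema, by the Neumann condition and Hopf's lemma, are not strictly produced at $\partial\Omega$ --- one has for a.e.\ $t$ the differential inequalities $M'(t)\le -W'(M(t))+\e\lambda_\e(t)$ and $m'(t)\ge -W'(m(t))+\e\lambda_\e(t)$. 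The purpose of the conditions on $s_1<s_2$ together with \eqref{hyp} is exactly that, as long as $s_1\le m(t)$ and $M(t)\le s_2$, the average $\e\lambda_\e(t)$ stays between $W'(s_1)$ and $W'(s_2)$ --- these being the extreme values of $W'$ over $[s_1,s_2]$ --- so that $s_1$ and $s_2$ are a sub-- and a super--solution of the inequalities above; an ODE comparison (clean, $W'$ being locally Lipschitz), together with $s_1\le u_{0,\e}^{(j)}\le s_2$, traps $(m,M)$ in $[s_1,s_2]^2$ for all time. Letting $j\to\infty$ via continuous dependence in $L^2$ (a Gronwall estimate on the difference of two solutions, legitimate because all iterates stay where $\widetilde W'$ and $\lambda_\e$ are Lipschitz) yields $s_1\le u_\e\le s_2$ a.e. In particular $u_\e$ never leaves $\{\widetilde W=W\}$, so it solves the original problem \eqref{NLAC}, and $u_\e\in L^\infty(\Omega_T)$ with the asserted pointwise bounds.

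Finally, on $(0,\infty)$ I would read \eqref{NLAC} as the linear Neumann heat equation $\partial_t u_\e-\e^2\Delta u_\e=f$ with $f:=-W'(u_\e)+\e\lambda_\e\in L^\infty(\Omega_T)$: interior and up--to--the--boundary $L^p$ parabolic estimates give $u_\e\in W^{2,1}_p(\Omega\times(\tau,T))$ for all $p<\infty$ and $0<\tau<T$, hence $u_\e\in C^{1+\alpha,(1+\alpha)/2}(\overline\Omega\times[\tau,T])$; then $f$ is parabolically Hölder continuous, so Schauder estimates promote $u_\e$ to $C^{2+\alpha,1+\alpha/2}$, and iterating --- using the smoothness of $W$ on $[s_1,s_2]$, the regularity $\partial\Omega\in C^{4,\sigma}$, and the compatibility supplied at each order by the Neumann condition --- yields $u_\e\in C^\infty(\overline\Omega\times(0,\infty))$. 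Uniqueness, if desired, follows from a Gronwall estimate for the $L^2$ norm of the difference of two solutions using the Lipschitz bounds on $W'$ and $\lambda_\e$ over the invariant region. I expect the third step to be the genuine obstacle: the nonlocal Lagrange multiplier breaks the naive comparison principle, and recovering a comparison argument --- here, for the coupled pair $(\min u_\e,\max u_\e)$ --- is exactly where the structural conditions on $W$ and the choice of $s_1,s_2$ are used.
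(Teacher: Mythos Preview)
The paper does not prove this theorem: it is quoted from \cite{Nam} (Theorem~1.1.1 there) as an external regularity result for the nonlocal Allen--Cahn equation, and no argument is given in the paper itself. There is therefore nothing in the paper to compare your proposal against.

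That said, your four-step outline --- Galerkin approximation with a truncated nonlinearity, Aubin--Lions compactness, the invariant-region argument for the coupled pair $(\min u_\e,\max u_\e)$, and a parabolic Schauder bootstrap --- is the standard route to a result of this type and is essentially sound. You correctly isolate the invariant-region step as the genuine difficulty: the nonlocal multiplier $\e\lambda_\e(t)$ defeats a direct scalar comparison, and the structural hypothesis on $s_1,s_2$ is precisely what traps $\e\lambda_\e(t)$ between $W'(s_1)$ and $W'(s_2)$ so that $s_1$ and $s_2$ become barriers for the ODE system governing $(m,M)$. (Note that for this to yield $M'\le 0$ at $M=s_2$ and $m'\ge 0$ at $m=s_1$ one needs $W'(s_1)\le W'(s)\le W'(s_2)$ on $[s_1,s_2]$; the inequality displayed in the paper just before \eqref{hyp} appears to be reversed.) One further caveat: under the paper's standing assumption \eqref{w1} the potential is only $C^2$, so your Schauder iteration terminates at finite regularity rather than producing $C^\infty$; the $C^\infty$ conclusion in the statement presumably reflects a stronger smoothness hypothesis in \cite{Nam}.
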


The variational approach we will follow throughout this paper relies on the concept of gradient flow of a given energy. In the case of the nonlocal Allen--Cahn equation, we notice that integrating \eqref{NLAC} with respect to $\x$ gives
\beq \label{massPreservedIdentity}
\begin{aligned}
0 &= \frac{d}{dt} \int_{\Omega} u_\e \dx  - \int_{\Omega} \left( -\e \Delta u_\e + \frac{1}{\e}W'(u_\e ) - \la_\e \right) \dx  \\
&= \frac{d}{dt} \int_{\Omega} u_\e  \dx  - \int_{\Omega} \left( \frac{1}{\e}W'(u_\e) - \la_\e \right) \dx  = \frac{d}{dt} \int_{\Omega} u_\e  \dx ,
\end{aligned}
\eeq
where we have used the Neumann boundary conditions, see \eqref{NLAC}. In other words, \eqref{massPreservedIdentity} is highlighting the fact that solutions of the nonlocal Allen--Cahn equation preserve the volume, thanks to the presence of the Lagrange multiplier $\lambda_\e$.
Moreover, the regularity results of Theorem \ref{nlacregular} allow us to remark that multiplying the nonlocal Allen--Cahn equation by $\partial_t u$ and integrating by parts, using boundary conditions and volume preservation \eqref{massPreservedIdentity}, gives
\beq \label{energyIdentity}
\E_{\e}[u_{\e}](0) - \E_{\e}[u_{\e}](T) = \e^{-1} \int_0^T ||\partial_t u_{\e}(s) ||^2_{L^2} \ds,
\eeq
for any $T > 0$, which is precisely what we mean when we say that \eqref{NLAC} has \emph{gradient flow structure}. It is very important to recall that our energy \eqref{energyprecise} is slightly different from the \emph{unconstrained} version of the energy that is used in \cite{BronsardKohn}, \cite{BellettiniNovagaNayam}, as those works consider the classical Allen--Cahn equation \eqref{ACintro}. \\

In the case of the Cahn--Hilliard equation, we define the space
\[
H^2_N(\Omega) := \{ w \in H^2(\Omega): \nu_{\partial \Omega} \cdot \nabla v = 0 \mbox{ on } \partial \Omega  \},
\]
where $\nu_{\partial \Omega}$ denotes the exterior normal to the boundary of $\Omega$. The following regularity result was proved in \cite{ElliottSongmu}.
\begin{theorem} \label{chreg1}
Fix $\e > 0$, let $\Omega, W, m$ satisfy hypotheses \eqref{dom}--\eqref{mass}, for $n \leq 2$ and assume that $u_{0,\e} \in H^2_N(\Omega)$. Then for any $T > 0$ there exists a unique global solution $u_\e$ of \eqref{CHivp} such that
\[
u_\e \in H^{4,1}(\Omega_T).
\]
\end{theorem}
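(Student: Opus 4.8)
The plan is to treat the eliminated form of \eqref{CHivp}, namely $\partial_t u_\e = -\e^2\Delta^2 u_\e + \Delta W'(u_\e)$ with the boundary conditions $\partial_\nu u_\e = \partial_\nu\Delta u_\e = 0$ on $\partial\Omega$ (the second following from $\partial_\nu u_\e = \partial_\nu v_\e = 0$), as a semilinear fourth-order parabolic equation, and to combine analytic-semigroup local existence with global-in-time a priori estimates coming from the gradient-flow structure. First I would record that the operator $A := \e^2\Delta^2$ with these Neumann-type boundary conditions, acting on $L^2(\Omega)$ with domain $D(A) = \{w\in H^4(\Omega): \partial_\nu w = \partial_\nu\Delta w = 0\ \text{on}\ \partial\Omega\}$, is self-adjoint and nonnegative (it factors as $\e^2(-\Delta_N)^2$ through the Neumann Laplacian), generates an analytic semigroup, and enjoys maximal $L^2$-regularity: if $\partial_t w + Aw = g\in L^2(\Omega_T)$ with $w(0)\in D(A^{1/2}) = H^2_N(\Omega)$, then $w\in L^2(0,T;H^4(\Omega))\cap H^1(0,T;L^2(\Omega)) = H^{4,1}(\Omega_T)$. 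Treating $g = \Delta W'(u_\e)$ as the forcing term and running a contraction-mapping argument in a ball of $C([0,\tau];H^2_N(\Omega))\cap H^{4,1}(\Omega_\tau)$ — where, since $n\le 2$, the embedding $H^2(\Omega)\hookrightarrow C^0(\overline\Omega)$ makes $u\mapsto W''(u)\Delta u$ locally Lipschitz into $L^2(\Omega)$ — produces a unique strong solution on a maximal interval $[0,T_{\max})$, with $u_{0,\e}\in H^2_N(\Omega)$ being exactly the admissible initial regularity.

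Next I would establish the a priori bounds that promote the solution to a global one. Integrating the equation and using the Neumann conditions gives conservation of mass, $\int_\Omega u_\e(\cdot,t)\dx = m$. Testing the equation by $v_\e$ and integrating by parts yields the energy identity $\frac{d}{dt}\E_\e[u_\e(t)] = -\e^{-1}\|\nabla v_\e(t)\|_{L^2}^2\le 0$, hence $\E_\e[u_\e(t)]\le\E_\e[u_{0,\e}]$ for all $t$, which with the coercivity of $W$ gives $u_\e\in L^\infty(0,\infty;H^1(\Omega))$; combined with mass conservation and the Poincar\'e inequality applied to $v_\e$ (whose mean is $-\LL(\Omega)^{-1}\int_\Omega W'(u_\e)\dx$, bounded through $H^1\hookrightarrow L^p$ in dimension $n\le 2$), this gives $v_\e\in L^2(0,T;H^1(\Omega))$ for every $T$. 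Since $\e^2\Delta u_\e = v_\e + W'(u_\e)$, elliptic regularity upgrades $u_\e$ to $L^2(0,T;H^2(\Omega))$; differentiating and using the Ladyzhenskaya-type interpolation $\|\nabla u_\e\|_{L^4}^2\lesssim\|\nabla u_\e\|_{L^2}\|u_\e\|_{H^2}$ (valid in two dimensions) then puts $\Delta W'(u_\e) = W''(u_\e)\Delta u_\e + W'''(u_\e)|\nabla u_\e|^2$ in $L^2(\Omega_T)$, and feeding this into the maximal-regularity estimate for $A$ gives $u_\e\in H^{4,1}(\Omega_T)$ together with a bound that does not degenerate as $T\to T_{\max}$, so $T_{\max} = \infty$.

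Uniqueness would follow from a routine energy estimate on the difference $w = u_\e^{(1)} - u_\e^{(2)}$ of two solutions: $w$ has zero mean, so one may test the difference of the equations against $(-\Delta_N)^{-1}w$, use that $W'$ is Lipschitz on the (bounded) range of the solutions, and close with Gronwall's inequality to get $w\equiv 0$.

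The main obstacle is the bootstrap to the full $H^{4,1}$ regularity: one must bound $\Delta W'(u_\e)$, whose worst term $W'''(u_\e)|\nabla u_\e|^2$ requires $\nabla u_\e\in L^4(\Omega_T)$ with a little room to spare, and this is exactly where the restriction $n\le 2$ is used, via the two-dimensional interpolation inequality above. (The polynomial growth and smoothness of $W$ needed to make this composition estimate rigorous are those imposed in \cite{ElliottSongmu}; under the weaker hypothesis \eqref{w1} one can either assume the additional regularity of $W$ as in that reference or argue with a regularized potential and pass to the limit.)
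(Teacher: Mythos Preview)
The paper does not give its own proof of this theorem: it is stated as a citation of \cite{ElliottSongmu} (see the sentence immediately preceding the statement, ``The following regularity result was proved in \cite{ElliottSongmu}''), so there is no in-paper argument to compare against. Your proposal is essentially a sketch of the standard Elliott--Zheng argument --- local existence via the analytic semigroup generated by the bi-Neumann biharmonic operator, global continuation from the energy identity, and the $H^{4,1}$ bootstrap using the two-dimensional Gagliardo--Nirenberg inequality to control $W'''(u)|\nabla u|^2$ in $L^2$ --- and is correct in outline. You have also correctly flagged the one genuine discrepancy: the paper's standing hypothesis \eqref{w1} only gives $W\in C^2$, whereas the bootstrap as written needs $W'''$ (and in fact \cite{ElliottSongmu} assumes a polynomial double-well); the paper silently inherits this extra assumption by citing the result rather than reproving it.
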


The previous result was improved in \cite{Temam1} (see also Chapter 4 in \cite{PDEbook}).

\begin{theorem} \label{chreg2}
Fix $\e > 0$, let $\Omega, W, m$ satisfy hypotheses \eqref{dom}--\eqref{mass}, for $n \leq 3$, and assume that $u_{0,\e} \in L^2(\Omega)$. Then there exists a unique solution $u_\e$ to \eqref{CHivp} such that for all $T > 0$
\[
u_\e \in C([0;T]; L^2(\Omega)) \cap L^2(0,T; H^1(\Omega)) \cap L^4(0,T; L^4(\Omega)).
\]
Moreover, if $u_{0,\e} \in H^2_N(\Omega)$, then for all $T > 0$,
\[
u_\e \in C([0,T]; H^2_N(\Omega)) \cap L^2(0,T; \mathcal{D}(\mathcal{A}^2)),
\]
where $\mathcal D(\cdot)$ stands for the domain of a given operator, while $\mathcal A$ is the Laplacian with Neumann boundary conditions.
\end{theorem}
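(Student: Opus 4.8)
The statement is classical (see \cite{Temam1}, and also Chapter~4 of \cite{PDEbook}), so the plan is to run the standard Galerkin--energy--compactness scheme for semilinear fourth--order parabolic equations. Since $\int_\Omega u_\e\dx$ is conserved by \eqref{CHivp}, one works on the affine slice of functions with mass $m$; writing $\mathcal{A}:=-\Delta$ with homogeneous Neumann conditions (a positive self--adjoint operator on the mean--zero subspace of $L^2(\Omega)$, with $\mathcal{D}(\mathcal{A})=H^2_N(\Omega)$) and eliminating $v_\e$, the system \eqref{CHivp} becomes
\[
\partial_t u_\e + \e^2\mathcal{A}^2 u_\e = -\,\mathcal{A}\big(W'(u_\e)\big), \qquad u_\e(0)=u_{0,\e},
\]
where $-\e^2\mathcal{A}^2$ generates an analytic semigroup on $L^2(\Omega)$.

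First I would set up a Galerkin approximation $u^N_\e$ in the eigenbasis of $\mathcal{A}$ (which also preserves the mean, the constants being the zero eigenfunction); the projected problem is a locally Lipschitz ODE system, since $W'\in C^2$. The crux is then the derivation of $N$--uniform a priori bounds. Testing the equation against $\mathcal{A}^{-1}(u^N_\e-m)$, i.e.\ in the $X_2$ inner product, gives the gradient--flow identity
\[
\tfrac12\,\tfrac{d}{dt}\|u^N_\e-m\|_{X_2}^2 + \e^2\|\nabla u^N_\e\|_{L^2}^2 = -\int_\Omega W'(u^N_\e)\,(u^N_\e-m)\dx \;\le\; C,
\]
the bound using that \eqref{w1}--\eqref{w4} force $W'(s)(s-m)\ge 0$ for $|s|$ large; since $L^2(\Omega)\hookrightarrow X_2$ this yields $u^N_\e$ bounded in $L^\infty(0,T;X_2)\cap L^2(0,T;H^1(\Omega))$. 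Testing instead against $u^N_\e$ itself and integrating by parts (using $\partial_\nu u^N_\e=0$) gives $\tfrac12\tfrac{d}{dt}\|u^N_\e\|_{L^2}^2+\e^2\|\Delta u^N_\e\|_{L^2}^2=-\int_\Omega W''(u^N_\e)|\nabla u^N_\e|^2\dx$; for $n\le 3$ one estimates the right--hand side using $H^1\hookrightarrow L^6$ and $H^2\hookrightarrow L^\infty$ together with the quartic growth of $W$, bootstrapping off the previous bound, absorbs it into the $\|\Delta u^N_\e\|_{L^2}^2$ term, and closes a Gronwall inequality. This makes $u^N_\e$ bounded in $L^\infty(0,T;L^2)\cap L^2(0,T;H^2_N)$, hence in $L^4(0,T;L^4)$ by Gagliardo--Nirenberg interpolation, and $\partial_t u^N_\e$ bounded in $L^2(0,T;(H^2_N)')$ directly from the equation.

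Next, by Aubin--Lions the family is precompact in $L^2(\Omega_T)$ (and in $C([0,T];(H^2_N)')$), so along a subsequence $u^N_\e\to u_\e$ strongly in $L^2(\Omega_T)$ and a.e., which, with the uniform bounds and continuity of $W'$, lets one pass to the limit in $W'(u^N_\e)$ and obtain a solution $u_\e$ with the claimed integrability; $u_\e\in C([0,T];L^2(\Omega))$ then follows from $u_\e\in L^2(0,T;H^2_N)$ and $\partial_t u_\e\in L^2(0,T;(H^2_N)')$ via the Lions--Magenes lemma. Uniqueness I would prove by testing the equation for the difference of two solutions in the $X_2$ inner product, using that $W'$ is Lipschitz on the bounded set where the solutions live, followed by Gronwall. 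For the improved regularity with $u_{0,\e}\in H^2_N(\Omega)$ I would argue through the mild formulation $u_\e(t)=e^{-\e^2\mathcal{A}^2 t}u_{0,\e}-\int_0^t e^{-\e^2\mathcal{A}^2(t-s)}\,\mathcal{A}W'(u_\e(s))\,ds$: analytic--semigroup smoothing estimates on the fractional--power scale of $\mathcal{A}$, together with the $L^\infty$ bound from $H^2\hookrightarrow L^\infty$ ($n\le 3$), give $u_\e\in C([0,T];H^2_N)$, and a further energy estimate at the level of $\mathcal{A}u_\e$ (or differentiating the equation in time and repeating the previous step) gives $u_\e\in L^2(0,T;\mathcal{D}(\mathcal{A}^2))$.

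The step I expect to be the main obstacle is the a priori estimate for the $L^2$/$H^2$ bound: because $W''$ is sign--indefinite (by \eqref{w3} it is negative near $c$), there is no one--sided bound on $W''$, and the term $\int_\Omega W''(u^N_\e)|\nabla u^N_\e|^2\dx$, as well as the bootstrap up to $\mathcal{D}(\mathcal{A}^2)$, must be closed using the precise growth of $W$ and the Sobolev embeddings that are available only in the subcritical regime $n\le 3$ --- which is exactly the origin of the dimensional restriction in the statement.
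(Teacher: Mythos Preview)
The paper does not supply its own proof of this theorem: it is stated as a known result, with references to \cite{Temam1} and Chapter~4 of \cite{PDEbook}. Your outline is precisely the classical Galerkin--energy--compactness scheme developed in those sources, so you are reconstructing the cited argument rather than departing from anything the paper does.

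One point worth tightening: at the step where you close the $L^2$/$H^2$ estimate you appeal to ``the quartic growth of $W$'', but the hypotheses \eqref{w1}--\eqref{w4} in the paper impose no upper polynomial bound on $W$ or $W'$; \eqref{w4} is only a coercivity condition at infinity. The Temam argument for $n\le 3$ genuinely needs a subcritical growth assumption of the type $|W'(s)|\le C(1+|s|^3)$ (equivalently $|W''(s)|\le C(1+|s|^2)$) so that $W'(u)\in H^1$ whenever $u\in H^2$ and the term $\int_\Omega W''(u)|\nabla u|^2\dx$ can be absorbed after interpolation. The paper is implicitly relying on this when quoting the result (its model potential is $W(s)=\tfrac14(s^2-1)^2$), so in a self-contained proof you should add this growth hypothesis explicitly rather than derive it from \eqref{w1}--\eqref{w4}.
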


Furthermore, \eqref{CHintro} can be seen as the gradient flow with respect to a variant of $(H^1(\Omega))'$ of the energy $\E_{\e}$. To be  precise, the following approach is standard in studying the Cahn--Hilliard equation (see, e.g., \cite{NamLe}): let $\langle \ , \ \rangle$ denote the dual pairing between $(H^1(\Omega))'$ and $H^1(\Omega)$, and recall that for every $f \in (H^1(\Omega))'$ there is a $g \in H^1(\Omega) $ such that
\[
 \langle f, \varphi \rangle = \int_{\Omega} \nabla g \cdot \nabla \varphi \dx  \text{ for all } \varphi \in H^1(\Omega).
\]
As the function $g$ is unique, up to an additive constant, we denote by $-\Delta^{-1}_{X_2} f$ the function $g$ with $0$ mean over $\Omega$. We then define the inner product
\[
\langle u , v \rangle_{X_2} := \int_{\Omega} \nabla (\Delta^{-1}_{X_2} u) \cdot \nabla( \Delta^{-1}_{X_2} v  ) \dx  \quad \text{ for } u,v \in (H^1(\Omega))',
\]
so that $X_2 := ((H^1(\Omega))', \langle \ , \  \rangle_{X_2})$ is a Hilbert space. After rescaling time by $\e$, one can see that
\[
\partial_t u_{\e} = - \nabla_{X_2} \E_{\e}(u_{\e}),
\]
where
%\nabla_{L^2(\Omega)} E_{\e}(u) = -\e \Delta u + \e^{-1} W'(u), \quad 
\[
\nabla_{X_2} \E_{\e}(u) = -\Delta( -\e^2 \Delta u + W'(u)).
\]
In particular, in this case we have
\beq \label{energyIdentityCH}
\E_{\e}[u_{\e}](0) - \E_{\e}[u_{\e}](T) = \e^{-1} \int_0^T ||\partial_t u_{\e}(s) ||^2_{X_2} \ds.
\eeq

\section{Energy Estimates and Slow Motion}
This section is devoted to the study of the motion of solutions for both the nonlocal Allen--Cahn equation \eqref{NLAC} and the Cahn--Hilliard equation \eqref{CHivp}. We start by proving Theorem \ref{globalMotionIntro} and Theorem \ref{globalMotion}, and subsequently we study solutions of the nonlocal Allen--Cahn equation whose initial data is close to a configuration that locally minimizes the perimeter of the interface, by proving Theorem \ref{mainresult}. In the latter, we make use of a new local version of the well--known isoperimetric function, whose regularity properties will be investigated in the next section.

\subsection{Slow Motion Near Global Perimeter Minimizers}
Due to the fact that the same strategy of proof holds for both  Theorem \ref{globalMotionIntro} and Theorem \ref{globalMotion}, we will follow the convention that $|| \cdot ||_X$ stands for the $L^2$ norm in the case of the nonlocal Allen--Cahn equation, Theorem \ref{globalMotionIntro} (so that in this case $X = L^2$), while $X = X_2$ in the case of Theorem \ref{globalMotion}.

\begin{proof}[Proof of Theorem \ref{globalMotionIntro} and Theorem \ref{globalMotion}]
Fix $\e >0$, let $M>0$ and let $t \in [0, \e^{-1}M]$. By properties of the Bochner integral (see, e.g., \cite{BrezisSemigroup}, \cite{DiestelUhl}) and H\"older's inequality
\beq \label{simpler}
\begin{aligned}
||u_{\e}(t) - u_{E_0}||_X &\leq ||u_{\e}(t) - u_{0,\e} ||_X + ||u_{0,\e}  - u_{E_0}||_X \\
&\leq \int_0^t ||\partial_s u_{\e}(s) ||_X \ds + ||u_{0,\e}  - u_{E_0}||_X \\
&\leq t^{1/2} \left(  \int_0^t ||\partial_s u_{\e}(s) ||^2_X \ds \right)^{1/2} + ||u_{0,\e}  - u_{E_0}||_X.
\end{aligned}
\eeq
Since $u_\e(t) \in L^1(\Omega)$ for all $t \geq 0$ (see Theorems \ref{nlacregular}, \ref{chreg1}, \ref{chreg2}), we apply Theorem \ref{LMMainResult} and use the gradient flow structure \eqref{energyIdentity} and \eqref{energyIdentityCH} to obtain
\beq \label{gradientflow}
\begin{aligned}
\int_0^{t} ||\partial_s u_{\e}(s) ||^2_X \ds &= \e \E_{\e}[u_{\e}](0) - \e \E_{\e}[u_{\e}](t) \\
&\leq \e \E_0[u_{E_0} ] +C\e^2 - \e \E_0[u_{E_0} ] + C(\kappa)\e^2 \leq  C \e^2,
\end{aligned}
\eeq
where we have used \eqref{energyUpper} and \eqref{energyUpperCH}. In turn, by \eqref{simpler} and \eqref{gradientflow}
\[
\begin{aligned}
||u_{\e}(t) - u_{E_0}||_X &\leq C \e t^{1/2} + ||u_{0,\e}  - u_{E_0}||_X \\
&\leq C\e^{1/2} + ||u_{0,\e}  - u_{E_0}||_X.
\end{aligned}
\]
Taking the supremum over all $t \in [0,\e^{-1}M]$ on both sides, followed by a limit as $\e \to 0^+$, and using \eqref{newL1conv} in the Allen--Cahn case, or \eqref{newL1conv2} in the Cahn--Hilliard one, gives the desired result.
\end{proof}

\begin{remark}
It follows from the previous proof that
\[
\lim_{\e \to 0^+} \sup_{0 < t \leq g(\e)} ||u_\e(t) - u_{E_0} ||_X = 0
\]
for every decreasing function $g: (0,\infty) \to (0,\infty)$  with
\[
\lim_{s \to 0^+} s^2 g(s) = 0.
\]
In particular, we can take $g(s) := s^{\delta - 2}$, where $\delta > 0$. This is also true when we later study slow motion near local perimeter minimizers.
\end{remark}

\subsection{Slow Motion Near Local Perimeter Minimizers}

The goal of this section is to prove Theorem \ref{mainresult}. In order to do so, we need to introduce some tools and prove the key energy estimate Theorem \ref{mainEnergyEstimate}. Throughout this section we will assume that $\Omega \subset \R^n$ is as in Section 2 (see \eqref{dom}) and that $E_0$ is a volume--constrained local perimeter minimizer with $\LL(E_0) = r_0$, see Definition \ref{LocalPerimeterMinimizer}. Moreover, we will assume that $\Iloc$ admits a Taylor expantion of order $2$ as in \eqref{TaylorExpansion2}, at $r_0$, for some $\delta > 0$. We remark that Theorem \ref{differentiableBall} and Theorem \ref{RegularitySecondVariation} are two cases where we will prove the validity of the last assumption, as long as $\delta$ is sufficiently small (see Section 4). For simplicity, we write $\Idelta$ in place of $\Iloc$. \\

By Proposition 3.1 in \cite{LeoniMurray}, we may select a function $\mathcal{I}^* \in C_{\text{loc}}^{1,\varsigma}((0,1))$ satisfying
\begin{align}
&\mathcal{I}^*(r_0) = \Idelta(r_0), \\
&0 \leq \mathcal{I}^*(r) \leq \Idelta(r) \ \text{for all } r \in (0,1), \label{firstProperty} \\ 
&\mathcal{I}^*(r) \geq \min \left\{ Cr^{\frac{n-1}{n}}, C(1-r)^{\frac{n-1}{n}} \right\} \text{ for all } r \in (0,1), \label{mazya}
\end{align}
for some $C > 0$, where $\varsigma$ is given in \eqref{TaylorExpansion2}.

After extending $\mathcal{I}^*$ to be zero outside of $(0,1)$, we define the function $V_\Omega$ via the initial value problem
\[
\begin{cases}
\dfrac{d}{ds}V_\Omega(s) = \mathcal{I}^*(V_\Omega(s)),\\
\quad \ V_\Omega(0) = \dfrac{1}{2}.
 \end{cases}
\]

\begin{remark} \label{t1t2}
Using \eqref{mazya}, and as $0<\frac{n-1}{n}<1$, a straightforward argument gives that there exist $S_1, S_2 > 0$ finite, such that $V_\Omega(s) \in (0,1)$ for all $s \in (-S_1, S_2)$ and $V_\Omega(s) \notin (0,1)$ otherwise.
\end{remark}

\begin{definition} \label{rearrangementsDef}
Let $u \in L^1(\Omega)$. For $s \in \R$ we denote
\[
\eta(s) := \mathcal{I}^*(V_\Omega(s)), \quad \varrho(s) := \LL(\{u < s\}),
\]
and define the \emph{increasing rearrangement} of $u$ by
\[
f_u(s) := \sup \{z : \varrho(z) < V_\Omega(s)\}.
\]
\end{definition}

We remark that our definitions of $\varrho$ and $f_u$ differ from \cite{LeoniMurray}, and from other standard sources on rearrangements, in the direction of our inequalities. In particular, we are choosing to construct an increasing rearrangement, as opposed to a decreasing one. In the case where $\eta$ is symmetric there is no difference between using an increasing or decreasing rearrangement (see \cite{LeoniMurray} Remark 3.11). Since $\Iloc$ is not symmetric in general, in our case $\eta$ may not be symmetric either. However, the arguments for the increasing rearrangement do not differ from the decreasing one in our case (see Remark 3.11 in \cite{LeoniMurray}).

\begin{definition}
Let $I \subset \R$ be an open, bounded interval and consider the function $\eta$ in Definition \ref{rearrangementsDef}. We denote the weighted spaces with weight $\eta$ as
\[
L_\eta^1(I) := L^1(I;\eta), \quad H_\eta^1(I) := H^1(I;\eta),
\]
endowed with the norms
\[
\|u\|_{L_\eta^1} = \int_I |u(s)| \eta(s) \ds, \quad \|u\|_{H_\eta^1} = \left(\int_I u(s)^2\eta(s) \ds \right)^{1/2} + \left( \int_I u'(s)^2 \eta(s) \ds \right)^{1/2},
\]
respectively. 
\end{definition}

\iffalse
$L_\eta^1$ to be $L^1((-S_1,S_2);\eta)$, and $H_\eta^1$ to be $H^1((-S_1,S_2);\eta)$, endowed with the norms 
\[
\|u\|_{L_\eta^1} = \int_{-S_1}^{S_2} |u(s)| \eta(s) \ds, \quad \|u\|_{H_\eta^1} = \left(\int_{-S_1}^{S_2} u(s)^2\eta(s) \ds \right)^{1/2} + \left( \int_{-S_1}^{S_2} u'(s)^2 \eta(s) \ds \right)^{1/2}
\fi
We give the following auxiliary result.

\begin{lemma} \label{pz}
Assume that $\Omega, W, m$ satisfy hypotheses \eqref{dom}--\eqref{mass}, and let $E_0 \subset \Omega$ be a volume--constrained local perimeter minimizer. Let $\psi: \R \to \R$ be a Borel function, $u \in L^1(\Omega)$, $S_1, S_2$ be as in Remark \ref{t1t2}. Fix $\delta > 0$ and suppose that $\Iloc$ admits a Taylor expansion of order two at $r_0$ as in \eqref{TaylorExpansion2}.
Then
\beq \label{equazioneSopra}
\int_\Omega \psi(u(\x)) \dx = \int_{-S_1}^{S_2} \psi(f_u(s)) \eta(s) \ds,
\eeq
provided the integral on the right hand side of \eqref{equazioneSopra} is well--defined. Moreover, 
\beq \label{contraction}
\int_\Omega |u(\x)-w(\x)| \dx  \geq \int_{-S_1}^{S_2} | f_u(s) - f_w(s) | \eta(s) \ds 
\eeq
for all $w \in L^1(\Omega)$. Furthermore, if $u \in W^{1,p}(\Omega)$ for some $1 \leq p < \infty$ and $\|u-u_{E_0}\|_{L^1} \leq 2\delta$, then 
\begin{equation}\label{pz0}
\int_\Omega |\nabla u(\x)|^p \dx  \geq \int_{-S_1}^{S_2} |f_u'(s)|^p \eta (s)\ds
\end{equation}
In particular, it follows that if  $\|u-u_{E_0}\|_{L^1} \leq 2\delta$ then 
\beq \label{pz1}
\E_\e[u] \geq \int_{-S_1}^{S_2} (\e^{-1}W(f_u(s)) + \e \left(f_u'(s)\right)^2) \eta(s) \ds.
\eeq
\end{lemma}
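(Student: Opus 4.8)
\textbf{Proof plan for Lemma \ref{pz}.}

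The plan is to exploit the layer-cake / coarea structure of the functionals, reducing every quantity in the lemma to an integral against the distribution function $\varrho(s) = \LL(\{u<s\})$, and then changing variables through the flow map $V_\Omega$. First I would establish \eqref{equazioneSopra}. The key observation is that $f_u$ is, by Definition \ref{rearrangementsDef}, essentially the generalized inverse of $s\mapsto \varrho(s)$ composed with $V_\Omega$; concretely, $f_u(s)\le z \iff V_\Omega(s)\le\varrho(z)$ up to boundary values. Hence for a Borel $\psi$ the pushforward of $\LL\restriction\Omega$ under $u$ equals the pushforward of $\eta(s)\ds = \mathcal{I}^*(V_\Omega(s))\ds = V_\Omega'(s)\ds$ under $f_u$: indeed $\{s\in(-S_1,S_2): f_u(s)<z\}$ has $\eta$-measure $\int_{\{f_u<z\}}V_\Omega'(s)\ds = V_\Omega(\sup\{s:f_u(s)<z\}) - V_\Omega(-S_1) = \varrho(z) - 0 = \LL(\{u<z\})$, using $V_\Omega(-S_1)=0$, $V_\Omega(S_2)=1=\LL(\Omega)$ from Remark \ref{t1t2}. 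So $f_u$ is a measure-preserving map between $((-S_1,S_2),\eta\,ds)$ and $(\Omega,\LL)$ at the level of distribution functions, which is exactly \eqref{equazioneSopra}; this is the standard equimeasurability of a monotone rearrangement, and I expect it to go through cleanly by approximating $\psi$ by simple functions.

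Next, \eqref{contraction}: this is the $L^1$-contraction (non-expansivity) of the rearrangement map. I would argue it via the identity $\int_\Omega|u-w|\dx = \int_{\R}\LL(\{u<s\}\triangle\{w<s\})\,??$ — more precisely, using $\int_\Omega|u-w|\dx = \int_{\R}\LL\big((\{u<s\}\setminus\{w<s\})\cup(\{w<s\}\setminus\{u<s\})\big)\ds$ (layer-cake for the difference) and the pointwise inequality $|f_u(s)-f_w(s)|\le$ (something controlled by how the level sets of $u$ and $w$ differ). The cleanest route is to note that $f_u,f_w$ are the monotone rearrangements with respect to the \emph{same} reference measure $\eta\,ds$, and monotone rearrangement onto a fixed measure space is an $L^1$-contraction — this is classical (cf. the references to \cite{LeoniMurray}); I would either cite Remark 3.11 / Proposition 3.1 there or reproduce the two-line Hardy–Littlewood-type argument. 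The orientation (increasing vs.\ decreasing) is immaterial here as the paper already notes.

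The substantive step is \eqref{pz0}, the Pólya–Szegő-type inequality $\int_\Omega|\nabla u|^p\dx \ge \int_{-S_1}^{S_2}|f_u'(s)|^p\eta(s)\ds$, and this is where the hypothesis $\|u-u_{E_0}\|_{L^1}\le 2\delta$ and the Taylor assumption on $\Iloc$ enter — hence I expect this to be the main obstacle. The mechanism: by the coarea formula $\int_\Omega|\nabla u|\dx = \int_\R P(\{u<s\};\Omega)\ds$ and more generally, for a.e.\ level $s$, $P(\{u<s\};\Omega)\ge \Iloc(\varrho(s))$ \emph{provided} $\{u<s\}$ is an admissible competitor in the definition \eqref{labelstar3}, i.e.\ $\alpha(E_0,\{u<s\})\le\delta$ — and this is exactly guaranteed by Proposition \ref{levelSetsProposition} together with $\|u-u_{E_0}\|_{L^1}\le 2\delta$. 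Then $\Iloc(\varrho(s)) = \Idelta(\varrho(s)) \ge \mathcal{I}^*(\varrho(s))$ by \eqref{firstProperty}. Combining with the definition of $f_u$ and the chain rule $f_u'(s)$ vs.\ $1/\varrho'(f_u(s))$ and $V_\Omega'(s)=\mathcal{I}^*(V_\Omega(s))=\eta(s)$, one gets, for $p=1$, $\int_\Omega|\nabla u| \ge \int_\R \mathcal{I}^*(\varrho(s))\ds \ge \int_{-S_1}^{S_2}|f_u'|\eta\,ds$; for general $p$ one inserts Jensen's inequality on each level set (the standard Pólya–Szegő argument: $P(\{u<s\})^p \le$ … combined with $\big(\int_{\{u=s\}}|\nabla u|\big)\big(\int_{\{u=s\}}|\nabla u|^{-1}\big)\ge P(\{u<s\})^2$ via Cauchy–Schwarz, then Hölder in $p$). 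I would follow the proof of the corresponding statement in \cite{LeoniMurray} (their Proposition 3.1 / the weighted rearrangement lemma), taking care that the only modification is replacing $\I$ by $\Iloc$ and checking admissibility of level sets via Proposition \ref{levelSetsProposition}; the delicate points are the almost-everywhere differentiability of $\varrho$ and $f_u$, handling the set where $\nabla u=0$ (flat parts), and the endpoint levels $s\le -1$, $s\ge 1$.

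Finally, \eqref{pz1} is immediate: apply \eqref{equazioneSopra} with $\psi = W$ to rewrite $\int_\Omega \e^{-1}W(u)\dx = \int_{-S_1}^{S_2}\e^{-1}W(f_u(s))\eta(s)\ds$, apply \eqref{pz0} with $p=2$ to bound $\int_\Omega \tfrac{\e}{2}|\nabla u|^2\dx \ge \tfrac{\e}{2}\int_{-S_1}^{S_2}(f_u'(s))^2\eta(s)\ds$ — wait, the stated inequality \eqref{pz1} has coefficient $\e$ rather than $\e/2$ on the gradient term, so I should either absorb the constant or note that $\E_\e$ dominates the looser functional; in any case summing the two bounds and using $\E_\e[u]=\int_\Omega(\e^{-1}W(u)+\tfrac\e2|\nabla u|^2)\dx$ (with $u\in H^1(\Omega)$, $\int u = m$, else $\E_\e[u]=\infty$ and there is nothing to prove) yields \eqref{pz1}. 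The only point to double-check is that the coefficient discrepancy is intentional (a convenient weakening) and consistent with how \eqref{pz1} is used downstream.
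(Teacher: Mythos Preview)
Your proposal is correct and follows essentially the same route as the paper: \eqref{equazioneSopra} and \eqref{contraction} are handled by citing the rearrangement results in \cite{LeoniMurray} (Lemma 3.3, Proposition 3.4), while the substantive step \eqref{pz0} is reduced to the Pólya--Szeg\H{o} argument of \cite{LeoniMurray} after using Proposition \ref{levelSetsProposition} to guarantee that every sublevel set $\{u\le s\}$ is admissible in the definition of $\Iloc$, so that $\Iloc(\varrho(s))\le P(\{u\le s\};\Omega)$ and hence $\mathcal{I}^*$ may replace $\I$ throughout. Your flag on the $\e$ versus $\e/2$ coefficient in \eqref{pz1} is a good catch; the paper's $\mathcal{F}_\e$ carries the same coefficient, so the discrepancy with the definition of $G_\e$ is a harmless normalization issue (absorbed into the constant $C(\kappa)$ downstream) rather than a gap in the argument.
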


\begin{proof}
We will only show \eqref{pz0}, since \eqref{equazioneSopra} and \eqref{contraction} follow from Lemma 3.3 and Proposition 3.4 in \cite{LeoniMurray} (see also \cite{CrandallTartar}), and \eqref{pz1} is a consequence of \eqref{equazioneSopra} and \eqref{pz0}.
By Proposition \ref{levelSetsProposition}, for any $u \in L^1(\Omega)$ satisfying $\|u - u_{E_0}\|_{L^1} \leq 2\delta$, we have
\[
\alpha(E_0, \{ u \leq s \}) \leq \delta
\]
for every $s \in \R$, (see \eqref{iso1}). In turn, by definition of $\Idelta$ (see \eqref{labelstar3}) we get
\beq \label{isoperLevelSets}
\Idelta(\LL(\{u\leq s \})) \leq P(\{u \leq s \};\Omega) \quad \text{ for } \mathcal{L}^1\text{--a.e. } s \in \R.
\eeq
In particular, since $\Omega$ has finite measure, \eqref{isoperLevelSets} holds true for any function in $W^{1,p}(\Omega)$. Since the proofs of Lemma 3.3, Proposition 3.4 and Theorem 3.10 in \cite{LeoniMurray} only rely on properties \eqref{firstProperty}--\eqref{mazya} and \eqref{isoperLevelSets}, which are shared by $\I$ and $\Idelta$, the same results hold true if we replace $\I$ with $\Idelta$. We omit the details.
\end{proof}

We consider the functional $\mathcal{F}_\e: L_\eta^1((-S_1, S_2)) \to [0,\infty]$ defined by
\[
\mathcal{F}_\e[f] := 
\begin{cases} \int_{-S_1}^{S_2} (\e^{-1}(W\circ f) + \e (f')^2) \eta \ds &\text{ if } f \in H_\eta^1((-S_1,S_2)), \ \int_{-S_1}^{S_2} (f-f_{u_{E_0}}) \eta \ds = 0, \\ 
\infty &\text{ otherwise}.
\end{cases}
\]
The following theorem is a simplified version of Theorem 4.20 from \cite{LeoniMurray}.

\begin{theorem} \label{1denergy}
Assume that $\Omega, W, m$ satisfy hypotheses \eqref{dom}--\eqref{mass}, and let $E_0$ be a volume--constrained local perimeter minimizer with $\LL(E_0) = r_0$. Fix $\delta > 0$ and suppose that $\Iloc$ admits a Taylor expansion of order two at $r_0$ as in \eqref{TaylorExpansion2}, and let
$f_0 := f_{u_{E_0}}$ be such that
\[
f_\e \to f_0 \mbox{ in } L_\eta^1((-S_1, S_2)) \mbox{ as } \e \to 0^+. 
\]
Then
\[
\mathcal{F}_\e[f_\e] \geq \E_0[u_{E_0}] - C(\kappa) \e,
\]
for $\e$ sufficiently small, where $C(\kappa)$ is a positive constant depending only on the curvature of $\partial E_0$.
\end{theorem}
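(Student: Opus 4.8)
The plan is to deduce Theorem \ref{1denergy} from Theorem \ref{LMMainResult} (or rather its localized version, which follows from Lemma \ref{pz} and the energy estimate Theorem \ref{mainEnergyEstimate}) by transferring the one-dimensional weighted functional $\mathcal F_\e$ back to an $n$-dimensional configuration. First I would observe that the statement is really a \emph{lower} bound on a sequence of one-dimensional energies, and the natural route is to build, from a near-minimizing sequence $f_\e$ of $\mathcal F_\e$, an admissible competitor $u_\e \in H^1(\Omega)$ with $\int_\Omega u_\e\dx = m$ for the original functional $\E_\e$ such that $\E_\e[u_\e] \le \mathcal F_\e[f_\e] + o(\e)$ and $u_\e \to u_{E_0}$ in $L^1(\Omega)$; then Theorem \ref{LMMainResult} (localized) immediately gives $\mathcal F_\e[f_\e] \ge \E_\e[u_\e] - o(\e) \ge \E_0[u_{E_0}] - C(\kappa)\e - o(\e)$, and the $o(\e)$ can be absorbed by adjusting the constant. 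The construction of $u_\e$ from $f_\e$ is the inverse of the rearrangement machinery in Lemma \ref{pz}: given the superlevel-set structure encoded by $V_\Omega$ and $\eta$, one sets $u_\e(\x) := f_\e(V_\Omega^{-1}(\text{appropriate profile}))$ composed with a near-optimal choice of nested sets realizing (nearly) the isoperimetric profile $\mathcal I^*$, so that \eqref{equazioneSopra} and \eqref{pz0} become near-equalities rather than inequalities.

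Alternatively — and this is probably the cleaner route given how the excerpt is organized — I would treat Theorem \ref{1denergy} as a direct citation/restatement: the authors explicitly say it is ``a simplified version of Theorem 4.20 from \cite{LeoniMurray}'', and Lemma \ref{pz} has already established that all the structural ingredients (properties \eqref{firstProperty}--\eqref{mazya} of $\mathcal I^*$ and the level-set inequality \eqref{isoperLevelSets}) hold verbatim with $\Idelta$ in place of $\I$. So the proof would proceed by: (i) noting that the hypotheses \eqref{dom}--\eqref{mass} plus the order-two Taylor expansion of $\Idelta$ at $r_0$ are exactly the hypotheses under which \cite{LeoniMurray} proves the sharp one-dimensional lower bound; (ii) checking that the functional $\mathcal F_\e$ defined here coincides (up to the harmless change from decreasing to increasing rearrangement, addressed in the remark following Definition \ref{rearrangementsDef}) with the reduced one-dimensional functional in \cite{LeoniMurray}; (iii) verifying the constraint $\int (f-f_{u_{E_0}})\eta\ds = 0$ matches the mass constraint $\int_\Omega u\dx = m$ via \eqref{equazioneSopra} applied to $\psi = \mathrm{id}$; and (iv) invoking the convergence $f_\e \to f_0$ in $L^1_\eta$ to apply the liminf inequality of that theorem, which yields $\liminf_\e(\mathcal F_\e[f_\e] - \E_0[u_{E_0}])/\e \ge -C(\kappa)$, i.e. the claimed bound for $\e$ small.

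The key analytic content behind Theorem 4.20 of \cite{LeoniMurray}, which I would at least sketch for completeness, is the following: one uses the order-two Taylor expansion \eqref{TaylorExpansion2} of $\Idelta$ to control $\mathcal I^*$ and hence $\eta(s) = \mathcal I^*(V_\Omega(s))$ near $s$ corresponding to $r_0$; the leading-order term of $\mathcal F_\e[f_\e]$ is the optimal-profile energy $\int_{-1}^{1} W^{1/2} = $ (a multiple of) $c_W$ times $\eta$ at the transition, giving $\E_0[u_{E_0}] = 2c_W \Idelta(r_0) = 2 c_W P(E_0;\Omega)$; the $-C(\kappa)\e$ correction comes from the second-order behavior of $\eta$, whose curvature at the transition is governed precisely by $\frac{d\Idelta}{dr}(r_0)$ (the mean curvature $\kappa$ of $\partial E_0$, by Theorem \ref{MeanCurvThm} and the structure of volume-constrained minimizers) — this is where the constant $C(\kappa)$ depending only on $W$, $P(E_0;\Omega)$, and $\kappa$ enters. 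The main obstacle, and the reason this is not entirely trivial, is making rigorous the claim that the localized isoperimetric function $\Idelta$ plays the same role as the global $\I$: this requires that the near-optimal competitors produced in the one-dimensional analysis, when lifted back to $\Omega$, stay within the $\alpha$-neighborhood $\alpha(E_0, E) \le \delta$ of $E_0$ that defines $\Idelta$ — but this is exactly guaranteed by Proposition \ref{levelSetsProposition} together with the hypothesis $\|u - u_{E_0}\|_{L^1} \le 2\delta$ that is implicitly in force throughout this subsection (and which appears in the hypotheses of Lemma \ref{pz} and Theorem \ref{mainEnergyEstimate}), so the circularity is avoided. Hence the proof is essentially a verification that the cited one-dimensional theorem applies with $\Idelta$ substituted for $\I$, with Lemma \ref{pz} doing the bulk of that verification.
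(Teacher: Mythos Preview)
Your second approach is correct and matches the paper exactly: Theorem \ref{1denergy} is stated without proof in the paper, introduced as ``a simplified version of Theorem 4.20 from \cite{LeoniMurray}'', and the only work required is the observation (already carried out in Lemma \ref{pz}) that the structural properties \eqref{firstProperty}--\eqref{mazya} and the level-set inequality \eqref{isoperLevelSets} hold with $\Idelta$ in place of $\I$, so the proof in \cite{LeoniMurray} applies verbatim.

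Your first approach, however, contains a genuine circularity that you should drop entirely. You propose to deduce Theorem \ref{1denergy} from ``Theorem \ref{LMMainResult} (or rather its localized version, which follows from Lemma \ref{pz} and the energy estimate Theorem \ref{mainEnergyEstimate})''. But look at the paper's proof of Theorem \ref{mainEnergyEstimate}: it combines the rearrangement inequality \eqref{pz1} with Theorem \ref{1denergy} itself. So you cannot invoke Theorem \ref{mainEnergyEstimate} to prove Theorem \ref{1denergy}. The logical flow in the paper is: cite Theorem \ref{1denergy} from \cite{LeoniMurray}, then use it to prove Theorem \ref{mainEnergyEstimate}; reversing that is not available. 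Moreover, the ``inverse rearrangement'' construction you sketch (building $u_\e$ from $f_\e$ with near-equality in \eqref{pz0}) is not straightforward: it would require a nested family of sets $\{E_s\}$ with $P(E_s;\Omega)$ achieving (or nearly achieving) $\mathcal{I}^*(\LL(E_s))$ for all $s$, which is a nontrivial construction not supplied anywhere in the paper and not needed for the actual argument.
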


We now prove our main energy estimate, Theorem \ref{mainEnergyEstimate}.

\begin{proof}[Proof of Theorem \ref{mainEnergyEstimate}] 
Thanks to \eqref{contraction}, if $u_\e \to u_{E_0}$ in $L^1(\Omega)$, then $f_{u_\e} \to f_0$ in $L^1_\eta$ and in light of \eqref{pz1},
\[
\E_\e[u_\e] \geq \mathcal{F}_\e[f_{u_\e}],
\]
for all $\e$ sufficiently small. This, combined with Theorem \ref{1denergy}, gives the desired result.
\end{proof}

The techniques we use in the remainder of this section are very similar to those found in \cite{BronsardKohn} and \cite{Grant}. We begin with the following auxiliary result.

\begin{proposition} \label{aux2}
Assume that $\Omega, W, m$ satisfy hypotheses \eqref{dom}--\eqref{mass}, and let $E_0$ be a volume--constrained local perimeter minimizer. Suppose further that $\Iloc$ admits a Taylor expansion of order two at $r_0$ as in \eqref{TaylorExpansion2}, for some $\delta >0$. Assume that $u_{0,\e} \in X_1$ satisfy
\beq \label{ii}
u_{0,\e} \to u_{E_0} \mbox{ in } L^1(\Omega) \mbox{ as } \e \to 0^+
\eeq
and
\beq \label{iii}
\E_{\e}[u_{0,\e}] \leq \E_0[u_{E_0} ] + C\e
\eeq
for some $C > 0$. Then there exist two positive constants $k_1$ and $k_2$, not depending on $\e$, such that
\beq \label{energy0}
\int_0^{k_1\e^{-2}} ||\partial_t u_{\e}(t) ||^2_{L^2} \dt \leq k_2 \e^2,
\eeq
where $u_\e$ is the solution of \eqref{NLAC}.
\end{proposition}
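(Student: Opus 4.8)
The plan is to exploit the gradient-flow structure of the nonlocal Allen--Cahn equation, exactly as encoded in the energy identity \eqref{energyIdentity}, combined with the lower bound on the energy coming from the local version of the Leoni--Murray estimate (Theorem \ref{mainEnergyEstimate}). The only subtlety compared to the global case is that Theorem \ref{mainEnergyEstimate} requires the pointwise-in-time closeness condition $\|u_\e(t)-u_{E_0}\|_{L^1} \leq 2\delta$, so the argument must first establish (or bootstrap) that the solution stays within the $2\delta$-tube on the relevant time interval before the energy lower bound can be applied. This is the step I expect to be the main obstacle, and it is presumably handled by a continuity/bootstrap argument over the time interval, exactly the kind of thing Bronsard--Kohn and Grant do in one dimension.

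First I would record that, by the energy identity \eqref{energyIdentity} (after rescaling time by $\e$ as in the gradient-flow discussion), for any $T>0$,
\[
\E_\e[u_\e](T) + \e^{-1}\int_0^T \|\partial_t u_\e(s)\|_{L^2}^2\,ds = \E_\e[u_{0,\e}] \leq \E_0[u_{E_0}] + C\e,
\]
using the well-preparedness hypothesis \eqref{iii}. Hence it suffices to obtain a \emph{lower} bound $\E_\e[u_\e](T) \geq \E_0[u_{E_0}] - C(\kappa)\e$, for then $\e^{-1}\int_0^T \|\partial_t u_\e\|_{L^2}^2\,dt \leq (C+C(\kappa))\e$, which upon multiplying by $\e$ and choosing $k_1$ so that $T = k_1\e^{-2}$ is admissible gives \eqref{energy0} with $k_2 = C + C(\kappa)$. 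So the whole game is to verify the hypothesis of Theorem \ref{mainEnergyEstimate}, namely $\|u_\e(t)-u_{E_0}\|_{L^1}\leq 2\delta$, uniformly for $t \in [0,k_1\e^{-2}]$.

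To get the $L^1$-closeness I would argue by contradiction / continuity. Fix $t$ and estimate, via the Bochner integral and Cauchy--Schwarz as in \eqref{simpler} (but now in $L^2$, then passing to $L^1$ on the bounded domain $\Omega$ with $\LL(\Omega)=1$),
\[
\|u_\e(t)-u_{E_0}\|_{L^1} \leq \|u_\e(t)-u_{E_0}\|_{L^2} \leq \|u_{0,\e}-u_{E_0}\|_{L^2} + t^{1/2}\Big(\int_0^t \|\partial_s u_\e(s)\|_{L^2}^2\,ds\Big)^{1/2}.
\]
Let $T^* := \sup\{\,t \geq 0 : \|u_\e(s)-u_{E_0}\|_{L^1}\leq 2\delta \text{ for all } s\in[0,t]\,\}$; this is positive since $u_{0,\e}\to u_{E_0}$ in $L^1$ (so for small $\e$ the initial datum is well inside the tube, using also that the $L^\infty$ bound from Theorem \ref{nlacregular} lets us upgrade $L^1$ convergence of the initial data to $L^2$ convergence by interpolation). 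On $[0,T^*]$ the energy lower bound of Theorem \ref{mainEnergyEstimate} applies, so the dissipation integral is $\leq C\e^2$ there, whence for $t\leq \min\{T^*, k_1\e^{-2}\}$ the displayed inequality gives $\|u_\e(t)-u_{E_0}\|_{L^1} \leq o(1) + (k_1\e^{-2})^{1/2}(C\e^2)^{1/2} = o(1) + C k_1^{1/2}\e$, which is strictly less than $2\delta$ for $\e$ small. This strict inequality prevents $T^*$ from being reached before $k_1\e^{-2}$, i.e. $T^* \geq k_1\e^{-2}$ for all small $\e$; hence the energy lower bound holds on all of $[0,k_1\e^{-2}]$ and the estimate \eqref{energy0} follows as above.

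The point requiring care is the interpolation step turning the hypothesis \eqref{ii} (convergence in $L^1$) into control of $\|u_{0,\e}-u_{E_0}\|_{L^2}$: this uses the uniform $L^\infty$ bound $s_1 \leq u_\e \leq s_2$ from Theorem \ref{nlacregular} together with $|u_{E_0}|\leq 1$, via $\|v\|_{L^2}^2 \leq \|v\|_{L^\infty}\|v\|_{L^1}$. One should also double-check that $C(\kappa)$ and the constant $C$ in \eqref{iii} are genuinely $\e$-independent (they are, by the statements of Theorems \ref{mainEnergyEstimate} and the hypothesis), so that $k_1, k_2$ can be chosen independent of $\e$; any $k_1>0$ works, and one simply fixes one such choice. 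Finally, note the argument is robust: shrinking $k_1$ only helps, which is what makes the subsequent slow-motion statement (Theorem \ref{mainresult}) come out on the time scale $\e^{-1}$.
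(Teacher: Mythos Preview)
Your approach is essentially the paper's: both use the gradient-flow identity \eqref{energyIdentity} together with Theorem \ref{mainEnergyEstimate}, and both close the loop by a continuity/bootstrap argument ensuring the solution stays in the $2\delta$-tube long enough for the local energy lower bound to apply. The paper phrases the bootstrap slightly differently---it picks $T_\e$ so that $\int_0^{T_\e}\|\partial_t u_\e\|_{L^1}\,dt = \delta$ and then shows $T_\e \geq \delta^2/(C\e^2)$---but this is the same mechanism as your $T^*$ version.

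There is, however, an arithmetic slip that invalidates your conclusion. You write
\[
(k_1\e^{-2})^{1/2}(C\e^2)^{1/2} = Ck_1^{1/2}\e,
\]
but in fact $(k_1\e^{-2})^{1/2}(C\e^2)^{1/2} = k_1^{1/2}\e^{-1}\cdot C^{1/2}\e = (Ck_1)^{1/2}$, which is $O(1)$, not $O(\e)$. Your bootstrap therefore only yields $\|u_\e(t)-u_{E_0}\|_{L^1} \leq o(1) + (Ck_1)^{1/2}$, and this is strictly less than $2\delta$ \emph{only if $k_1$ is chosen small enough}, say $k_1 < 4\delta^2/C$. So the assertion ``any $k_1>0$ works'' is false; $k_1$ must depend on $\delta$ and on the constants $C, C(\kappa)$. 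Once you fix $k_1$ in this way (the paper takes $k_1 = \delta^2/C$), the argument goes through and gives exactly \eqref{energy0}. A minor secondary point: the paper avoids your $L^2$ detour for the initial data by bounding $\|u_\e(T_\e)-u_{0,\e}\|_{L^1}$ directly via $\int_0^{T_\e}\|\partial_t u_\e\|_{L^1}\,dt$ and only invoking $\|\cdot\|_{L^1}\leq \|\cdot\|_{L^2}$ (since $\LL(\Omega)=1$) when applying Cauchy--Schwarz in time, which spares the interpolation step.
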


\begin{proof}
By the gradient flow structure \eqref{energyIdentity}, for any $T > 0$ we have
\beq \label{energy1}
\E_{\e}[u_{0,\e}] - \E_{\e}[u_{\e}](T) = \e^{-1} \int_0^T ||\partial_t u_{\e}(s) ||^2_{L^2} \ds,
\eeq
which shows that $t \mapsto \E_{\e}(u_{\e})(t)$ is decreasing and  $||\partial_t u_{\e}||^2_{L^2}$ is integrable. Given $\delta$ as in the assumptions, then by \eqref{ii},
\[
|| u_{0,\e} - u_{E_0} ||_{L^1} \leq \delta
\]
for $\e$ sufficiently small. Now suppose that there exists $T_\e > 0$ small enough that
\beq \label{teps}
\int_0^{T_\e}||\partial_t u_\e(t) ||_{L^1} \dt \leq \delta.
\eeq
Then,
\[
\begin{aligned}
\delta &\geq \int_0^{T_\e}||\partial_t u_\e(t) ||_{L^1} \dt \geq \left| \left| \int_0^{T_\e} \partial_t u_\e(t) \dt  \right|\right|_{L^1} = || u_{\e}(T_\e) - u_{0,\e} ||_{L^1},
\end{aligned}
\]
so that
\beq \label{deltaclose}
\begin{aligned}
|| u_{\e}(T_\e) - u_{E_0} ||_{L^1} &\leq || u_{\e}(T_\e) - u_{0,\e} ||_{L^1} + || u_{0,\e} - u_{E_0} ||_{L^1} \leq 2\delta
\end{aligned}
\eeq
and, in particular, by Theorem \ref{mainEnergyEstimate},
\beq \label{energy2}
\E_{\e}[u_{\e}](T_\e) \geq \E_0[u_{E_0}] - C(\kappa) \e.
\eeq
By \eqref{iii} and \eqref{energy2} together with \eqref{energy1},
\beq \label{energy3}
\begin{aligned}
\int_0^{T_\e} ||\partial_t u_{\e}(s) ||^2_{L^2} \ds &= \e \E_{\e}[u_{0,\e}] - \e \E_{\e}[u_{\e}](T_\e) \\
&\leq \e \E_0[u_{E_0}] +C\e^2 - \e \E_0[u_{E_0}] \leq C\e^2.
\end{aligned}
\eeq
In turn, by H\"older's inequality we get
\[
\left( \int_0^{T_\e}||\partial_t u_\e(t) ||_{L^1} \dt \right)^2 \leq  C T_\e \e^2,
\]
so that
\beq \label{genericBoundOnT}
T_\e \geq \frac{1}{C \e^2} \left( \int_0^{T_\e}||\partial_t u_\e(t) ||_{L^1} \dt \right)^2.
\eeq

%\[
%\frac{\delta}{2} \leq (T_\e)^{1/2} \left( \int_0^{T_\e} ||\partial_t u ||^2_{L^1(\Omega)} \dt  \right)^{1/2} \leq (T_\e |\Omega |)^{1/2} (2c(\kappa) \e^2)^{1/2}.
%\]
%where in the last inequality we have used \eqref{energy3} and it readily follows that
%\[
%T_\e \geq \frac{\delta^2}{8c(\kappa) |\Omega|} \e^{-2} =: F  \e^{-2}.
%\]
%Thus, \eqref{energy3} gives
%\beq \label{boundOnT}
%\int_0^{F\e^{-2}} ||\partial_t u_{\e}(\x, s) ||^2_{L^1(\Omega)} \ds \leq 2c(\kappa) \e^2.
%\eeq

In order to conclude the proof, we need to make sure that it is always possible to choose $T_\e$ as in \eqref{teps} and that $T_\e \geq k_1\e^{-2}$ for some $k_1 > 0$. We argue as follows: suppose first that
\[
\int_0^{\infty}||\partial_t u_\e(t) ||_{L^1} \dt > \delta.
\]
Then by continuity we can choose $T_\e > 0$ such that
\[
\int_0^{T_\e}||\partial_t u_\e(t) ||_{L^1} \dt = \delta,
\]
and for such a choice of $T_\e$, \eqref{genericBoundOnT} gives
\[
T_\e \geq \frac{\delta^2}{C \e^2}.
\]

Thus, by \eqref{energy3},
\beq \label{boundOnT}
\int_0^{k_1\e^{-2}} ||\partial_t u_{\e}(s) ||^2_{L^2} \ds \leq C \e^2 =: k_2 \e^2,
\eeq
for 
\[
k_1 := \frac{\delta^2}{C}.
\]

On the other hand, if
\[
\int_0^{\infty}||\partial_t u_\e(t) ||_{L^1} \dt \leq \delta,
\]
then \eqref{energy3} must hold for all $T_\e > 0$, and \eqref{boundOnT} holds true in this case as well.
\end{proof}

We are now ready to prove the main result.

\begin{proof}[Proof of Theorem \ref{mainresult}]
Let $k_1, k_2$ be as in Proposition \ref{aux2}, and rescale $u_{\e}$ by setting $\tilde{u}_{\e}(\x, t) = u_{\e}(\x, \e^{-1} t)$. Proposition \ref{aux2} applied to $\tilde{u}_{\e}$ reads
\[
\int_{0}^{k_1\e^{-1}} || \partial_t \tilde{u}^{\e}(t) ||^2_{L^2} \dt \leq k_2\e,
\]
and, in turn, by H\"older's inequality, for $0<M<k_1\e^{-1}$, 
\beq \label{slowCH1}
\int_{0}^M || \partial_t \tilde{u}_{\e}(t) ||_{L^1} \dt \leq M^{1/2} (k_2\e)^{1/2}.
\eeq
For any $0<s<M$, by the properties of the Bochner integral we have
\[
\begin{aligned}
||\tilde{u}_{\e}(s) - u_{0,\e}||_{L^1} = \left| \left| \int_0^{s} \partial_t \tilde{u}_\e(t) \dt  \right|\right|_{L^1} &\leq \int_0^s || \partial_t \tilde{u}_\e(t)  ||_{L^1}\dt \\
&\leq \int_0^M || \partial_t \tilde{u}_\e(t)  ||_{L^1}\dt,
\end{aligned}
\]
and thus
\beq \label{slowCH2}
\sup_{0 \leq s \leq M} ||\tilde{u}_{\e}(s) - u_{0,\e}||_{L^1} \leq \int_0^M || \partial_t \tilde{u}_\e(t)  ||_{L^1} \dt.
\eeq
On the other hand, by \eqref{ii},
\beq \label{slowCH3}
||\tilde{u}_{0,\e} - u_{E_0} ||_{L^1} \to 0 \mbox{ as } \e \to 0^+.
\eeq
Putting together \eqref{slowCH1}, \eqref{slowCH2} and \eqref{slowCH3} leads to 
\[
\sup_{0 \leq s \leq M} || \tilde{u}_{\e}(t) - u_{E_0} ||_{L^1} \to 0 \mbox{ as } \e \to 0^+,
\]
which implies the desired results \eqref{slowNLAC} and \eqref{slowCH}.
\end{proof}

\section{The Local Isoperimetric Function $\Iloc$}
As discussed in the introduction, our analysis heavily depends on the regularity of the local isoperimetric function $r \mapsto \Iloc(r)$ in a neighborhood of $r_0 := \LL(E_0)$, where $E_0$ is a mass--constrained local perimeter minimizer (see Definition \ref{LocalPerimeterMinimizer}). This is due to the fact that Theorem \ref{mainEnergyEstimate} assumes that the function $\Iloc$ satisfies a Taylor expansion of order 2 at $r_0$ (see \eqref{TaylorExpansion2}). As previously stated, we will write $\Idelta$ instead of $\Iloc$ when the set $E_0$ is clear from the context.

\subsection{Regularity in the Case $E_0 = B_{\rho}(0)$} 

In this subsection, we prove Theorem \ref{differentiableBall}, namely that $\Idelta$ is smooth near $r_0$ when $E_0$ is a ball. This particular choice of $E_0$ corresponds to the case of ``bubbles'', which has been widely studied in the last two decades (see e.g. \cite{AlikakosBronsardFusco}, \cite{AlikakosFusco}). Our approach is  rooted in the recent rigorous study of isoperimetric problems, and thus draws on ideas from geometric measure theory. This offers transparent, quantitative tools that permit a variational approach to the problem that, to our knowledge, is novel. We believe that these techniques may also prove to be useful in the study of other similar PDE problems.

\iffalse
\begin{figure}[ht]
\centering
\includegraphics[scale=0.55]{drawing10.png}  
\caption{Finding a good ``slice''}
\label{picture}
\end{figure}
\fi

\begin{figure}[ht]
\raggedleft
%\centering
\resizebox{100mm}{!}{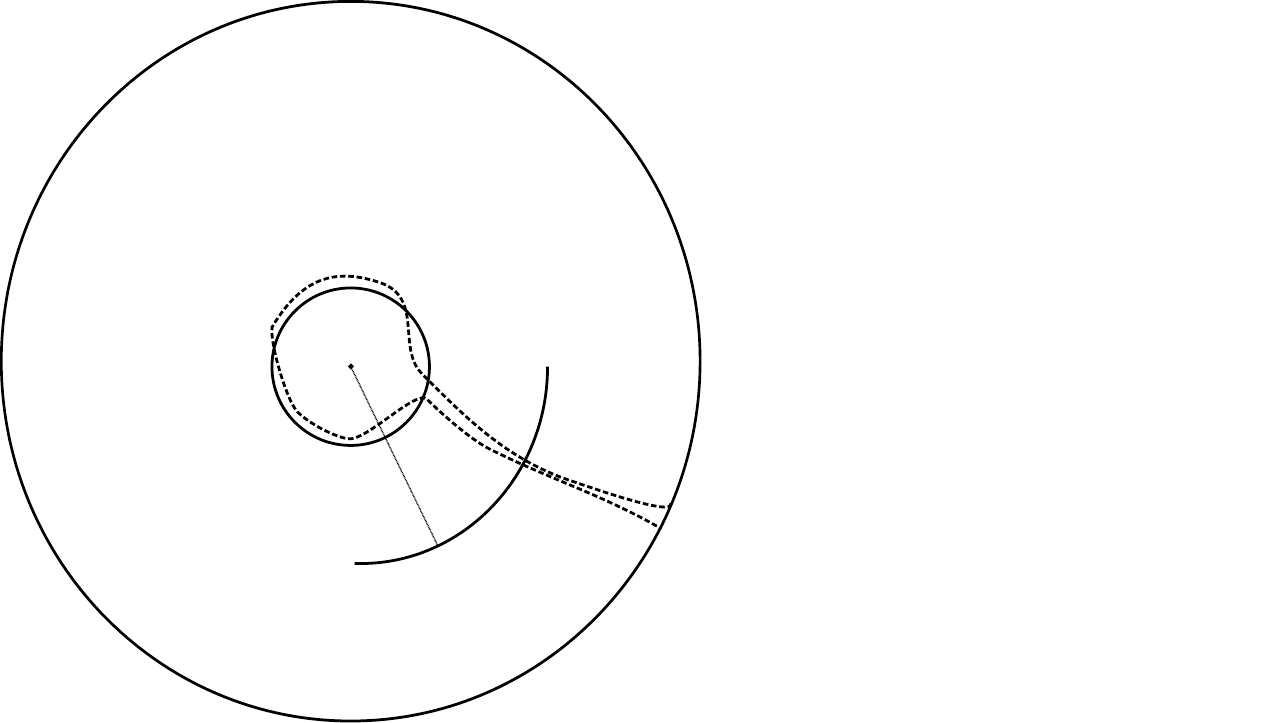} %\resizebox{75mm}{!}{}
\caption{Finding a good ``slice'' $\rho_1$.}
\label{picture}
\end{figure}

In what follows, we denote by $B_\rho$ the ball centered at $\0$ and radius $\rho$.

\begin{proof}[Proof of Theorem \ref{differentiableBall}] {\bf Step 1.}
We start by assuming that $\Omega = B_1$ and that $E_0 = B_{\rho_0}$, with $\rho_0 < 1$.

Given $\gamma>0$ (which we will fix later), choose $0 < c_1<\gamma/4$ and $0 < 2\delta < \gamma$. Fix a Borel set $E \subset B_1$ with $\LL(E) = r$, admissible in the definition of $\mathcal{I}_{B_1}^\delta (r)$, with $|r-r_0| < c_1$, and satisfying $P(E;B_1) = \mathcal{I}_{B_1}^\delta (r)$. 

Define
\beq \label{defv1}
V_1(\rho) := \int_\rho^{1} \HH^{N-1}( E \cap \partial B_s ) \ds = \LL(E \cap (B_1 \setminus \overline{B}_\rho)) , \quad \rho \in [0,1],
\eeq 
where we have used spherical coordinates. In particular, we have
\beq \label{3A}
\LL(E \setminus B_{\rho_0}) = V_1(\rho_0).
\eeq
We claim that for $\gamma$ chosen appropriately we must have that $V_1(\rho) \equiv 0$ in a left neighborhood of $\rho=1$. 

We assume, to obtain a contradiction, that $V_1(\rho) >0$ for all $\rho < 1$. Our goal will be to find an appropriate radius $\rho_1$ at which to ``slice'' our set (see Figure \ref{picture}). We will then estimate the perimeter of the set inside and outside of the slice to demonstrate that a ball with the same mass decreases the perimeter.

We begin by studying $\alpha(B_{\rho_0},E)$. Notice that if $r = r_0$, then
\beq \label{justmeasure}
\LL(E \cap B_{\rho_0}) + \LL(E \setminus B_{\rho_0})  = \LL(B_{\rho_0}) = \LL(B_{\rho_0} \cap E) + \LL(B_{\rho_0} \setminus E ),
\eeq
and, in turn,
\[
\LL(E  \setminus B_{\rho_0}) = \LL(B_{\rho_0} \setminus E).
\]
In particular, by \eqref{3A} this implies that
\[
\alpha(B_{\rho_0}, E) = \LL(B_{\rho_0} \setminus E) = \LL(E \setminus B_{\rho_0}) = V_1(\rho_0).
\]
Next, if $r_0-r =: \xi_r > 0$ we find that
\[
\LL(B_{\rho_0} \setminus E) - \xi_r = \LL(E \setminus B_{\rho_0}),
\]
and thus by \eqref{3A},
\[
\alpha(B_{\rho_0}, E) = \LL(E \setminus B_{\rho_0}) = V_1(\rho_0),
\]
while if $r_0-r =: \xi_r < 0$, then
\[
\LL(B_{\rho_0} \setminus E) + \xi_r = \LL(E \setminus B_{\rho_0}) = V_1(\rho_0),
\]
which gives
\[
\alpha(B_{\rho_0}, E) = \LL(B_{\rho_0} \setminus E) = V_1(\rho_0) - \xi_r.
\]
Summarizing, we obtain
\beq \label{alphacases}
\alpha(B_{\rho_0}, E) = \left \{
\begin{aligned}
&V_1(\rho_0)  &\text{if } r \in (r_0 - c_1, r_0], \\
&V_1(\rho_0) - \xi_r  &\text{if } r \in (r_0, r_0 + c_1).
\end{aligned}
\right.
\eeq
By definition of $\Idelta$, see \eqref{labelstar3}, we know that
\beq \label{alphasmall}
\alpha(B_{\rho_0}, E) \leq \delta.
\eeq
Thus we find that
\beq \label{newV1}
V_1(\rho_0) \leq \delta + |\xi_r| \leq \frac{\gamma}{2} +  \frac{\gamma}{4} =  \frac{3}{4} \gamma,
\eeq
where we used the fact that $|r - r_0| < c_1 < \gamma / 4$.

We claim that for any $C^* >0$, if $\gamma > 0$ (to be fixed later) is so small that
\beq \label{choice}
C^* > \frac{\gamma^{\frac{1}{n}}}{n(1 - \rho_0)},
\eeq
then there exists a measurable set $F \subset [\rho_0, 1]$ with $\mathcal{L}^1(F) > 0$ such that
\beq \label{claim}
- C^* \left( V_1(\rho) \right)^{\frac{n-1}{n}} \leq \frac{d V_1}{d\rho}(\rho),
\eeq
for all $\rho \in F$.

In order to prove \eqref{claim}, we argue by contradiction and suppose that
\[
- C^* \left( V_1(\rho)\right)^{\frac{n-1}{n}} > \frac{d V_1}{d\rho} (\rho)
\]
for a.e. $\rho \in [\rho_0, 1]$. Then, since $V_1> 0$ in $[0,1)$, for all $\rho \geq \rho_0$,
\[
-C^* > \frac{1}{n} \frac{d}{d\rho} \left(V_1(\rho)\right)^{\frac{1}{n}},
\]
and, in turn, by the fundamental theorem of calculus, we have
\[
(V_1(\rho))^{\frac{1}{n}} = (V_1(1))^{\frac{1}{n}} - \int_\rho^{1} \frac{d}{ds} \left(V_1(s)^{\frac{1}{n}}  \right) \ds > n C^* (1 - \rho),
\]
which, using \eqref{newV1}, implies that
\[
\gamma > V_1(\rho_0) > (n C^*)^n (1 -  \rho_0)^n,
\]
a contradiction with \eqref{choice}. Hence \eqref{choice} holds on a set of positive measure.

Next we note that for a.e. $\rho \in [\rho_0, 1]$ we have that
\beq \label{boundarynotcharged}
\HH^{n-1}( \partial^* E \cap \partial B_\rho ) = 0.
\eeq

Thanks to \eqref{claim}, we can now choose $\rho_1 \in F$ such that the condition in \eqref{boundarynotcharged} is satisfied. We define
\beq \label{3B}
E_1 := E \cap (B_1 \setminus {\overline B_{\rho_1}}),  \quad E_2 := E \cap \overline{ B }_{\rho_1}.
\eeq

Since
\beq \label{periodLabel}
\LL(E_1) = V_1(\rho_1) \leq V_1(\rho_0) < \frac{3\gamma}{4}
\eeq
by \eqref{defv1}, \eqref{3A} and \eqref{newV1}, taking $\gamma < r_{B_1}$, where $r_{B_1}$ is the constant given in \eqref{mazyaineq} with $\Omega = B_1$, we have that
\beq \label{star1}
P(E_1;B_1) \geq C_{B_1}(\LL(E_1))^{\frac{n-1}{n}} =  C_{B_1}(V_1(\rho_1))^{\frac{n-1}{n}}.
\eeq
On the other hand, in view of \eqref{boundarynotcharged},
\beq \label{star2}
\begin{aligned}
P(E_2;B_1) &= P(E \cap B_{\rho_1};B_1) = P(E\cap B_{\rho_1}) \\
&\geq n\omega_n^{1/n} (\LL(E \cap B_{\rho_1}))^{\frac{n-1}{n}} = n\omega_n^{1/n} (r-V_1(\rho_1))^{\frac{n-1}{n}},
\end{aligned}
\eeq
where we have used the isoperimetric inequality in $\R^n$ \eqref{isoinequality}, and \eqref{defv1}. Using the inequality
\beq \label{star3}
(1-s)^{\frac{n-1}{n}} \geq 1 - \frac{n-1}{n}\frac{s}{(1-s)^{ \frac{1}{n} }} \geq 1-\frac{n-1}{n}2^{ \frac{1}{n} } s
\eeq
for all $0<s< \frac{1}{2}$, we can bound from below the right hand side of \eqref{star2} by
\beq 
n \omega_n^{ \frac{1}{n} } r^{\frac{n-1}{n}} - \omega_n^{ \frac{1}{n}  } (n-1)2^{ \frac{1}{n} }r^{- \frac{1}{n} } V_1(\rho_1),
\eeq
provided $\gamma < \frac{r}{2}$ (see \eqref{periodLabel}).

We notice that
\beq \label{inclusion}
\begin{aligned}
&\partial^* E_1 \subset \left( \partial^* E \cap (B_1 \setminus B_{\rho_1})  \right) \cup \left( E \cap \partial B_{\rho_1}  \right), \\
&\partial^*E_2 \subset \left(\partial^*E \cap B_{\rho_1}  \right) \cup \left( E \cap \partial B_{\rho_1} \right).
\end{aligned}
\eeq

Since $E_1$ is a set of finite perimeter, using the structure theorem for sets of finite perimeter \eqref{structureThm}, \eqref{inclusion} implies
\[
\HH^{n-1}( \partial^* E \cap (B_1 \setminus B_{\rho_1})  ) \geq P(E_1; B_1) - \HH^{n-1}( E \cap \partial B_{\rho_1})
\]
and similarly for $E_2$,
\[
\HH^{n-1}( \partial^* E \cap B_{\rho_1}  ) \geq P(E_2; B_1) - \HH^{n-1}( E \cap \partial B_{\rho_1} ).
\]
In turn,
\beq \label{perimeter}
\begin{aligned}
P(E; B_1) &= \HH^{n-1}( \partial^* E \cap (B_1 \setminus B_{\rho_1}) ) + \HH^{n-1}( \partial^* E \cap B_{\rho_1}) ) \\
&\geq P(E_1; B_1) + P(E_2; B_1) - 2\HH^{n-1}( E \cap \partial B_{\rho_1} )\\
&\geq C_{B_1}(V_1(\rho_1))^{\frac{n-1}{n}} + n \omega_n^{ \frac{1}{n} } r^\frac{n-1}{n} -  \omega_n^{ \frac{1}{n} } (n-1) 2^{ \frac{1}{n}  }r^{-  \frac{1}{n} } V_1(\rho_1) - 2 \HH^{n-1}(E \cap B_{\rho_1}),
\end{aligned}
\eeq
where the first inequality holds in view of \eqref{boundarynotcharged}, and where we have used \eqref{star1}, \eqref{star2} and \eqref{star3}.

Using the fundamental theorem of calculus in \eqref{defv1} we have that $\frac{d V_1(\rho)}{d \rho} = -\HH^{n-1}(E \cap \partial B_\rho)$ for all $0 < \rho < 1$, and so also by \eqref{claim} the right-hand side of \eqref{perimeter} can be bounded from below by
\beq
(C_{B_1} - 2C^*)(V_1(\rho))^{\frac{n-1}{n}} + n \omega_n^{ \frac{1}{n} } r^{\frac{n-1}{n}} - \omega_n^{ \frac{1}{n}  }(n-1) 2^{  \frac{1}{n}  }r^{-  \frac{1}{n}  } V_1(\rho).
\eeq
Fix $C^* := \frac{1}{4}  C_{B_1}$. By taking $\gamma$ so small that
\[
(C_{B_1} - 2C^*) - \omega_n^{  \frac{1}{n}  }(n-1)2^{  \frac{1}{n}  }r^{-  \frac{1}{n}  } \gamma^{ \frac{1}{n}  } > 0,
\]
by \eqref{periodLabel} we have that
\[
P(E;B_1) > n\omega^{1/n}r^{\frac{n-1}{n}}.
\]
Let $\rho_r := (\frac{r}{\omega_n})^{  \frac{1}{n}  }$ so that $\LL(B_{\rho_r}) = r$. Then
\[
\HH^{n-1}(\partial B_{\rho_r}) = n\omega_n \rho_r^{n-1} = n \omega_n^{  \frac{1}{n}  } r^{\frac{n-1}{n}},
\]
and so $P(E;B_1) > P(B_{\rho_r};B_1)$. On the other hand,
\[
\alpha(B_{\rho_r},B_{\rho_0}) = 0 \leq \delta,
\]
and we have reached a contradiction (see\eqref{labelstar3}). It follows that $V_1(\rho) = 0$ for all $\rho$ close to $1$. This shows that $E \subset B_{\rho}$ for some $\rho < 1$. In turn, $P(E;B_1) = P(E)$. Hence we can use the isoperimetric inequality in $\R^n$ (see \eqref{isoinequality}), to conclude that $E$ is in fact a ball of radius $\rho_r$. This proves \eqref{explicitMuDelta}. \\

\noindent {\bf Step 2.} Now suppose that $E_0 = B_{r_0}(\x) \subset \subset \Omega$, for an arbitrary $\Omega$ satisfying \eqref{dom}, for some $\x \in \Omega$. Again, given a $\gamma>0$ (which we will fix later), we choose $0 < c_1<\gamma/4$ and $0 < 2\delta < \gamma$. Let $R>r_0$ be such that $B_R(\x) \subset \subset \Omega$. Fix $E$ as in step 1, so that $\LL(E) = r$, $|r-r_0| < c_1$ and $P(E;\Omega) = \Idelta(r)$. 

We define
\[
E_1 := E \cap (\Omega \setminus B_R(\x)), \quad E_2 := E \cap B_R(\x)
\]
and estimate
\[
P(E;\Omega) \geq P(E_1;\Omega \setminus B_R(\x)) + P(E_2 ; B_R(\x)).
\]
Following the same reasoning in the derivation of equation \eqref{newV1} in Step 1, we have that $\LL(E_1) \leq \frac{3\gamma}{4}$, and thus \eqref{mazyaineq} implies that 
\[
P(E_1;\Omega \setminus B_R(\x)) \geq C_{\Omega \setminus B_R(\x)} \LL(E_1)^{\frac{n-1}{n}}
\]
as long as $\gamma$ is small enough. It is clear that $\alpha(E_2,E_0) \leq \alpha(E,E_0)$, and that $|\LL(E_2) - r_0| \leq \LL(E_1) + |r-r_0| \leq \gamma$. By the results of Step 1 we know that for $\gamma$ small enough
\[
P(E_2; B_R(\x)) \geq \mathcal{I}_{B_R(\x)}^\delta(\LL(E_2)) =  n\omega_n^{ \frac{1}{n} }(\LL(E_2))^{\frac{n-1}{n}} = n\omega_n^{ \frac{1}{n} }(r-\LL(E_1))^{\frac{n-1}{n}}.
\]
As in Step 1, defining $\rho_r := (\frac{r}{\omega_n})^{\frac{1}{n}}$, if $\LL(E_1)>0$ this implies that
\[
P(E;\Omega) > P(B_{\rho_r}(\x)) = P(B_{\rho_r}(\x);\Omega)
\]
while $\alpha(B_{\rho_r}(\x),E_0)\leq \delta$, which is a contradiction. Again, as in Step 1, the classical isoperimetric inequality \eqref{isoinequality} then implies that $E$ must be a ball, which concludes the proof.
\end{proof}

\subsection{Regularity in the Case of Positive Second Variation}
In this subsection we will prove Theorem \ref{RegularitySecondVariation}. We begin by stating the following lemma, which summarizes a number of classical results (see e.g. \cite{GMT83}, \cite{Gruter}, \cite{LeoniMurray}, \cite{MaggiBook},\cite{SternbergZumbrun}  ), see Lemma 5.4 in \cite{LeoniMurray} for details.

\begin{lemma} \label{LocalVariation}
Let $\Omega$ satisfy the assumptions in Section 2 (see \eqref{dom}), and let $E_0\subset \Omega$ be a volume-constrained local perimeter minimizer in $\Omega$. Then $\partial E_0$ is a surface of constant mean curvature $\kappa_{E_0}$, which intersects the boundary of $\Omega$ orthogonally. Moreover, there exists a neighborhood $I$ of $r_0$ and a family of sets $\{ V_r \}_r$ constructed via a normal perturbation of $E_0$ (see Theorem \ref{SecondVarThm}), satisfying
\begin{equation} \label{Eqn:VrPermissible}
\LL(V_r) = r,\qquad \lim_{r \to r_0} |V_r \Delta E_0| = 0,
\end{equation}
and such that the function 
\[
r \mapsto \phi(r) := P(V_r;\Omega), \quad \text{for } r \in I,
\]
is smooth. Moreover, the function $\phi$ satisfies
\begin{equation} \label{Eqn:Touching}
\phi(r_0) = P(E_0;\Omega),\quad \frac{d\phi(r)}{dr} \Big\vert_{r=r_0} = \kappa_{E_0}(n-1),
\end{equation}
and
\[
\frac{d^2\phi(r)}{dr^2} \Big\vert_{r=r_0} = - \frac{  \int_{\partial E_0} |A_{E_0} |^2 \,d\HH^{n-1}  + \int_{\partial E_0 \cap \partial \Omega} \nu_{\partial E_0} \cdot A_{ \Omega} \nu_{\partial E_0}\,d\HH^{n-2} }{ P(E_0; \Omega)^2     },
\]
where $A_{E_0}$ and $A_{\Omega}$ are the second fundamental forms, see Definition \ref{FundForm}.
\end{lemma}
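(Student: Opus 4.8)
The plan is to obtain the statement by assembling three classical ingredients: regularity theory for volume--constrained perimeter minimizers with a free boundary, an explicit construction of the comparison family $\{V_r\}_r$ by flowing $E_0$ along a constant--speed normal field and then reparametrizing in the volume, and the first and second variation formulas for perimeter in the form that retains the boundary contribution along $\partial E_0\cap\partial\Omega$. \textbf{Step 1 (constant mean curvature, orthogonality, regularity up to $\partial\Omega$).} Since $E_0$ minimizes $P(\cdot;\Omega)$ among competitors with the same volume and small symmetric difference, it is an almost--minimizer (equivalently a $\Lambda$--minimizer) of $P(\cdot;\Omega)$ in the sense of \cite{GMT83}; by the regularity theory of \cite{GMT83} and \cite{Gruter}, valid for $n\leq 7$ so that there is no singular part, $\partial E_0\cap\Omega$ is an embedded hypersurface which, upon testing the first variation with vector fields tangent to $\partial\Omega$, has constant distributional mean curvature (this is Theorem \ref{MeanCurvThm}, and yields the value $\kappa_{E_0}$) and meets $\partial\Omega$ orthogonally. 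Elliptic bootstrapping for the constant--mean--curvature equation with the orthogonal free--boundary condition, using $\partial\Omega\in C^{4,\sigma}$, upgrades $\partial E_0$ to a hypersurface--with--boundary of class $C^{2,\varsigma}$ (in fact analytic in the interior); this is precisely what makes the tangential gradient, the second fundamental form $A_{E_0}$, and the integrands in Theorems \ref{SecondVarThm} and \ref{divthm} meaningful up to $\partial E_0\cap\partial\Omega$.

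\textbf{Step 2 (construction of $V_r$).} I would fix $c>0$ and build a field $X\in C^1(\overline\Omega;\R^n)$ coinciding with $c\,N_{E_0}$ in a neighborhood of $\partial E_0$ (with $N_{E_0}$ the extended unit normal of \eqref{tripleX}) and tangent to $\partial\Omega$ along $\partial\Omega$. This is possible exactly because orthogonality forces $\nu_{E_0}\perp\nu_{\partial\Omega}$ along $\partial E_0\cap\partial\Omega$, so $c\,N_{E_0}$ is already tangent to $\partial\Omega$ where the two prescriptions overlap; a partition of unity composed near $\partial\Omega$ with the tangential projection onto $T\partial\Omega$ produces such an $X$. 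Let $\{\Phi_t\}$ be its flow; since $X$ is tangent to $\partial\Omega$, $\Phi_t(\Omega)=\Omega$, and hence $P(\Phi_t(E_0);\Omega)=\HH^{n-1}\big(\Phi_t(\partial E_0\cap\Omega)\big)$. Setting $v(t):=\LL(\Phi_t(E_0))$, the transport formula gives $v(0)=r_0$ and $v'(0)=\int_{\partial E_0}X\cdot\nu_{E_0}\,d\HH^{n-1}=c\,P(E_0;\Omega)\neq 0$, so the inverse function theorem yields a local inverse $r\mapsto t(r)$ near $r_0$, as regular as the flow allows. Define $V_r:=\Phi_{t(r)}(E_0)$ on a neighborhood $I$ of $r_0$: then $\LL(V_r)=r$ and $|V_r\,\Delta\,E_0|=|\Phi_{t(r)}(E_0)\,\Delta\,E_0|\to 0$ as $r\to r_0$ since $t(r)\to 0$ and $\Phi_0=\mathrm{id}$, which is \eqref{Eqn:VrPermissible}.

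\textbf{Step 3 (smoothness of $\phi$ and the derivative formulas).} Writing $\phi(r)=P(V_r;\Omega)=\int_{\partial E_0\cap\Omega}J^{\partial E_0}\Phi_{t(r)}\,d\HH^{n-1}$ in terms of the tangential Jacobian of $\Phi_{t(r)}$ on $\partial E_0$, the integrand is a universal polynomial in the (smooth in $t$) derivatives of $\Phi_t$, so composition with the regular reparametrization $t(\cdot)$ shows that $\phi$ is as smooth as Step 1 allows, in particular $C^{2,\varsigma}$ near $r_0$ — which is what is needed to check \eqref{TaylorExpansion2}. Clearly $\phi(r_0)=P(E_0;\Omega)$. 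For $\phi'(r_0)$, the chain rule gives $\phi'(r_0)=t'(r_0)\int_{\partial E_0}\mathrm{div}_{\partial E_0}X\,d\HH^{n-1}$, and by the divergence theorem on a surface with boundary (Theorem \ref{divthm}, with $M=\partial E_0\cap\Omega$, $\Gamma=\partial E_0\cap\partial\Omega$) the integral equals $\int_{\partial E_0}X\cdot\mathbf H_{\partial E_0}\,d\HH^{n-1}+\int_{\Gamma}X\cdot\nu^M_{\Gamma}\,d\HH^{n-2}$; the boundary term vanishes because $\nu^M_{\Gamma}\parallel\nu_{\partial\Omega}$ (orthogonality) while $X$ is tangent to $\partial\Omega$, and the remaining term equals $c\,P(E_0;\Omega)$ times the constant scalar mean curvature, so cancelling $t'(r_0)=1/(c\,P(E_0;\Omega))$ yields $\phi'(r_0)=\kappa_{E_0}(n-1)$ (matching \eqref{Eqn:Touching}, and the elementary identity $\phi(r)=n\omega_n^{1/n}r^{(n-1)/n}$ when $E_0$ is a ball). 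For $\phi''(r_0)$, differentiating $\phi(r)=P(\Phi_{t(r)}E_0;\Omega)$ twice gives $\phi''(r_0)=t'(r_0)^2\,Q''(0)+t''(r_0)\,Q'(0)$ with $Q(t):=P(\Phi_t E_0;\Omega)$, where $Q''(0)$ is computed from the free--boundary version of Theorem \ref{SecondVarThm} (which adds the term $-\int_{\partial E_0\cap\partial\Omega}\nu_{\partial E_0}\cdot A_\Omega\,\nu_{\partial E_0}\,\zeta^2\,d\HH^{n-2}$ coming from the bending of $\partial\Omega$), evaluated on the \emph{constant} density $\zeta\equiv c$ so that the tangential--gradient term disappears, while $t''(r_0)$ is read off from $v''(0)=c^2\,\kappa_{E_0}(n-1)\,P(E_0;\Omega)$ — itself a consequence of the Riccati equation for the second fundamental form of the level sets of the signed distance to $\partial E_0$, integrated via the divergence theorem. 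A short computation then shows that the $\kappa_{E_0}^2$ contributions arising from $Q''(0)$ and from the reparametrization cancel exactly, leaving precisely the stated expression for $\phi''(r_0)$ after dividing by $P(E_0;\Omega)^2$.

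\textbf{Main obstacle.} The delicate points, in my view, are (a) the boundary regularity of $\partial E_0$ up to $\partial\Omega$ — this is where $\partial\Omega\in C^{4,\sigma}$ is genuinely used, and one must check it is enough for $A_{E_0}$ and the second--variation integrands to be continuous up to $\Gamma=\partial E_0\cap\partial\Omega$; and (b) setting up and correctly signing the free--boundary second--variation formula (the $A_\Omega$ term along $\Gamma$) and reconciling it with the volume--fixing reparametrization. Everything else is a routine, if somewhat lengthy, variational computation; for full details one follows \cite{GMT83}, \cite{Gruter}, \cite{SternbergZumbrun} and Lemma 5.4 of \cite{LeoniMurray}.
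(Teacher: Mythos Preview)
The paper does not actually supply a proof of this lemma: it is stated as a summary of classical results, with a pointer to \cite{GMT83}, \cite{Gruter}, \cite{MaggiBook}, \cite{SternbergZumbrun} and to Lemma~5.4 of \cite{LeoniMurray} for the detailed construction and computation. Your proposal is a correct and careful unpacking of precisely that route --- Gr\"uter--Tamanini/Gr\"uter regularity for the constant mean curvature and orthogonality, the constant--speed normal flow tangentially extended to $\partial\Omega$ (made possible by orthogonality) and reparametrized by volume, and the Sternberg--Zumbrun first/second variation with the free--boundary $A_\Omega$ term --- so there is no meaningful methodological difference to report. Your identification of the two genuinely delicate points (boundary regularity of $\partial E_0$ up to $\partial\Omega$, and the sign/bookkeeping in the free--boundary second variation with the volume reparametrization so that the $\kappa_{E_0}^2$ contributions cancel) matches exactly what one finds in \cite{SternbergZumbrun} and \cite{LeoniMurray}; the only cosmetic gap is that the paper asserts $\phi$ is ``smooth'' whereas your $C^{2,\varsigma}$ is what the hypotheses actually yield, but that is all that is used downstream.
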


\begin{remark}\label{Remark:USC}
Recalling the definition of $\Iloc$, if follows from \eqref{Eqn:VrPermissible} and \eqref{Eqn:Touching} that $\Iloc$ is upper semi-continuous at $r_0$.
\end{remark}

We start by proving the following.
\begin{lemma} \label{Ryan}
Let $\Omega$ satisfy the assumptions in Section 2 (see \eqref{dom}), and let $E_0$ be a volume--constrained local perimeter minimizer with $r_0 := \LL(E_0)$. Let $ \delta > 0$, and let $I_{r_0}  \subset \subset [0,\LL(\Omega]$ be an open interval containing $r_0$. Suppose that for every $r \in I_{r_0}$ at least one minimizer $E_r$ of the problem
\[
\min \{P(E; \Omega) : \LL(E) = r, \ \alpha(E, E_0) \leq \delta  \}
\]
satisfies
\begin{equation}\label{NotSaturated}
\alpha(E_r, E_0) < \delta.
\end{equation}
Then the local isoperimetric function $\Iloc$ is semi--concave in $I_{r_0}$, that is, there exists a constant $C > 0$ such that
\beq \label{semiConc}
r \mapsto \Iloc(r) - Cr^2
\eeq
is a concave function in $I_{r_0}$.
\end{lemma}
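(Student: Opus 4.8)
The plan is to establish semi-concavity by the standard strategy for isoperimetric-type functions: show that for every $\bar r \in I_{r_0}$ one can construct a competitor family whose perimeter lies above $\Iloc$ but whose second difference quotient at $\bar r$ is bounded above by a fixed constant $C$ independent of $\bar r$. Since a function that is touched from above at every point by parabolas $r \mapsto \ell(r) + Cr^2$ with $\ell$ affine and $C$ uniform must have $r \mapsto \Iloc(r) - Cr^2$ concave, this suffices. Concretely, fix $\bar r \in I_{r_0}$ and let $E_{\bar r}$ be a minimizer realizing $\Iloc(\bar r)$ with $\alpha(E_{\bar r}, E_0) < \delta$, which exists by hypothesis \eqref{NotSaturated}. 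The strict inequality is the crucial point: it means the constraint $\alpha(\cdot, E_0) \leq \delta$ is \emph{inactive} at $E_{\bar r}$, so for $r$ near $\bar r$ any sufficiently small volume-fixing perturbation of $E_{\bar r}$ remains admissible for the problem defining $\Iloc(r)$.

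First I would recall the regularity theory for volume-constrained perimeter minimizers (from Proposition \ref{propfund} and Lemma \ref{LocalVariation}, via \cite{Gruter}, \cite{MaggiBook}): $\partial E_{\bar r} \cap \Omega$ is a smooth hypersurface of constant mean curvature meeting $\partial \Omega$ orthogonally, with the usual $C^{1,\alpha}$ (indeed $C^\infty$) regularity up to the free boundary for $n \leq 7$. Then, following the construction in Lemma \ref{LocalVariation} (or Lemma 5.4 in \cite{LeoniMurray}), I would build a one-parameter family $\{V_r^{\bar r}\}_{r}$ via a normal perturbation $T = \zeta N_{E_{\bar r}}$ of $E_{\bar r}$, with $\zeta \in C^\infty_c(\Omega)$ chosen so that $\int_{\partial E_{\bar r}} \zeta \, d\HH^{n-1} \neq 0$, normalized so that $\LL(V_r^{\bar r}) = r$ and $V_{\bar r}^{\bar r} = E_{\bar r}$. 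The first and second variation formulas (Theorem \ref{FirstVarPer}, Theorem \ref{SecondVarThm}) then give that $r \mapsto \phi_{\bar r}(r) := P(V_r^{\bar r}; \Omega)$ is smooth near $\bar r$ with $\phi_{\bar r}(\bar r) = \Iloc(\bar r)$ and
\[
\frac{d^2 \phi_{\bar r}}{dr^2}\Big\vert_{r = \bar r} = - \frac{\int_{\partial E_{\bar r}} |A_{E_{\bar r}}|^2 \, d\HH^{n-1} + \int_{\partial E_{\bar r} \cap \partial \Omega} \nu \cdot A_\Omega \nu \, d\HH^{n-2}}{P(E_{\bar r}; \Omega)^2}.
\]
Because $A_\Omega$ is controlled by the $C^{2,\sigma}$ (here $C^{4,\sigma}$) bound on $\partial \Omega$ and $P(E_{\bar r}; \Omega) \geq c > 0$ uniformly (by the Maz'ya inequality \eqref{mazyaineq} and the fact that $\LL(E_{\bar r})$ stays in a compact subinterval of $(0, \LL(\Omega))$), the negative term $-\int |A_{E_{\bar r}}|^2 / P^2 \leq 0$ can only help, and the boundary term is bounded below by $-C$ with $C$ depending only on $\Omega$ and $\inf_{I_{r_0}} \Iloc > 0$. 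Hence $\frac{d^2\phi_{\bar r}}{dr^2}\big\vert_{\bar r} \leq C$ uniformly in $\bar r$.

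Next, since $\alpha(E_{\bar r}, E_0) < \delta$ and $|V_r^{\bar r} \Delta E_{\bar r}| \to 0$ as $r \to \bar r$, there is an $r$-interval around $\bar r$ (whose length can be taken uniform in $\bar r$ by a compactness/continuity argument over $I_{r_0}$, using that the "slack" $\delta - \alpha(E_{\bar r}, E_0)$ is bounded below — this needs a short argument) on which $\alpha(V_r^{\bar r}, E_0) \leq \delta$, so $V_r^{\bar r}$ is admissible and therefore $\Iloc(r) \leq \phi_{\bar r}(r)$. Combined with $\Iloc(\bar r) = \phi_{\bar r}(\bar r)$ and the uniform bound on $\phi_{\bar r}''(\bar r)$, a second-order Taylor expansion of $\phi_{\bar r}$ at $\bar r$ gives $\Iloc(r) \leq \Iloc(\bar r) + \phi_{\bar r}'(\bar r)(r - \bar r) + \frac{C}{2}|r - \bar r|^2 + o(|r-\bar r|^2)$; absorbing the $o$-term into a slightly larger constant, $\Iloc$ is touched from above at $\bar r$ by a parabola with second coefficient $\leq C$ uniform in $\bar r$, which is exactly the statement that $r \mapsto \Iloc(r) - Cr^2$ is concave on $I_{r_0}$.

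The main obstacle I anticipate is the \textbf{uniformity} of the length of the admissibility interval around each $\bar r$ — i.e., ruling out that the slack $\delta - \alpha(E_{\bar r}, E_0)$ degenerates to $0$ as $\bar r$ ranges over $I_{r_0}$, and simultaneously controlling the rate at which $|V_r^{\bar r} \Delta E_{\bar r}|$ grows in $r$ uniformly in $\bar r$. This should follow from a compactness argument: minimizers $E_{\bar r}$ for $\bar r$ in the compact set $\overline{I_{r_0}}$ subconverge in $L^1$ (and in $C^{1,\alpha}$ of the boundary, by the density and regularity estimates for almost-minimizers) to a minimizer, the map $\bar r \mapsto \alpha(E_{\bar r}, E_0)$ is upper semi-continuous, and the function $\zeta$ in the normal perturbation can be chosen with uniformly controlled norms; one then just needs that $\alpha(\cdot, E_0)$ does not jump up to $\delta$ in the limit, which is where hypothesis \eqref{NotSaturated} is used in an essential, uniform way. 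A secondary technical point is handling the free boundary $\partial E_{\bar r} \cap \partial \Omega$ correctly in the second-variation formula, but this is exactly what Lemma \ref{LocalVariation} packages for us, so I would simply invoke it.
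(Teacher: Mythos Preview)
Your overall strategy matches the paper's exactly: for each $\bar r$ use the strictly interior minimizer $E_{\bar r}$ (guaranteed by \eqref{NotSaturated}), invoke Lemma~\ref{LocalVariation} to build the touching family $\phi_{\bar r}$, bound $\phi_{\bar r}''(\bar r)$ uniformly, and conclude semi-concavity from touching parabolas. For this last step the paper cites Lemma~2.7 in \cite{SternbergZumbrun}, which does \emph{not} require the admissibility interval $(\bar r - \delta_{\bar r}, \bar r + \delta_{\bar r})$ to have uniform length---only the curvature constant $C$ must be uniform---so your entire final paragraph of worry about the slack $\delta - \alpha(E_{\bar r},E_0)$ and compactness is unnecessary.

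There is, however, a genuine gap in your bound on $\phi_{\bar r}''(\bar r)$. You assert that the free-boundary term $\int_{\partial E_{\bar r} \cap \partial\Omega} \nu \cdot A_\Omega \nu \, d\HH^{n-2}$ is bounded below by $-C$ with $C$ depending only on $\Omega$ and $\inf_{I_{r_0}} \Iloc$. But $|A_\Omega| \leq C_\Omega$ only gives
\[
\left| \int_{\partial E_{\bar r} \cap \partial\Omega} \nu \cdot A_\Omega \nu \, d\HH^{n-2} \right| \leq C_\Omega \, \HH^{n-2}(\partial E_{\bar r} \cap \partial\Omega),
\]
and there is no a priori uniform bound on $\HH^{n-2}(\partial E_{\bar r} \cap \partial\Omega)$ from the perimeter alone. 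This is precisely where the paper spends its effort: it extends $C_\Omega \nu_\Omega$ to a $C^1_c$ vector field $T$ on $\R^n$, applies the tangential divergence theorem (Theorem~\ref{divthm}) on $M = \overline{\partial E_{\bar r} \cap \Omega}$ with $\Gamma = \partial E_{\bar r} \cap \partial\Omega$ to obtain
\[
C_\Omega \,\HH^{n-2}(\partial E_{\bar r} \cap \partial\Omega) \leq C\, P(E_{\bar r};\Omega) + C \int_{\partial E_{\bar r}} |\kappa_{E_{\bar r}}| \, d\HH^{n-1},
\]
and then uses $C|\kappa_{E_{\bar r}}| \leq \sqrt{n-1}\,C\, |A_{E_{\bar r}}| \leq \max\{(n-1)C^2,\, |A_{E_{\bar r}}|^2\}$ to \emph{absorb} the curvature integral into the good term $-\int_{\partial E_{\bar r}} |A_{E_{\bar r}}|^2$ already present in the numerator. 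Only after this absorption does one get $\phi_{\bar r}''(\bar r) \leq m_2/m_1$ uniformly. Your proposal treats the $-\int|A|^2$ term as purely helpful and discardable, but in fact it is essential for controlling the free-boundary contribution; without this step the uniform bound on the second derivative is unjustified.
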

\begin{remark} \label{remark:semiconcave}
By setting $\delta$ large enough this establishes that the isoperimetric function $\I$ is semi--concave on any interval $[a,b] \subset [0,\LL(\Omega)] = [0,1]$.
\end{remark}
\begin{proof}
By lower semicontinuity of the perimeter and BV compactness, it follows that $\Iloc$ is lower semicontinuous. By \eqref{NotSaturated} we have that $E_r$ must be a local volume-constrained perimeter minimizer. Thus by Lemma \ref{LocalVariation} applied to $E_r$, for any $r \in I_{r_0}$ there exists a smooth function $\phi_r$ and a constant $\delta_r > 0$ depending on $r$ such that

\beq \label{smooth1}
\phi_r(s) \geq \Iloc(s) \mbox{ for all }  s \in (r-\delta_r, r+\delta_r), \quad \phi_r(r) = P(E_r; \Omega) = \Iloc(r),
\eeq
and
\beq \label{formulaPhi}
\frac{d^2 \phi_r(s)}{ds^2} \Big\vert_{s = r} = - \frac{  \int_{\partial E_r} |A_{E_r} |^2 \,d\HH^{n-1}  + \int_{\partial E_r \cap \partial \Omega} \nu_{ E_r} \cdot A_\Omega \nu_{ E_r} \,d\HH^{n-2} }{ P(E_r; \Omega)^2   },
\eeq
where we recall that $|A_{E_r} |$ is the Frobenius norm, see equation \eqref{Def:Frob}. Furthermore, we notice that the lower semicontinuity of $\Iloc$, together with \eqref{smooth1}, implies that $\Iloc$ is continuous on $I_{r_0}$.\\

Let $\displaystyle C_\Omega := \max_{\x \in \partial \Omega} |A_\Omega(\x)|$. Then we have
\beq \label{boundFro}
\left| \int_{\partial E_r \cap \partial \Omega}\nu_{ E_r} \cdot  A_\Omega \nu_{ E_r}\,d\HH^{n-2}\right| \leq C_\Omega \int_{\partial E_r \cap \partial \Omega} \nu_\Omega \cdot \nu_\Omega ,d\HH^{n-2}.
\eeq
Since $\Omega$ is of class $C^{2,\alpha}$, we can locally express $\partial \Omega$ as the graph of a function of class $C^{2,\alpha}$ and, in turn, we can locally extend the normal to the boundary $\nu_\Omega$ to a $C^{1,\alpha}$ vector field. Thus, using a partition of unity, we may extend the vector field $C_\Omega \nu_\Omega$ to a vector field $T  \in C_c^1(\R^n; \R^n)$ satisfying 
\beq \label{extendedT}
\|T\|_\infty \leq C, \qquad \|\nabla T\|_\infty \leq C
\eeq 
for some constant $C > 0$. We then apply the divergence theorem (see Theorem \ref{divthm}) with $M = \overline{(\partial E_r) \cap \Omega}$ and $\Gamma = \partial E_r \cap \partial \Omega $ to find that
\beq \label{applydiv}
\begin{aligned}
C_\Omega \int_{\partial E_r \cap \partial \Omega} \nu_\Omega \cdot \nu_\Omega \,d\HH^{n-2} &= \int_{\partial E_r} {\rm div}_{E_r} T \,d\HH^{n-1} - \int_{\partial E_r} T \cdot \kappa_{E_r} \nu_\Omega \,d\HH^{n-1}\\
 &\leq C P(E_r;\Omega) + C \int_{\partial E_r} |\kappa_{E_r}| \,d\HH^{n-1},
\end{aligned}
\eeq
where in the last inequality we have used \eqref{Def:BoundaryDiv} and \eqref{extendedT}. Moreover, we recall that (see Proposition \ref{propfund}) for every $\x \in \Omega \cap \partial E_r$,
\beq \label{recalldef}
|A_{E_r}(\y)|^2 = \sum_{h = 1}^{n-1} \kappa_{h,E_r}(\y)^2, \qquad \ \kappa_{E_r}(\y) = \sum_{h = 1}^{n-1} \kappa_{h,E_r}(\y) \ \mbox{ for all } \y \in B_r(\x) \cap \partial E_r
\eeq
where $\kappa_{h,E_r}$ are the principal curvatures of $E_r$. Thus, using \eqref{recalldef}, if we consider the principal curvatures $\kappa_{h,E_r}$ as a vector in $\R^{n-1}$ then we have that
\beq \label{boundMax}
C |\kappa_{E_r}| \leq \sqrt{n-1}C |A_{E_r}| \leq \max\{ (n-1)C^2, |A_{E_r}|^2\}.
\eeq
In turn, putting together \eqref{formulaPhi}, \eqref{boundFro}, \eqref{applydiv} and \eqref{boundMax}, we get
\begin{align*}
\frac{d^2 \phi_r(s)}{ds^2} \Big\vert_{s = r} &\leq \frac{- \int_{\partial E_r} |A_{E_r} |^2 \,d\HH^{n-1} + CP(E_r;\Omega) + \int_{\partial E_r} \max\{ (n-1)C^2, |A_{E_r}|^2 \,\HH^{n-1}}{P(E_r;\Omega)^2}\\
&\leq \frac{CP(E_r;\Omega) + (n-1)C^2 P(E_r;\Omega)}{P(E_r;\Omega)^2}.
\end{align*}

Denote
\[
m_1 := \min_{s \in \overline{I_{r_0}}} \Iloc(s), \ m_2 := C + (n-1)C^2 < \infty,
\]
and notice that
\[
\min_{s \in \overline{I_{r_0}}} \Iloc(s) \geq \min_{s \in \overline{I_{r_0}}} \I(s) > 0
\]
where the last inequality follows from Proposition \ref{eqn:isoperMazya} (see also Lemma 3.2.4 in \cite{MazyaBook}). From \eqref{formulaPhi} we have that
\beq \label{secondDerivativeEstimate}
\frac{d^2 \phi_r(s)}{ds^2} \Big\vert_{s = r} \leq \frac{m_2}{m_1}.
\eeq

Thus by \eqref{smooth1} for any $r$ we can find a $\delta_r>0$ so that for $s \in (r-\delta_r, r + \delta_r)$,
\beq \label{sternZu}
\begin{aligned}
\Iloc(s) - \frac{m_2}{m_1} s^2 &\leq \phi_r(s) - \frac{m_2}{m_1} s^2 \\
&= \phi_r(s) - \frac{m_2}{m_1} ((s-r)^2 + 2s r - r^2) \\
&=: \psi(s) - \frac{m_2}{m_1}(2s r - r^2),
\end{aligned}
\eeq
where $\psi(s) = \phi_r(s) - \frac{m_1}{m_2}(s-r)^2$ is a concave function on $(r-\delta_r, r +  \delta_r)$ by \eqref{secondDerivativeEstimate}. The estimate \eqref{sternZu} allows us to apply Lemma 2.7 in \cite{SternbergZumbrun} and conclude that $\Iloc(s) -\frac{m_2}{m_1}s^2$ is a concave function on $I_{r_0}$. In turn, $\Iloc$ is semi--concave on $I_{r_0}$.
\end{proof}

\begin{corollary} \label{Lip}
Under the assumptions of Lemma \ref{Ryan}, the local isoperimetric function $\Iloc$ is locally Lipschitz in $I_{r_0}$. Furthermore, for all $J_{r_0} \subset \subset I_{r_0}$, for all $r \in J_{r_0}$, the values $\kappa_{E_r}(n-1)$ belong to the supergradient of $\Iloc$, and hence
\beq \label{curvaturesBounded}
|  \kappa_{E_r} |  \leq L,
\eeq
where $L$ is the Lipschitz constant of $\Iloc$ in $J_{r_0}$.
\end{corollary}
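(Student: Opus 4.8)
The plan is to deduce local Lipschitz continuity directly from the semi-concavity established in Lemma \ref{Ryan}, and then to identify the curvature values $\kappa_{E_r}(n-1)$ as elements of the supergradient of $\Iloc$. First I would recall that a function which is semi-concave on an open interval $I_{r_0}$—that is, $r \mapsto \Iloc(r) - Cr^2$ is concave there—is automatically locally Lipschitz on $I_{r_0}$: concave functions on an interval are locally Lipschitz in the interior (their one-sided derivatives exist everywhere, are monotone, and are locally bounded), and adding the smooth function $Cr^2$ preserves this. This gives the first assertion with essentially no extra work beyond citing the structure of concave functions; for any $J_{r_0} \subset\subset I_{r_0}$ the Lipschitz constant $L$ can be taken to be the supremum of $|\Iloc'|$ over $J_{r_0}$ together with the contribution $2C\sup_{J_{r_0}}|r|$, all finite.

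Next, for the supergradient claim, fix $r \in J_{r_0}$ and recall from Lemma \ref{LocalVariation} (applied to the minimizer $E_r$, which by \eqref{NotSaturated} is a genuine volume-constrained \emph{local} perimeter minimizer) that there is a smooth function $\phi_r$ defined near $r$ with $\phi_r(s) \geq \Iloc(s)$ on a neighborhood of $r$, $\phi_r(r) = \Iloc(r)$, and $\phi_r'(r) = \kappa_{E_r}(n-1)$. These three facts say precisely that the affine function $s \mapsto \Iloc(r) + \kappa_{E_r}(n-1)(s-r)$ lies above $\Iloc$ locally: indeed $\Iloc(s) \leq \phi_r(s) = \Iloc(r) + \kappa_{E_r}(n-1)(s-r) + O((s-r)^2)$. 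To promote this from a local one-sided touching inequality to the genuine supergradient inequality $\Iloc(s) \leq \Iloc(r) + \kappa_{E_r}(n-1)(s-r)$ for all $s \in I_{r_0}$, I would invoke the semi-concavity of $\Iloc$: a semi-concave function agrees with its (nonempty) supergradient at every interior point, and any value $p$ such that $\Iloc(s) \leq \Iloc(r) + p(s-r) + o(|s-r|)$ as $s \to r$ must in fact be a supergradient of the concave function $\Iloc(\cdot) - C(\cdot)^2$ shifted appropriately—more directly, since $\Iloc - Cr^2$ is concave, its supergradient at $r$ is the single interval $[D^+(r), D^-(r)]$ of one-sided derivatives, and the touching function $\phi_r$ forces $\phi_r'(r)$ into that interval, hence $\kappa_{E_r}(n-1) - 2Cr$ is a supergradient of $\Iloc - Cr^2$ at $r$, i.e.\ $\kappa_{E_r}(n-1)$ is a supergradient of $\Iloc$ at $r$.

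Finally, the bound \eqref{curvaturesBounded} follows immediately: every supergradient $p$ of an $L$-Lipschitz function on $J_{r_0}$ satisfies $|p| \leq L$ (take $s \to r^{\pm}$ in $\Iloc(s) \leq \Iloc(r) + p(s-r)$ and use $|\Iloc(s) - \Iloc(r)| \leq L|s-r|$ to get $p \leq L$ and $-p \leq L$), so $|\kappa_{E_r}(n-1)| \leq L$ and hence $|\kappa_{E_r}| \leq L$ after absorbing the dimensional factor $(n-1)$ into the constant. I expect the only delicate point to be the bookkeeping in the second step—carefully passing from the \emph{local} comparison furnished by $\phi_r$ (valid only on $(r-\delta_r, r+\delta_r)$) to a \emph{global} supergradient statement on $I_{r_0}$—but semi-concavity is exactly the tool that bridges this gap, since for concave functions a local supporting line is automatically a global one on the interval of concavity.
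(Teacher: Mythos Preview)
Your proposal is correct and follows essentially the same approach as the paper: deduce local Lipschitz continuity from semi-concavity (concave plus smooth), use Lemma~\ref{LocalVariation} applied to $E_r$ to obtain the touching function $\phi_r$ with $\phi_r'(r)=(n-1)\kappa_{E_r}$ identifying the supergradient, and then bound supergradients of a Lipschitz function by the Lipschitz constant. One small remark: since $n\geq 2$, the inequality $|\kappa_{E_r}(n-1)|\leq L$ already gives $|\kappa_{E_r}|\leq L$ directly, so no absorption of the factor $(n-1)$ into $L$ is needed.
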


\begin{proof}
Thanks to \eqref{NotSaturated} in Lemma 4.3, for any $r \in I_{r_0}$ there exists a volume--constrained local perimeter minimizer $E_r$ such that 
\[
\Iloc(r) = P(E_r; \Omega), \ \LL(E_r)= r, \ \alpha(E_r, E_0) < \delta.
\]
By Lemma \ref{LocalVariation} applied to $E_r$, in particular from \eqref{Eqn:Touching}, we have that $\kappa_{E_r}(n-1)$ belongs to the supergradient of $\Iloc$. From \eqref{semiConc} we know that the mapping $r \mapsto \Iloc(r) - Cr^2$ is concave, and hence  locally Lipschitz. In turn,  $\Iloc$ is locally Lipschitz in $I_{r_0}$. Finally, as $\kappa_{E_r}(n-1)$ is in the supergradient of a locally Lipschitz function, there exists a constant $L>0$ so that \eqref{curvaturesBounded} holds on $J_{r_0}$ (see Theorem 9.13 in \cite{Rockafellar}).
\end{proof}

%
%\begin{figure}[h]
%\centering{
%\input{VariationOfE.pdf_tex} %\resizebox{75mm}{!}{}
%\caption{Variation of $E$ inside $B_1$ and resulting set $F$.}
%\label{fig1}
%}
%\end{figure}
%
%\begin{figure}[h]
%\centering{
%\input{variationOfEk.pdf_tex} %\resizebox{75mm}{!}{}
%\caption{Variation of $E_k$ inside $B_1$.}
%\label{fig2}
%}
%\end{figure}

\iffalse
In what follows we shall need the notion of second variation of the perimeter functional at $E_0$. In particular, it is well--known (see e.g. \cite{JulinPisante}, \cite{SternbergZumbrun} and \cite{AcerbiFuscoMorini}) that the second variation of the perimeter along vector fields can be associated with a quadratic form $\partial^2 P (E_0; \Omega)[\varphi, \eta]$ defined over all 
\[
\varphi, \eta \in H^1(\overline{\partial E_0 \cap \Omega}).
\]
Volume preserving variations are identified with the subspace $V \subset H^1(\overline{\partial E_0 \cap \Omega})$ of functions satisfying
\[
\int_{\overline{\partial E_0 \cap \Omega}} \varphi \,d\HH^{n-1} = 0,
\]
and we will say that $E_0$ has positive second variation if
\beq \label{secondVariation}
\partial^2 P(E_0; \Omega)[\varphi, \varphi] > 0,  \text{ for all } \varphi \in V.
\eeq
\fi

Recently stability estimates have been proved for a nonlocal version of the perimeter functional by Acerbi, Fusco and Morini \cite{AcerbiFuscoMorini}. We recall the generalization of their result obtained by Julin and Pisante (see Theorem 1.1 in \cite{JulinPisante}), which will turn out to be a key tool for our analysis.

\begin{theorem} \label{JPthm}
Suppose that $\Omega$ satisfies \eqref{dom} and that $E_0$ is a mass--constrained local perimeter minimizer with strictly positive second variation in the sense of \eqref{secondVariation}. Then $E_0$ is a strict local minimum for $P(\cdot; \Omega)$ in the $L^1$ sense, and there exist $c > 0$ and $\delta_0 > 0$ such that
\beq \label{JPestimate}
P(E; \Omega) \geq P(E_0; \Omega) + c \LL(E \Delta E_0)^2
\eeq
for every set $E$ of finite perimeter in $\Omega$ satisfying $\LL(E) = \LL(E_0)$ and $\LL(E \Delta E_0) < \delta_0$.
\end{theorem}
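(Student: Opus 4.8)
This is Theorem 1.1 of \cite{JulinPisante}, which extends the quantitative perimeter-stability estimate of Acerbi, Fusco and Morini \cite{AcerbiFuscoMorini} to the relative-perimeter setting; I will therefore not reprove it, but record the plan, as it clarifies the role of hypothesis \eqref{dom}. The strategy has two parts: reduce \eqref{JPestimate} to a \emph{local} inequality, valid only for competitors $E$ whose boundary is a small normal graph over $\partial E_0$, and then derive that local inequality from a Taylor expansion of the perimeter combined with the coercivity of the second variation on the volume-preserving subspace.

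\textbf{Step 1: penalization and regularity reduction.} First I would remove the volume constraint by penalizing it: for $\Lambda$ large, any minimizer of $E\mapsto P(E;\Omega)+\Lambda\bigl|\LL(E)-\LL(E_0)\bigr|$ among sets with $\LL(E\Delta E_0)<\delta_0$ automatically satisfies $\LL(E)=\LL(E_0)$, so it suffices to bound this penalized energy below by $P(E_0;\Omega)+c\,\LL(E\Delta E_0)^2$. Each penalized minimizer is a $\Lambda$-minimizer of $P(\cdot;\Omega)$, and this is where smoothness of $\partial\Omega$ enters: by the interior and boundary regularity theory for such minimizers (valid for $n\le 7$; see \cite{GMT83}, \cite{Gruter}, \cite{MaggiBook}) together with the Euler--Lagrange equation for the mean curvature, $\partial E$ is of class $C^{1,\alpha}$ up to $\partial\Omega$, meets $\partial\Omega$ orthogonally, and is $C^{1,\alpha}$-close to $\partial E_0$ once $\LL(E\Delta E_0)$ is small. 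Thus $\partial E=\{x+u(x)\nu_{E_0}(x):x\in\partial E_0\}$ for a function $u$ with $\|u\|_{C^1(\partial E_0)}$ small, which elliptic estimates bootstrap to smallness of $\|u\|_{W^{2,p}(\partial E_0)}$; moreover $\LL(E\Delta E_0)$ is comparable to $\|u\|_{L^1(\partial E_0)}$, hence at most $C\|u\|_{L^2(\partial E_0)}$.

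\textbf{Step 2: Taylor expansion and coercivity.} Writing $E_u$ for the set bounded by the $u$-graph, expand
\[
P(E_u;\Omega)\ \ge\ P(E_0;\Omega)+\partial P(E_0)[u]+\tfrac12\,\partial^2 P(E_0)[u,u]-\omega\bigl(\|u\|_{C^1}\bigr)\,\|u\|_{H^1(\partial E_0)}^2 ,
\]
with $\omega(t)\to 0$ as $t\to 0$, where $\partial^2 P(E_0)[u,u]$ is the second variation of Theorem \ref{SecondVarThm} (with the contact boundary term of Lemma \ref{LocalVariation}). Since $E_0$ is a volume-constrained local perimeter minimizer, $\partial E_0$ has constant mean curvature and meets $\partial\Omega$ orthogonally (Lemma \ref{LocalVariation}), so the first-order term $\partial P(E_0)[u]$ is a constant multiple of $\int_{\partial E_0}u\,d\HH^{n-1}$; combined with the volume constraint, which forces $\int_{\partial E_0}u\,d\HH^{n-1}=O(\|u\|_{L^2}^2)$, it is absorbed into the error. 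By \eqref{secondVariation}, $\partial^2 P(E_0)[v,v]>0$ for every $v$ in the mean-zero subspace $V$, and since $\partial^2 P(E_0)[v,v]$ equals $\int_{\partial E_0}|\nabla_{E_0}v|^2\,d\HH^{n-1}$ minus terms that are compact perturbations on $H^1(\partial E_0)$, a compactness argument upgrades this to a uniform gap $\partial^2 P(E_0)[v,v]\ge m_0\|v\|_{H^1(\partial E_0)}^2$ on $V$. Applying this to the mean-zero part of $u$ and again controlling the mean part by the volume constraint yields $P(E_u;\Omega)-P(E_0;\Omega)\ge c\|u\|_{H^1(\partial E_0)}^2\ge c\,\LL(E_u\Delta E_0)^2$ for $\delta_0$ small, which is \eqref{JPestimate}; strict $L^1$-local minimality of $E_0$ follows at once.

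\textbf{Where the difficulty lies.} The delicate step is the regularity reduction: upgrading mere $L^1$-closeness of a competitor to $C^1$- and then $W^{2,p}$-closeness. This needs the full regularity theory for volume-penalized perimeter minimizers \emph{up to} $\partial\Omega$ --- the orthogonal contact condition and the absence of singularities of $\partial E$ for $n\le 7$ --- which is exactly what forces $\partial\Omega$ to be smooth in \eqref{dom}. A secondary subtlety is the nontrivial kernel of the second variation (the constant-in-$u$, translation-type directions): one must use the volume constraint and the orthogonality at $\partial E_0\cap\partial\Omega$ to restrict $u$ to a subspace on which the quadratic form is strictly coercive, with a gap independent of $E$.
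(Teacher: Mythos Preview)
Your approach matches the paper's: the paper does not prove this theorem but simply quotes it as Theorem~1.1 of \cite{JulinPisante} (generalizing \cite{AcerbiFuscoMorini}), together with a remark that a mass--constrained local perimeter minimizer automatically satisfies their ``regular critical set'' hypothesis. Your additional sketch of the penalization/regularity reduction and second--variation coercivity argument is consistent with the Julin--Pisante strategy and usefully highlights why the $C^{4,\sigma}$ regularity in \eqref{dom} is relevant, which the paper only flags in passing.
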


\begin{remark}
The original version of Theorem 1.1 in \cite{JulinPisante} requires the set $E_0$ in the statement to be a ``regular critical'' set of the perimeter functional (see Definition 2.1 in \cite{JulinPisante}). In essence, they require the set $E_0$ to be such that the first variation of $P(\cdot, \Omega)$ is zero in the direction of every admissible vector field of class $C^1$. We notice that this condition is always satisfied when $E_0$ is a mass--constrained local perimeter minimizer.
\end{remark}

We are now ready to give the proof of Theorem \ref{RegularitySecondVariation}.

\begin{proof}[Proof of Theorem \ref{RegularitySecondVariation}]
The proof will be divided into several steps, and we will invoke the previous results and the stability estimate \eqref{JPestimate} proved by Julin and Pisante \cite{JulinPisante}. By Theorem \ref{JPthm} we know that $E_0$ is an isolated local volume-constrained perimeter minimizer, and hence the unique minimizer of the problem
\beq \label{MinPro}
\min \left\{ P(E;\Omega): \, E\subset \Omega \text{ Borel, } \LL(E) = r, \, \alpha(E,E_0) \leq \delta \right\},
\eeq
for $r = r_0$ and for some fixed $0<\delta<\delta_0$ small enough, where $\delta_0$ is given in \eqref{JPestimate}.

Let $I$ be a neighborhood of $r_0$ (to be fixed later) and consider a sequence $\{ r_k \}$ satisfying $r_k \to r_0$ as $k \to \infty$. Let $ E_{r_k} $  be a minimizer of the problem \eqref{MinPro} for $r = r_k$. \\

\noindent {\bf Step 1.} By considering level sets of the signed distance function (see, e.g. Lemma 5.4 in \cite{LeoniMurray} or \cite{OleksivPesin}), and recalling the definition of $\Iloc$, it is straightforward to show that
\beq \label{uniformBound}
\Iloc \leq C
\eeq
for some $C > 0$ and, in turn, by $BV$ compactness, there exists a subsequence of $\{ E_{r_k} \}$ (not relabeled) such that
\beq \label{convL1}
E_{r_k} \to E^* \text{ in $L^1(\Omega)$ as } k \to \infty,
\eeq
for some measurable set $E^*$ such that $\chi_{E^*} \in BV(\Omega)$ and $\LL(E^*) = r_0$.

We notice that since $\alpha(E^*,E_0) \leq \delta$ and $\LL(E^*) = r_0$, by lower semi-continuity of the perimeter (see \cite{EvansGariepy}), and Remark \ref{Remark:USC}, we have that
\begin{align*}
P(E^*;\Omega) &\leq \liminf_{k \to \infty} P(E_{r_k};\Omega) = \liminf_{k \to \infty} \Iloc(r_k) \leq \limsup_{k \to \infty} \Iloc(r_k) \\
&\leq \Iloc(r_0) = P(E_0;\Omega) \leq P(E^*;\Omega).
\end{align*}
By uniqueness of \eqref{MinPro} for $r = r_0$, $E^* = E_0$, and so \eqref{convL1} reads
\beq \label{convE0}
E_{r_k} \to E_0 \text{ in $L^1(\Omega)$ as } k \to \infty.
\eeq
Thanks to \eqref{convE0}, we obtain
\[
\alpha(E_{r_k},E_0) < \delta,
\]
for $k$ big enough. In turn, this implies that there exists an open neighborhood $I_{r_0}$ of $r_0$ as in Lemma \ref{Ryan}. By Corollary \ref{Lip}, we have that $\Iloc$ is locally Lipschitz in $I_{r_0}$. \\

\noindent {\bf Step 2.} Fix an open neighborhood $J_{r_0} := (r_0 -R, r_0 + R) \subset \subset I_{r_0}$ of $r_0$, and let $L$ be the associated Lipschitz constant of $\Iloc$ in $J_{r_0}$ (see Corollary \ref{Lip}). Let $k$ be large enough so that $r_k \in J_{r_0}$. Let $\x_0 \in \Omega$, $\rho_0 > 0$. We claim that $E_{r_k}$ is a $(\Lambda, \rho_0)$--perimeter minimizer (see e.g. \cite{MaggiBook}), that is
\beq \label{lambda}
P(E_{r_k}; B_{\rho}(\x_0)) \leq P(E; B_{\rho}(\x_0)) + \Lambda \LL(E_{r_k} \Delta E),
\eeq
for all $\rho < \rho_0$ and all measurable $E$ satisfying
\begin{equation}\label{eqn:CompactlyContained}
E_{r_k} \Delta E \subset \subset B_{\rho}(\x_0),
\end{equation}
and with 
\[
\Lambda = \max \left\{L, \frac{2C}{\delta}, \frac{2C}{R} \right \},
\] 
where $C > 0$ is as in Step 1. Because of \eqref{eqn:CompactlyContained}, we know that $P(E_{r_k}; B_{\rho}(\x_0)) - P(E; B_{\rho}(\x_0)) = P(E_{r_k}; \Omega) - P(E; \Omega)$, and thus it suffices to prove that
\beq \label{lambda2}
P(E_{r_k}; \Omega) \leq P(E; \Omega) + \Lambda \LL(E_{r_k} \Delta E).
\eeq

We divide the proof of \eqref{lambda2} into three cases. If 
\[
\alpha(E_0, E) \leq \delta \text{ and } \LL(E) \in J_{r_0},
\]
then by our choice of $L$ (see Corollary \ref{Lip}), we have
\[
\begin{aligned}
P(E_{r_k}; \Omega) &= \Iloc(E_{r_k})  \leq \Iloc(\LL(E)) + L\left| \LL(E_{r_k}) - \LL(E))  \right| \\
&\leq P(E; \Omega) + L \left| \LL(E_{r_k}) - \LL(E))  \right| \\
&\leq P(E; \Omega) + L \LL(E_{r_k} \Delta E),
\end{aligned}
\]
and \eqref{lambda2} is proved in this case. \\

If instead $E$ is such that
\[
\alpha(E_0, E) > \delta,
\]
then by \eqref{convE0},
\beq \label{newEqRef}
\LL(E_{r_k} \Delta E) \geq  \LL(E_0 \Delta E) - \LL(E_{r_k} \Delta E_0) \geq \frac{\delta}{2},
\eeq
for $k$ sufficiently large. Moreover, by \eqref{uniformBound} and \eqref{newEqRef},
\beq \label{boundLocal}
P(E_{r_k}; \Omega) \leq C \leq \frac{2C}{\delta} \LL(E_{r_k} \Delta E) \leq \frac{2C}{\delta} \LL(E_{r_k} \Delta E) + P(E;\Omega), 
\eeq
so that \eqref{lambda2} follows from our choice of $\Lambda$. \\

Finally, if
\[
\LL(E) \notin J_{r_0},
\]
then for $r_k \in (r_0 - R/2, r_0 + R/2)$ we have that
\[
\LL(E_{r_k} \Delta E) \geq \frac{R}{2},
\]
and so \eqref{lambda2} follows as in the previous case. \\

\noindent {\bf Step 3.} Fix $\z_0 \in \Omega \cap \partial E_0$, and choose $\mathfrak r > 0$ such that $B_{\mathfrak r}(\z_0) \subset \subset \Omega$ and
\[
\partial E_0 \cap B_{\mathfrak r}(\z_0) = {\rm graph}(u_0),
\]
for some regular function $u_0$. By the theory of $(\Lambda, \rho_0)$ minimizers (see Theorem 26.6 in \cite{MaggiBook}), choosing $\rho_0$ smaller if needed, it follows that for any sequence of points $\z_k \in \partial E_{r_k}$ such that $\z_k \to \z_0 \in \Omega \cap \partial E_0$, then for $k$ large enough $\z_k \in \Omega \cap \partial^*E_{r_k}$ and
\beq \label{normals}
\lim_{k \to \infty} \nu_{E_{r_k}}(\z_k) = \nu_{E_0}(\z_0),
\eeq
uniformly on $B_{\mathfrak r}(\z_0)$. In turn, by \eqref{convE0}, for $k$ big enough
\beq \label{graphUK}
\partial E_{r_k} \cap B_{\mathfrak r}(\z_0) = \mbox{graph}(u_k),
\eeq
for some functions $u_k$. In particular, by equation (26.52) in \cite{MaggiBook}, we obtain
\beq \label{holderGradients}
\nabla u_k \to \nabla u_0, \text{ in } C^{0,\gamma}(\Omega),
\eeq
for all $\gamma \in (0,1/2)$. \\

\noindent {\bf Step 4.} Since $\partial E_{r_k}$ is a surface of constant mean curvature, $u_k$ solves
\[
{\rm div} \left( \frac{\nabla u_k}{\sqrt{1 + |\nabla u_k|^2}} \right) = \kappa_k \ \text{ in } B_{\mathfrak r}(\z_0), 
\] 
where $\kappa_k$ is the mean curvature of $\partial E_{r_k}$. By standard Schauder estimates (see e.g. \cite{GT}) and \eqref{normals}, it follows that
\beq \label{Apriori}
||u_k ||_{C^{2,\gamma}(B'_{\mathfrak r / 2}(\z_0) )} \leq c_1| \kappa_k | \leq C,
\eeq 
where $B'_{\mathfrak r / 2}(\z_0)$ is the $(n-1)$--dimensional ball and the uniform bound on the curvatures comes from Corollary \ref{Lip}. \\

\noindent {\bf Step 5.} By Rellich--Kondrachov compactness theorem and by a bootstrapping argument on \eqref{Apriori}, we deduce that there exists a subsequence of $\{r_k\}$, not relabeled, and $\tilde u \in W^{m,2}(B'_{\mathfrak r / 2}(\z_0))$ such that
\beq \label{RellichConvergence}
u_{r_j} \to \tilde u \text{ in } W^{m,2}(B'_{\mathfrak r / 2}(\z_0))
\eeq
for all $m > 0$. It follows from \eqref{convE0}, that necessarily $\tilde u = u_0$. \\

\begin{figure}[h]
\centering{
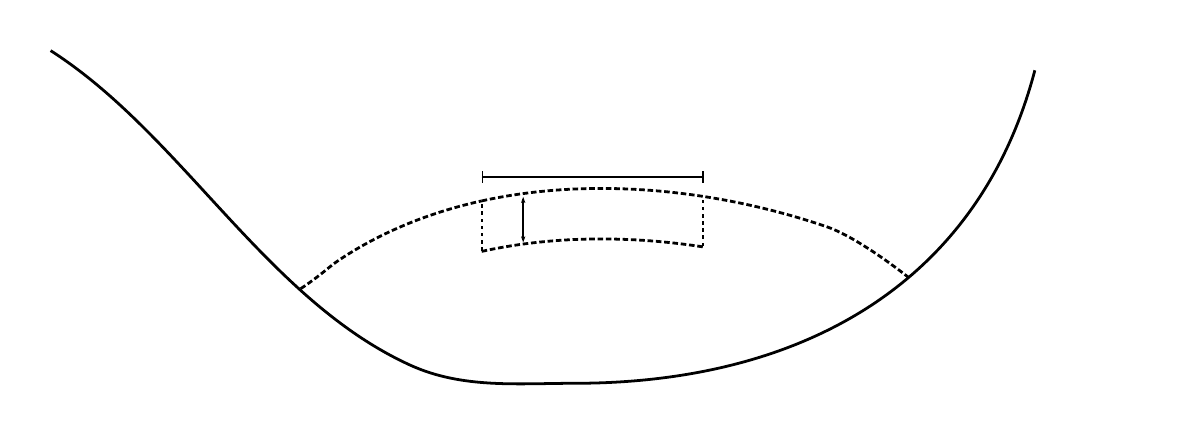 %\resizebox{75mm}{!}{}
\caption{Mass fixing perturbation of $E_{r_k}$ from Step 6.}
\label{fig1}
}
\end{figure}

\noindent {\bf Step 6.} Define
\begin{equation}\label{Def:Deltak}
\delta_k := (r_0-r_k) \left( \frac{\mathfrak{r}}{2}\right)^{1-n} \omega_{n-1}^{-1},
\end{equation}
and let
\[
\tilde u_{r_k} = \begin{cases} u_{r_k} + \delta_k &\text{ on } B'_{\mathfrak r / 2}(\z_0) \\ u_{r_k} &\text{ on } B'_{\mathfrak r }(\z_0) \setminus B'_{\mathfrak r / 2}(\z_0).  \end{cases}
\]
Let $\tilde E_{r_k}$ be the subgraph of $u_{r_k}$ (inside a cylinder with base $B'_{\mathfrak r }(\z_0)$, and equal to $E_{r_k}$ otherwise), and notice that $\LL (\tilde E_{r_k}) = \LL(E_0)$ by our choice of $\delta_k$. Moreover, we have that
\beq \label{61}
P(\tilde E_{r_k}; \Omega) = P(E_{r_k}; \Omega) + c_n\left( \frac{\mathfrak{r}}{2}\right)^{n-2} \delta_k = P(E_{r_k}; \Omega) + O(|r_k - r_0|),
\eeq
where $c_n$ is the surface area of the $n-1$ dimensional unit ball, and where we have used \eqref{Def:Deltak}.
Furthermore, it follows from Corollary \ref{Lip} that 
\beq \label{62}
P(E_{r_k}; \Omega) = P(E_0; \Omega) + O(|r_k-r_0|). 
\eeq
By \eqref{JPestimate}, together with \eqref{61}, \eqref{62}, we infer that
\[
\LL(\tilde E_{r_k} \Delta E_0) \leq \sqrt{ P(\tilde E_{r_k}; \Omega) - P(E_0; \Omega)} \leq O(|r_k-r_0|^{1/2}).
\]
Moreover, by the triangle inequality, we have
\[
\LL(E_{r_k} \Delta E_0) \leq  \LL(E_0 \Delta \tilde E_{r_k}) +  \LL(E_{r_k} \Delta \tilde E_{r_k}) \leq O(|r_k-r_0|^{1/2}) + O(|r_k-r_0|),
\]
where the first term is estimated above while the second one follows by the construction of the $\tilde E_{r_k}$. In turn,
\beq \label{JP1}
\LL(E_{r_k}\Delta E_0) \leq O(|r_k-r_0|^{1/2})
\eeq
and
\beq \label{JP2}
\begin{aligned}
|\kappa_{r_k} - \kappa_0| &\leq C||D^2u_{r_k} - D^2 u_0||_{L^2(B'_{\mathfrak r / 2}(\z_0) )} \\
&\leq C||u_{r_k} -  u_0||^{1-\beta}_{L^1(B'_{\mathfrak r / 2}(\z_0) )} ||u - u_{r_k}||^\beta_{W^{m,2}(B'_{\mathfrak r / 2}(\z_0) )} + C||u_{r_k} -  u_0||_{L^1(B'_{\mathfrak r / 2}(\z_0) )} \\
&= C \LL(E_{r_k} \Delta E_0)^{1-\beta} ||u - u_{r_k}||^\beta_{W^{m,2}(B'_{\mathfrak r / 2}(\z_0) )} + C\LL(E_{r_k} \Delta E_0),
\end{aligned}
\eeq
for $m>2$ and for some $\beta \in (0,1)$, where we have used the fact that $\partial E_{r_k}$ are surfaces of constant mean curvature, Nirenberg's interpolation inequality (see \cite{Nirenberg59}, p. 125-126) and \eqref{graphUK}.

Hence, \eqref{RellichConvergence}, \eqref{JP1} and \eqref{JP2} imply that
\beq \label{curvatureEstimate}
|\kappa_{r_k} - \kappa_0 | = O(|r_k-r_0|^{(1-\beta)/2}).
\eeq
Since $(n-1)\kappa_{E_r}$ belongs to the supergradient of $\Iloc$ at $r \in I_{r_0}$ (see Lemma \ref{Lip}), at any point $s$ where $\Iloc$ is differentiable we have that
\[
\frac{d \Iloc(r)}{dr} \Big\vert_{r=s} = (n-1)\kappa_{E_s}.
\] 
Since $\Iloc$ is locally Lipschitz in $I_{r_0}$ (see Step 1), we apply the fundamental theorem of calculus for $r \geq r_0$, $r \in I_{r_0}$ to obtain
\[
\begin{aligned}
\left| \Iloc(r) - \Iloc(r_0) - (r - r_0) \kappa_{r_0}(n-1)   \right| &\leq (n-1) \int_{r_0}^r |\kappa_s - \kappa_{r_0} | ds \\
&\leq C  \int_{r_0}^r |s - r_0 |^{(1-\beta)/2} ds
\end{aligned}
\]
where in the last inequality we have used \eqref{curvatureEstimate}, and \eqref{TaylorExpansion2} follows. The case $r \leq r_0$ is analogous.
\end{proof}

\section{Acknowledgements}
This paper forms a portion of the Ph.D. theses of both authors, at Carnegie Mellon University. The authors would like to thank Irene Fonseca and Giovanni Leoni for careful readings of the manuscript and the Center for Nonlinear Analysis at Carnegie Mellon University. They would also like to thank Nicola Fusco and Massimiliano Morini for helpful discussions on the subject of this paper. The first author's research was partially supported by the awards NSF PIRE Grant No. OISE-0967140, DMS 1211161 and DMS 0905723, while the second author was partially supported by the awards DMS 0905778 and DMS 1412095.

\newpage
\bibliographystyle{acm} % al posto di alpha puoi mettere lo stile che preferisci.
\bibliography{SlowMotion}
\end{document}